\documentclass{amsart}
\usepackage{amsmath}
\usepackage{graphics,graphicx,psfrag, bbm}
\usepackage[applemac]{inputenc}
 
\setlength{\oddsidemargin}{.5cm} 
\setlength{\evensidemargin}{.5cm}
\setlength{\textwidth}{14.3cm} 
\setlength{\textheight}{20cm}
\setlength{\topmargin}{1cm}

\newcommand{\vip}{\vskip0.15cm}

\newcommand{\indiq}{{\rm 1 \hskip-4pt 1}}

\newcommand{\nn}{\mathbb{N}}
\newcommand{\zz}{\mathbb{Z}}
\newcommand{\rr}{{\mathbb{R}}}

\newcommand{\cI}{{\mathcal I}}
\newcommand{\cJ}{{\mathcal G}}

\newcommand{\cS}{{\mathcal S}}
\newcommand{\cA}{{\mathcal A}}

\newcommand{\cC}{{\mathcal C}}
\newcommand{\cO}{{\mathcal O}}
\newcommand{\cP}{{\mathcal P}}
\newcommand{\cR}{{\mathcal R}}
\newcommand{\cZ}{{\mathcal Z}}


\newcommand{\adm}{\cA}
\newcommand{\conf}{\cC}
\newcommand{\cercle}{{\mathbb{S}^1}}
\newcommand{\tore}{{\mathbb{T}^2}}

\newcommand{\carre}{\Box}
\newcommand{\bfz}{{\bf z}}
\newcommand{\bfx}{{\bf x}}
\newcommand{\bfh}{{\bf h}}

\newcommand{\bill}{{\Phi}}

\usepackage{marvosym,fancybox}


\newtheorem{theorem}{\indent Th\'eor\`eme}[section]
\newtheorem{proposition}[theorem]{\indent Proposition}
\newtheorem{conjecture}[theorem]{\indent Conjecture}
\newtheorem{remark}[theorem]{\indent Remarque}
\newtheorem{lemma}[theorem]{\indent Lemme}
\newtheorem{definition}[theorem]{\indent D\'efinition}
\newtheorem{cor}[theorem]{\indent Corollaire}

\renewenvironment{proof}{\vip \noindent {\it Preuve. }}{\hfill$\square$ \vip}









\begin{document}

\title{Casse-Briques}\author{Xavier Bressaud, Marie-Claire Fournier}
\date{\today}

\begin{abstract}Cet article propose une version mathématique du jeu éponyme. Son étude systématique se révèle d'une complexité surprenante. Après un survol des propriétés générales du modèle, nous étudions plus précisément un cas très particulier puis mettons en évidence les difficultés combinatoires qui apparaissent dans un autre. 
\end{abstract}

\maketitle


\section{Introduction}

Nous nous proposons d'étudier un modèle mathématique inspiré de l'un des premiers  jeux video publiés, le casse-briques --- en anglais, {\it breakout} --- apparu dans les années 70.  Sur la partie supérieure de l'écran, un mur de briques ; en bas, un palet mobile sur un axe horizontal~; entre les deux, une bille (carrée, sur les premières versions) effectuant des allers-retours entre le palet sur lequel elle peut rebondir s'il est bien positionné  et les briques du mur sur lesquelles elle rebondit aussi, mais en {\it effaçant} à chaque impact la brique heurtée. Petit à petit, les briques disparaissent.  L'enjeu pour le joueur est de placer correctement le palet pour assurer le rebond : si la bille atteint le bas de l'écran, la partie est perdue.  Le jeu (ou du moins le tableau) s'arrête lorsque toutes les briques ont été atteintes et donc effacées. Dans les versions élaborées, le joueur peut avoir une influence sur l'angle avec lequel la bille rebondit sur le palet en impulsant une vitesse, en donnant une orientation. Bien des variantes ont suivi : disposition des briques, formes des briques, briques spéciales, multiplication des billes, obstacles inamovibles, qui pourront nous emmener plus loin. 


\begin{figure}
\includegraphics[width=60mm]{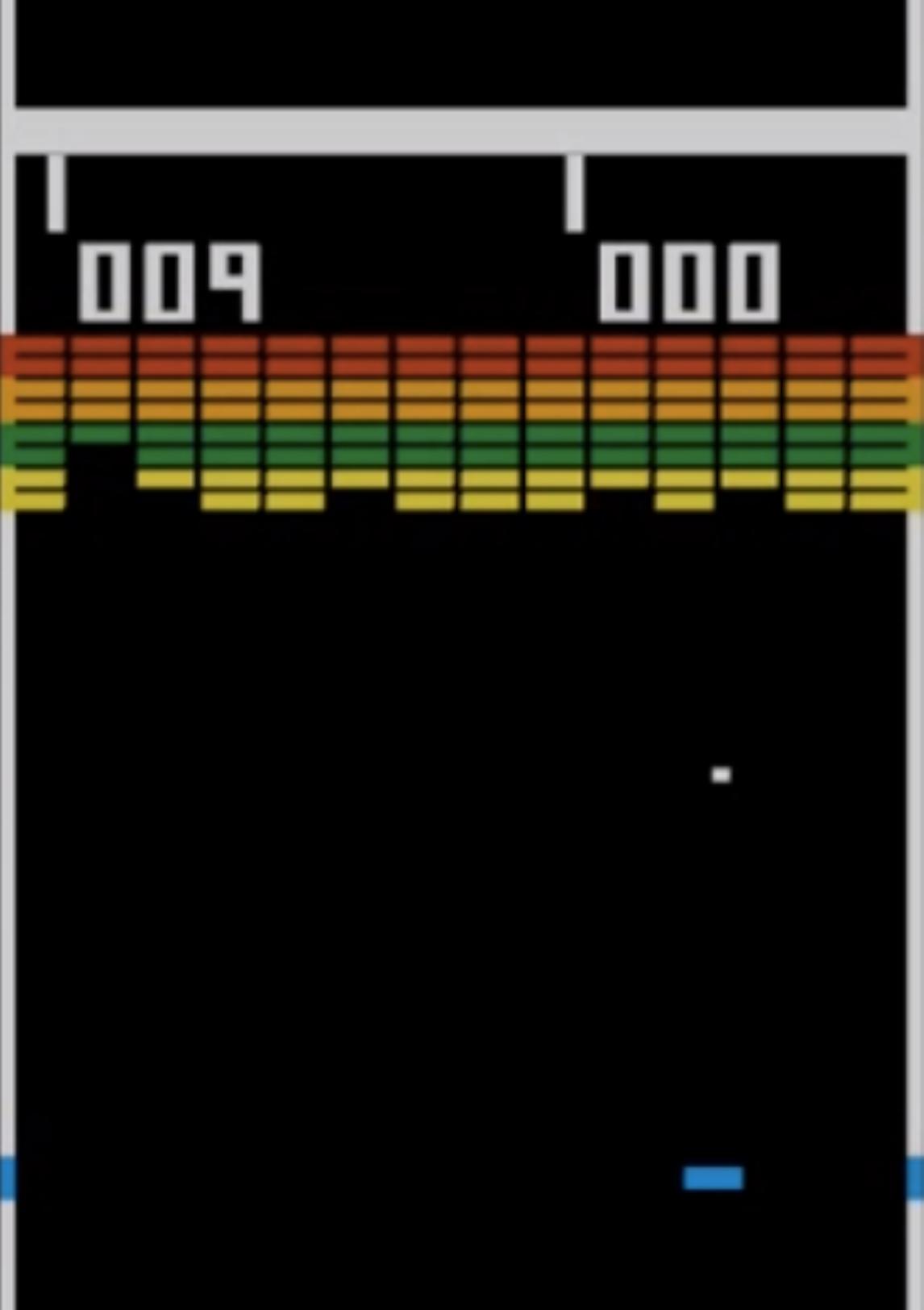}

  \caption{Une version  du jeu original {\it breakout}. D'après {\sc Wikipédia} : "{\it Breakout est initialement publié comme borne d'arcade en 1976. Celle-ci possède un écran monochrome sur lequel sont collées des bandes de plastique transparent colorées à l'emplacement des briques afin de les faire apparaître en couleur.
La borne d'arcade dessinée par Nolan Bushnell a été conçue à la base par Steve Jobs et Steve Wozniak, les fondateurs ultérieurs d'Apple. [...] 
Le jeu d'arcade fut un succès. En 1978, la version arcade originale de Breakout a été officiellement portée sur plusieurs systèmes, telles la Video Pinball, l'Atari 5200 et l'Atari 2600. Le jeu a connu depuis un nombre élevé de clones, le plus connu étant probablement Arkanoid de Taito.}" }
  \label{fig:breakout}
\end{figure}
Nous ne souhaitons pas nous préoccuper des mouvements du joueur ; supposons qu'il place toujours le palet au bon endroit et ne perde pas. Quel est alors le mouvement de la bille~? Combien de temps lui faut-il pour détruire toutes les briques ? Creuse-t-elle en priorité certaines directions ? Peut-il arriver que la bille se perde dans le mur, détruisant beaucoup de briques sans redescendre ? C'est en cherchant à formaliser ce type de questions que nous avons défini le modèle proposé. Naturellement, l'objet mathématique obtenu se révèle plus riche que le système modélisé  et nous finissons par l'étudier pour lui-même.

\vip

L'objet mathématique que nous allons présenter comme modèle du casse-briques est à penser comme une version élaborée d'un billard mathématique dont les bords changent lorsque la bille heurte certains bords. On se donne un domaine (polygonal mais pas nécessairement borné, éventuellement le plan tout entier) du plan dans lequel sont placés des obstacles (polygonaux, compacts). Un point matériel évolue à vitesse constante dans ce domaine (privé des obstacles) jusqu'à ce qu'il en atteigne la frontière ; si cette frontière est un bord du domaine, alors il rebondit (comme dans un billard classique) ; si c'est le bord d'un obstacle, il rebondit suivant les même règles, {\it mais} l'obstacle disparait. Le point matériel continue donc d'évoluer, mais maintenant dans un billard plus grand. 

\vip

Ce système dynamique n'est en général pas conservatif : le nombre d'obstacles ne peut que décroître. Il n'est donc pas question pour  l'étudier d'utiliser directement les outils standards de la théorie ergodique. Une autre particularité de ce système est que le flot ainsi défini n'est pas bijectif ; c'est un semi-flot. En effet, si on essaye de revenir en arrière après avoir heurté une brique, deux alternatives se présentent ; soit la brique (qu'on vient d'heurter et de faire disparaitre) était présente ; soit elle n'était pas présente et dans ce cas, la bille venait de traverser la case en question (et n'a pas rebondi). Ainsi, à certaines traversées de coté du pavage initial, on a (localement) deux préimages possibles. Réciproquement, il y a des états admissibles sans passé : la bille ne peut-être issue d'un obstacle encore présent.

\vip

Nous nous concentrons ici sur deux exemples pour lesquels les obstacles sont des carrés unité. 
malgré la simplicité de ces exemples, nous nous heurtons très vite à d'importantes difficultés combinatoires qui nous ont dissuadé de chercher à aborder le problème en toute généralité. Mais l'introduction de ce système ouvre des perspectives plus larges. Aussi nous commençons  par donner une définition formelle un peu générale d'un tel système et nous assurons qu'il satisfait quelques proprietés élémentaires. Nous nous consacrons ensuite à l'étude du casse-briques dans un domaine évoquant celui du jeu (avec un palet couvrant toute la largeur) mais qui ne serait pas borné en hauteur : nous nous demandons à quelle vitesse le mur peut être détruit. Pour certaines valeurs des paramètres, nous parvenons à nous ramener à l'étude de systèmes dynamiques classiques, nous mettant en position d'utiliser des résultats standards sur les billards ou les isométries par morceaux pour obtenir des résultats précis. Enfin, nous nous efforçons de donner une idée de la complexité combinatoire du casse-briques  dans un domaine plan  en présentant les quelques exemples que nous avons pu comprendre.

\subsection{Définition et existence}
Un {\em domaine} est une partie polygonale de $\rr^2$, disons une intersection (finie) de demi plans. Un {\em obstacle} est un domaine compact.  On se donne un domaine $\Delta$ (pas necessairement compact, possiblement $\rr^2$ tout entier) et une famille  d'obstacles $\cO = \{B_z ; z \in \cI\}$, indexée par un ensemble $\cI$ dénombrable, essentiellement disjoints (ne s'intersectant que sur leur frontière). Pour simplifier, nous supposerons que les mesures des angles et des cotés des polygones sont uniformément minorés.  On définit de manière classique le flot $(\Phi^{\cO}_t)_{t \in \rr}$ du billard dans $\Delta^\cO = \overline{\Delta \setminus \cup_{B \in \cO} B}$. Il s'agit d'un billard polygonal, éventuellement non borné.  C'est une famille d'applications du fibré unitaire tangent  $\Phi^{\cO}_t :T\Delta^\cO = \Delta^\cO \times \cercle \to T \Delta^\cO$ formant essentiellement un groupe.  Il convient de préciser ce qui se passe lorsqu'un point appartient à la frontière de $\Delta^\cO$~: on peut imposer que le vecteur vitesse (unitaire) pointe vers l'interieur du domaine ($T^+\Delta^\cO$). Les trajectoires du billard sont  continues à droite et limitées à gauche.

On appelle {\em configuration}, notée $\eta$, tout $\eta \in \{0,1\}^\cI$ qu'on peut voir comme une partie de l'ensemble des indices $\cI$  dénombrant $\cO$.  A une configuration $\eta$ on associe  une partie de $\cO$ en posant $\cO(\eta)  = \{B_z ; z \in \cI, \eta(z) =1\}$~; nous appelerons parfois abusivement cette liste aussi configuration.  
Etant donné un obstacle $P \in \cO(\eta)$, on note $\eta^{P}$ la configuration $\eta$ privée de l'obstacle $P$~:  $\cO(\eta^{P}) = \cO(\eta) \setminus\{P\}$. On définit l'ensemble des états admissibles comme la réunion sur l'ensemble des configurations $\eta$ des ensembles $T^+\Delta^\eta$ des états admissibles  pour le billard sur $\Delta^\eta$. Etant donnés une configuration $\eta$,  un point et une vitesse $(x,v) \in T^+\Delta^\eta$, on définit $\tau$ comme le premier instant où la bille rencontre le bord du domaine (et donc rebondit), 
$$\tau =  \tau^{\eta}(x,v) = \inf\{ t \geq 0 \, : \, \Phi^{\eta}_t(x,v) \in  \cup_{P \in \cO(\eta)} P\}$$
et on note $\beta = \beta^\eta(x,v)$ l'obstacle auquel appartient $\Phi^{\cO(\eta)}_\tau(x,v)$.    On définit maintenant un semi-flot $(\Phi_t)_{t \in \rr}$ sur $\Delta  \times \cercle \times \cP(\cO)$ en posant~:   
$$\Phi_t(x,v,\eta) = \left\{ \begin{array}{ll} (\Phi^\eta_t(x,v),\eta) & \hbox{ si } 0 \leq t <  \tau^\eta(x,v)\\  (\Phi^\eta_{\tau}(x,v),\eta^\beta) & \hbox{ si } t=\tau^\eta(x,v).\end{array} \right. $$
Cela définit bien un semi-flot pour tout $t>0$. En effet, on peut prolonger toutes les trajectoires exceptées celles qui atteignent une {\it singularité} (i.e. l'un des sommets de l'un des polygones). Remarquons que la dynamique ne dépend  que de la configuration dans un voisinage du point. L'ensemble des singularités est dénombrable. L'analyse du billard classique montre que l'on peut définir le flot jusqu'au rebond. Si la bille n'atteint pas une singularité, le rebond est bien défini. L'état après le rebond est bien une configuration admissible (puisqu'elle est dans un billard plus grand). Plus globalement on peut définir l'ensemble des trajectoires indéfiniment prolongeables ({\it régulières}) et s'assurer qu'elles forment un ensemble de mesure pleine. Des hypothèses géométriques simples permettent d'assurer que, même  si le temps entre les rebonds n'est pas uniformement  minoré, le nombre de rebonds par unité de temps reste uniformément borné. Ces résultats sont  formalisés dans la section~\ref{sec:definition}.


\subsection{Vitesse de fuite pour un domaine restreint}
Fixons un entier $K>0$. Nous allons  nous intéresser au casse-briques dans des domaines très particuliers : pour $\bfh\geq 0$,  
$$\Delta_{\bfh} = \{ \bfx \in \rr^2, \,  0\leq x_1 \leq K , \, x_2 \geq -\bfh \}. $$

Soit $\carre=[0,1]\times [0,1] \subset \rr^2$ le carré unité. On note $\carre_{\bf z} =  \carre + (z_1, z_2) $ les obstacles --- que nous appellerons ici {\em briques}.  On note $\cZ_K = \{0, 1, \ldots, K-1\} \times \zz_+$ et $\cC_K \subset \{0,1\}^{\cZ_K}$ l'ensemble des configurations contenant un nombre fini de $0$. On considère la famille d'obstacles $\cO = \{ \carre_{{\bf z}}, {\bf z} \in \cZ_K\}$.  Un élément $\eta  \in \cC_K$ décrit une famille d'obstacles $\cO(\eta) :=  \{ \carre_{{\bf z}}, {\bfz} \in \cZ_K,  \eta(\bfz)=1 \} \subset \cO$. 
\vip

Le casse-briques $\Phi := \Phi^{\bfh}$ est le casse-briques défini dans le domaine $\Delta^\cO_\bfh$ muni de la famille d'obstacles $\cO$. L'ensemble des positions initiales admissibles est naturellement  $\Delta_\bfh^{\cO}$, essentiellement réduit à une bande de largeur $K$ et de hauteur $\bfh$ qu'on munit de la mesure de Lebesgue.  Nous nous contenterons de considérer des positions initiales de la bille   dans l'intervalle $[0,K] \times \{-\bfh\}$ et donc d'angles dans $(0,\pi)$.

\begin{figure}
\includegraphics[width=60mm]{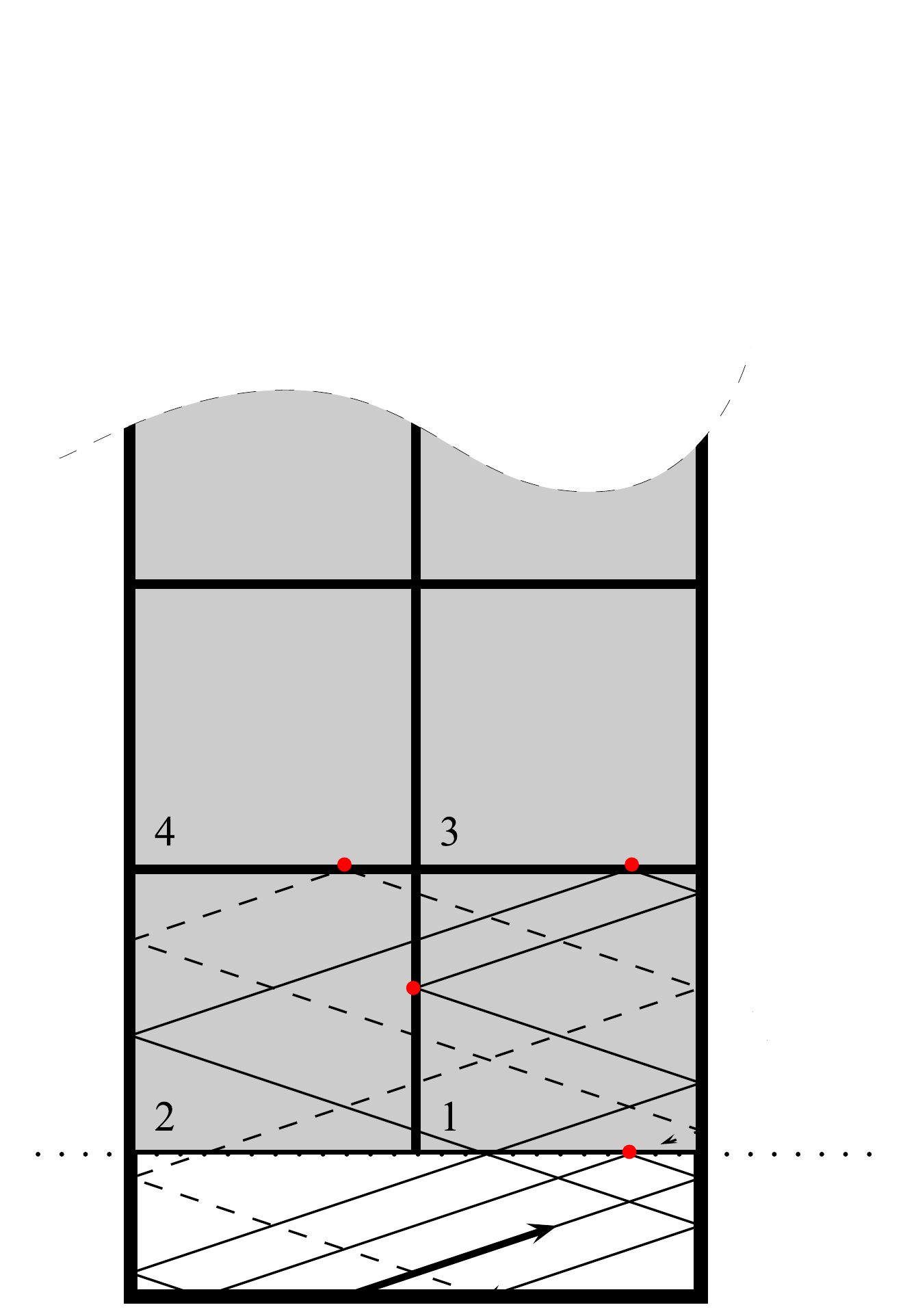}

  \caption{Début d'une trajectoire pour $K=2$, $\bfh = 1/2$ et $\tan \theta_0 = 1/3$. Les numéros dans les briques indiquent l'ordre dans lequel elles sont heurtées par la bille et donc disparaissent. Les deux premiers allers et retours sont en traits pleins ; le suivant en pointillés. }
  \label{fig:}
\end{figure}

\vip

Pour une configuration $\eta$, on définit la hauteur 
$$H(\eta) := \inf{\{ z_2 ; \bfz=(z_1,z_2)  \in \cZ_K, \eta(\bfz) =1 \}}$$ 
de la (ou des)  brique(s)  la(es)  plus basse(s) encore présente(s) dans la configuration $ \eta$ et on note $H(\bfx,\eta,\theta)  := H(\eta)$. 
Notre résultat le plus précis décrit la {\em vitesse de fuite} du casse-brique, c'est-à-dire le comportement asymptotique de $H(\Phi_t(\bfx_0, \eta_0, \theta_0))$, pour des valeurs très particulières des paramètres~:  
\begin{theorem}
\label{th:vitesse}
Fixons $K=2$. Soient $\theta_0$ avec $\tan{ \theta_0} =  1/4$, $\eta_0$ bien remplie et $\bfh< 1/10$. Alors, pour Lebesgue presque tout $\bfx_0 \in \Delta_\bfh^{\cO}$,  
\begin{equation}
\lim_{t \to \infty} \frac{H(\Phi^\bfh_t(\bfx_0, \eta_0, \theta_0))}{\sqrt{t}} = \sqrt{(1-4\bfh)/\sqrt{2}}
\end{equation}
\end{theorem}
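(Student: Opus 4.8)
The strategy is to reduce the dynamics, after transients, to a piecewise isometry / interval-exchange-like map whose "height coordinate" grows in a controlled, essentially deterministic way. Let me think about the geometry. $K=2$, slope $1/4$, so the ball travels along lines of slope $\pm 1/4$; over one horizontal strip of width $2$ it gains height $1/2$. Since $\bfh < 1/10$ the reflecting floor is very close to the bottom row of bricks, so the "return map" to the floor $[0,2]\times\{-\bfh\}$ is well-defined and after one bounce off the floor the ball goes up, hits (at most) one brick per row, deletes it, and comes back down. The key observation I would establish first: with slope $1/4$ and width $K=2$, the trajectory between two consecutive floor bounces is a zig-zag that rises, hits some brick, and the reflection off a horizontal brick-edge sends it back down symmetrically; I would check that with $\bfh<1/10$ a vertical brick-edge is never hit (the horizontal displacement per unit height, namely $4$, combined with the geometry of unit squares and the smallness of $\bfh$, forces impacts on horizontal faces only — this is where the hypothesis $\bfh<1/10$, and more precisely $1-4\bfh>0$, enters).

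First I would set up the \emph{first-return map} $F$ to the floor segment $I=[0,2]$ (identifying the position by its first coordinate $x_1$, the direction being determined up to the sign which alternates). I would show $F$ is a piecewise isometry of $I$ (in fact, because slope is fixed, $F$ is essentially $x\mapsto x+c \pmod{\text{something}}$ on pieces, i.e.\ a rotation or an interval exchange on the relevant sub-intervals), with a finite number of continuity intervals, and simultaneously track the combinatorial data "which brick got deleted" as a function of $x_1$ and of the current configuration $\eta$. The point is that deleting the lowest brick along the ball's path changes $\eta$ but does \emph{not} change the floor-return map $F$ itself (the return geometry only sees the floor and the first obstacle hit), so the orbit of $x_1$ under $F$ is autonomous. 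Then $H(\eta_t)$ increases precisely when the current lowest row has been fully cleared; since each floor excursion clears exactly one brick in some row, and there are $K=2$ bricks per row, I would count the number of excursions needed to clear row after row.

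The heart of the argument is then an \textbf{equidistribution / ergodic} statement for $F$ on $I$: for Lebesgue-a.e.\ $x_0$, the $F$-orbit equidistributes (or at least: visits each of the two columns with asymptotic frequency $1/2$ in a way that makes consecutive rows get cleared at an asymptotically steady rate of $2$ excursions per row). If $F$ turns out to be an irrational rotation this is classical; if it is a more general interval exchange / piecewise isometry, I would invoke the standard results on unique ergodicity or on a.e. well-distribution for the relevant parameter (slope $1/4$ gives a specific rotation number that I would compute explicitly and check is irrational), which the paper says it intends to use. Granting this, the number $N(n)$ of floor excursions needed to reach height $H=n$ satisfies $N(n)\sim 2n$ (two bricks per row), while the \emph{time} consumed by $N$ excursions is $\sim N\cdot(\text{mean excursion time})$. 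The mean time of one excursion: the ball goes up from height $-\bfh$ to roughly the brick it clears at height $\approx H$ and back, covering vertical extent $\approx 2(H+\bfh)$ at vertical speed $\sin\theta_0 = 1/\sqrt{17}$... — wait, I must be careful with the normalization of speed. With $\tan\theta_0=1/4$ the unit-speed vertical component is $1/\sqrt{17}$ and horizontal component $4/\sqrt{17}$; one excursion up to height $H$ and back has arclength $\approx \sqrt{17}\cdot 2(H+\bfh)\cdot$(correction), but also the horizontal zig-zagging — actually by similar triangles the total path length of an excursion reaching height $y$ above the floor is $\sqrt{17}\, y /1 \cdot$(factor), so the time is proportional to the height reached. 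Summing $\sum_{n} (\text{time at row }n)\asymp \sum_n c\,n \asymp c\,H^2/2$, so $t\asymp H^2$, giving $H\asymp\sqrt t$; pinning the constant to $\sqrt{(1-4\bfh)/\sqrt2}$ is the bookkeeping that fixes all the $\sqrt{17}$'s and the factor $(1-4\bfh)$ coming from the true reflected-path length with the floor at $-\bfh$ (the $1-4\bfh$ is the effective "useful height gained per strip" once the floor offset is accounted for, and the $\sqrt2$ comes from $\sqrt{(1+\tan^2)/(1+\cdots)}$-type ratio at slope $1/4$... I'd recompute this cleanly).

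The \textbf{main obstacle} I anticipate is twofold. (1) Proving that the return map $F$ is \emph{exactly} a nice piecewise isometry for \emph{all} reachable configurations — i.e.\ that no degeneracy (a grazing hit on a vertical edge, or the ball threading between two bricks into a deeper pocket) ever occurs along a.e. orbit; this is precisely what $\bfh<1/10$ is engineered to prevent, and I'd prove it by a direct interval analysis showing the "bad" set of $x_1$ is finite (hence measure zero) and $F$-invariantly avoided. (2) Upgrading "column frequency $1/2$" to the precise \emph{row-clearing rate} needed: a priori the two bricks of a given row could be deleted out of order or one row's clearing could stall while orbits revisit already-empty rows, so I need a quantitative equidistribution estimate (a discrepancy bound for the rotation $F$) to conclude $N(n)=2n+o(n)$ and, crucially, that the height $H(\Phi_t)$ does not lag: $H$ at time $t$ equals the largest $n$ with cumulative excursion-time $\le t$, up to $o(\sqrt t)$. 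Feeding $N(n)=2n+o(n)$ and excursion-time$(n)= \alpha n + O(1)$ into the partial sums and taking $t\to\infty$ yields the stated limit; identifying $\alpha$ and simplifying $\sqrt{N(H)\cdot\text{stuff}}$ down to $\sqrt{(1-4\bfh)/\sqrt2}$ is then a deterministic computation that I would carry out at the end.
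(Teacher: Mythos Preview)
Your proposal contains two substantive errors that would make the argument fail as written.

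\textbf{Vertical impacts do occur.} You assert that with $\bfh<1/10$ a vertical brick edge is never hit, and that this is what $\bfh<1/10$ (or $1-4\bfh>0$) buys. This is false. For $K=2$ and $\tan\theta_0=1/4$ the relevant angular smallness condition is $\tan\theta_0<1/(K(K-1))=1/2$, and what it guarantees (Lemme~\ref{lem:petitangle} in the paper) is only that the configuration stays \emph{balanced}: $H^+-H\le 1$. Concretely, if the ball reaches height $H$ over an already-empty square, it enters the row, hits the remaining brick on a \emph{vertical} face, clears the row, rises to hit a horizontal face at height $H+1$, and only then descends. These case-(iii) excursions are ubiquitous in the limit dynamics; they are precisely the excursions that carry the extra horizontal displacement $\beta(\xi,\tilde k)$ in the paper's formula (\ref{expressionvarphi}). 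The bound $\bfh<1/10$ has nothing to do with avoiding vertical hits: it is the hypothesis under which the induced map in Lemme~\ref{lem:induite} is well defined (the interval arithmetic there needs $h<1/10$).

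\textbf{The floor return map is not autonomous.} Because vertical hits occur, the position and time of return depend on which bricks remain in the bottom row: if the ball arrives over an occupied square it bounces back immediately (horizontal shift $2h$, duration $2(H+\bfh)/\sin\theta_0$), whereas if it arrives over an empty square it takes the longer case-(iii) route (horizontal shift $2h+2\alpha+\beta$, duration $2(H+\bfh+1)/\sin\theta_0$). Hence your $F$ cannot be a map of $x_1$ alone; the paper instead carries the frontier configuration $\xi\in\{(1,1),(0,1),(1,0)\}$ as part of the state and obtains an interval-translation map $\varphi$ on three copies of the circle. The miracle specific to $\tan\theta_0=1/4$ is that $\alpha=1/(2K\tan\theta_0)=1$ is an integer, so the height variable $h$ is frozen; one then induces on a subinterval $G_h$ and checks by hand (Lemme~\ref{lem:induite}) that the induced map is a genuine circle rotation of angle $8h/(1-12h)$, irrational for irrational $h$, whence unique ergodicity. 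This is the step your plan is missing. Correspondingly, the constant $1-4\bfh$ is not ``useful height gained per strip'' but the Lebesgue measure, on the invariant set $\cJ_h$, of the event $\{\xi\ne(1,1)\}$ where $\Delta H=1$; and the counting is not ``two excursions per row'' but $H_n/n\to 1-4\bfh$ via Proposition~\ref{prop:fuite}, from which $H_t/\sqrt t$ follows by the Abel-type summation in that proposition.
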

Mais notre étude porte plus généralement sur la vitesse de fuite. Nous montrons d'abord, pour tout $K$ entier, que si $\theta_0$ est suffisamment petit, alors la brique la plus basse et le trou le plus haut restent essentiellement à la même hauteur. Dès lors, on montre que $1/K \leq H_t/\sqrt{t\sin{\theta_0}} \leq \sqrt{K}$. Mais surtout, une periodicité triviale permet alors de ramener le système à un système dynamique $\varphi$ sur un espace compact, et l'étude la vitesse de fuite à celle d'une somme ergodique au dessus de ce système.  La nature du système dynamique est particulièrement simple lorsque l'angle est rationnel : il s'agit d'une translation d'intervalle. Il est possible de l'expliciter complètement.  Elle est périodique pour des valeurs de $\theta_0$ et $\bfh$ rationnelles. Un calcul fini peut alors donner la vitesse de fuite en fonction de l'état initial. Lorsque $K=2$ et pour des valeurs de $\bfh$ irrationnelles suffisamment petites, nous menons l'étude jusqu'au bout  dans le cas où $\tan \theta_0 = 1/4$. Nous pensons que les idées développées peuvent  fonctionner pour d'autres paramètres, mais nous n'avons pas encore d'argument global.   Ces résultats sont démontrés dans la section~\ref{sec:restreint}.


\subsection{Régularité de certaines orbites pour le casse-briques dans le plan}
\begin{figure}
\includegraphics[width=60mm]{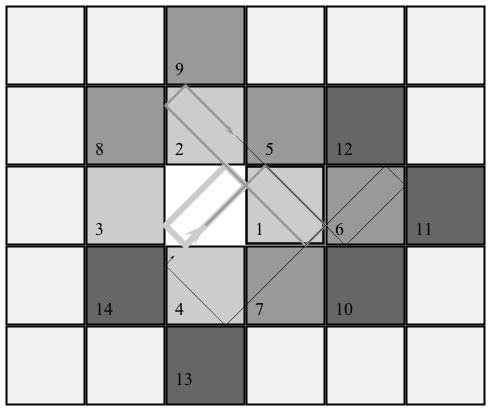}

  \caption{Exemple de trajectoire : l'angle initial satisfait $\tan \theta_0 = 1$ ; les numéros dans les briques indiquent l'ordre dans lequel elles sont heurtées par la bille (et donc disparaissent) ; le niveau de gris accentue cette information. La trajectoire est représentée par une ligne de plus en plus fine pour aider à distinguer les passages multiples. }
  \label{fig:zdeux1}
\end{figure}

\begin{figure}
\includegraphics[width=70mm]{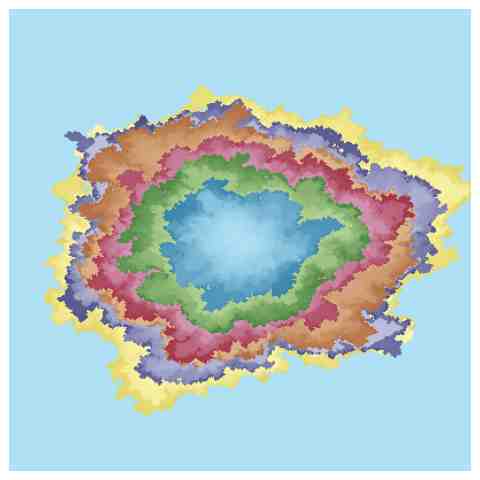}
\includegraphics[width=70mm]{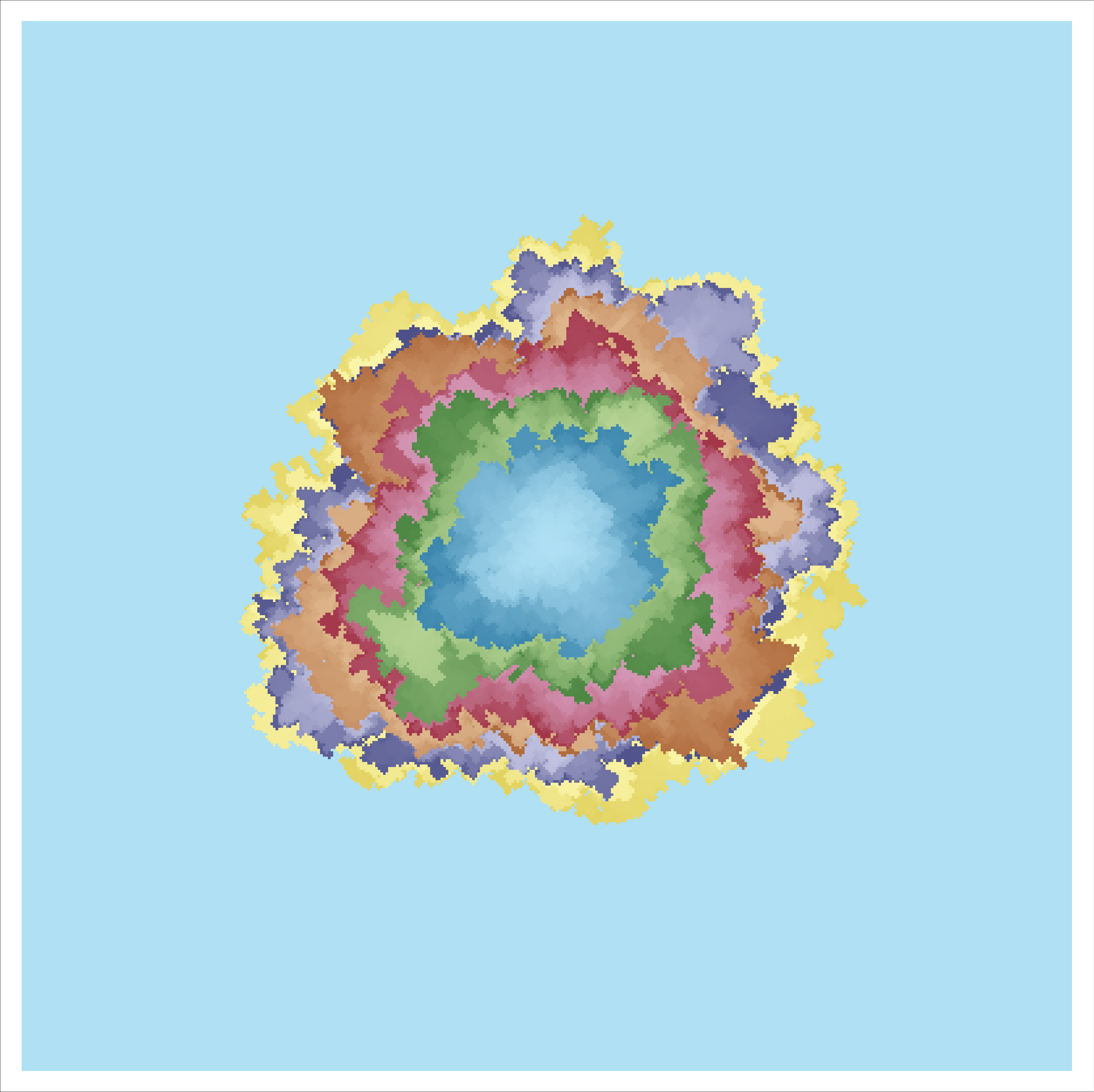}
\includegraphics[width=70mm]{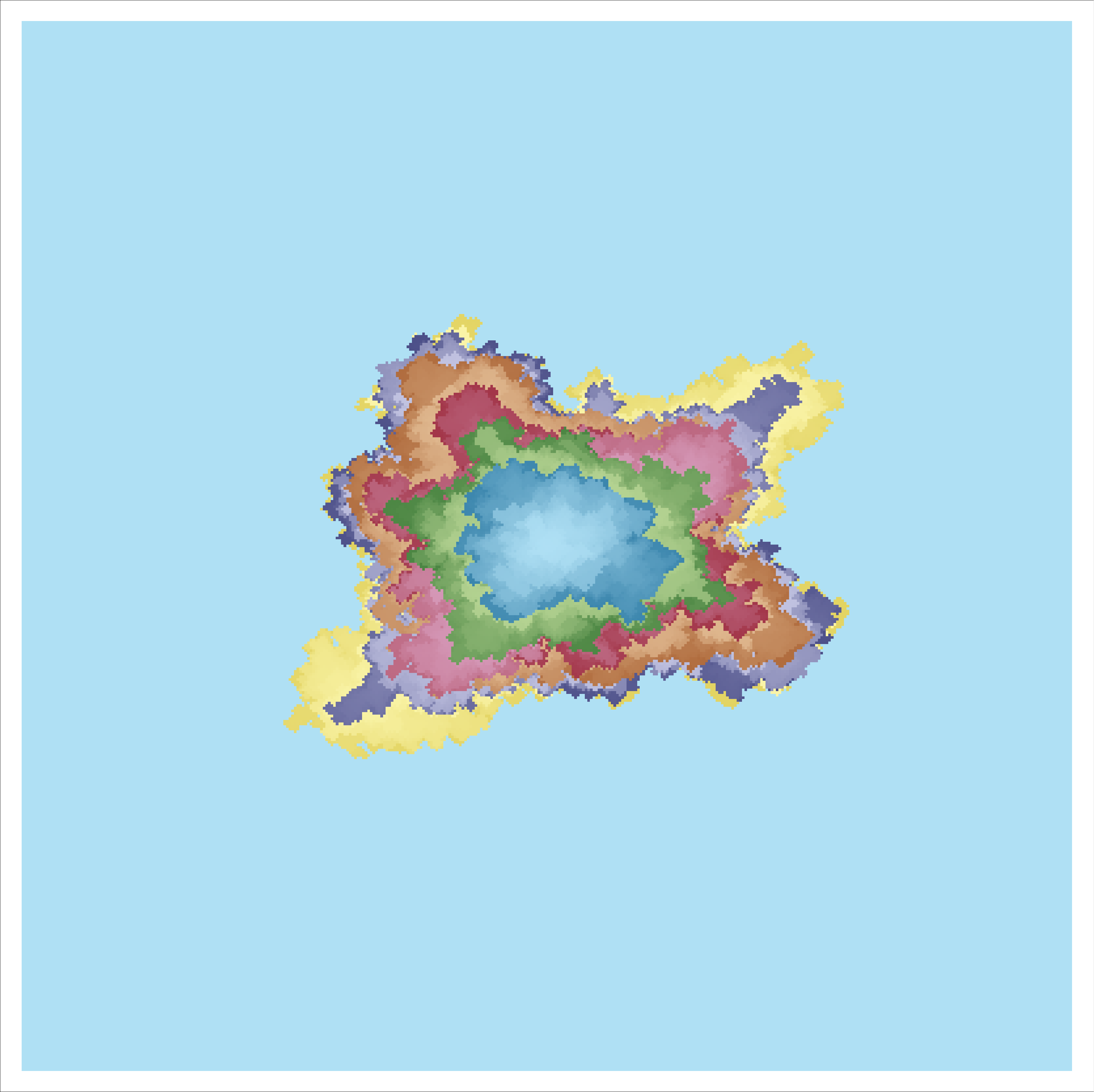}
\includegraphics[width=70mm]{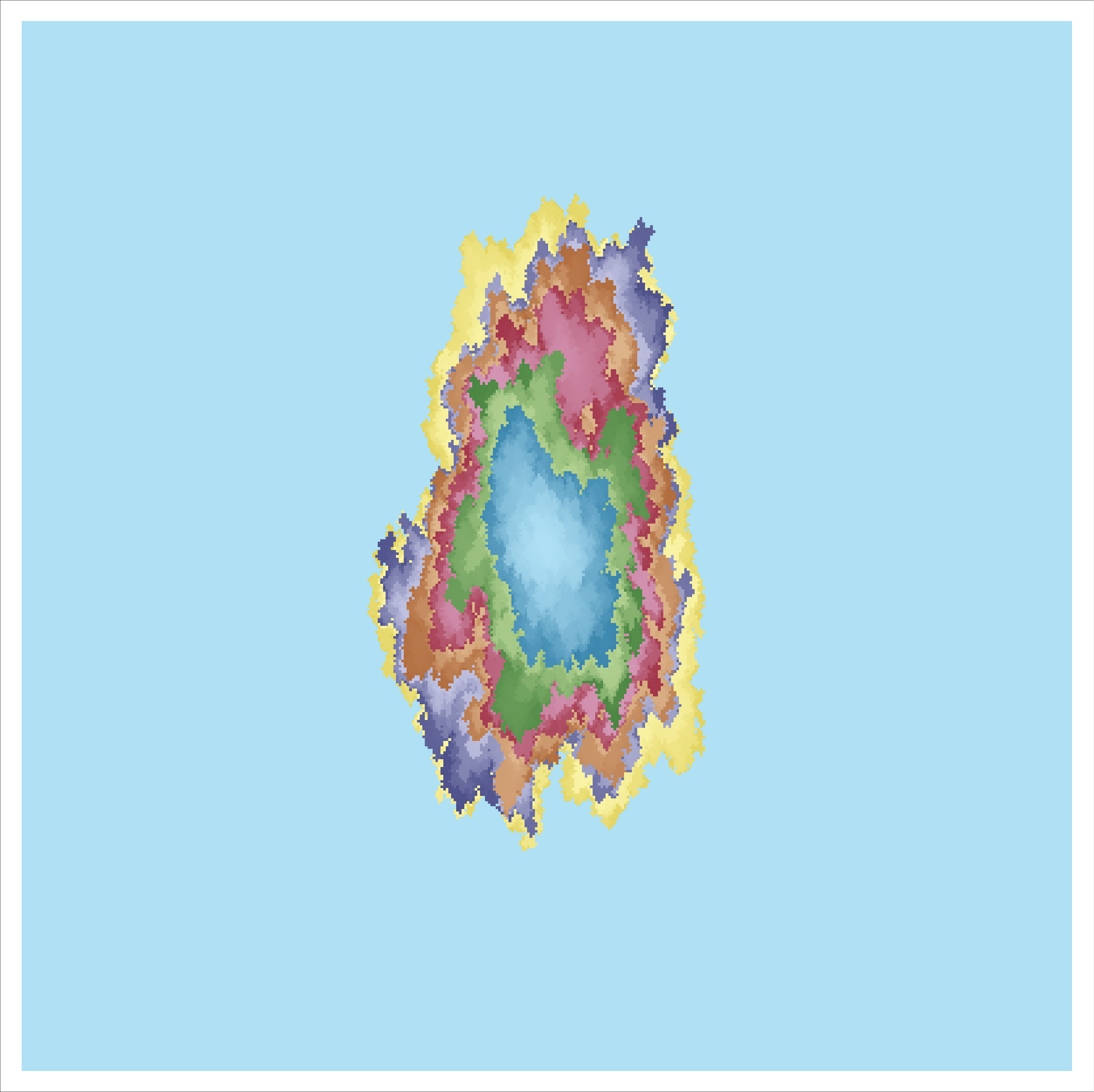}

  \caption{Ces figures sont obtenues en colorant les carrés de $\zz^2$ (ici des pixels) d'une couleur suivant une échelle donnée (bleu, vert, rouge, marron, violet, jaune) dans l'ordre ou les obstacles sont heurtés par la bille pour des trajectoires de longueur de l'ordre d'un million et d'angles respectifs : $\tan \theta_0 = 3/2$,  $\tan \theta_0 = 21/22$, $\theta_0 = 4 \pi /5$ et $\theta_0=3 \pi / 8$. }
  \label{fig:zdeux2}
\end{figure}

Notre première idée était d'étudier le casse-briques dans le plan rempli d'obstacles carrés : $\Delta = \rr^2$ et $\cO = \{ \carre_{{\bf z}}, {\bf z} \in \zz^2 \setminus \{(0,0) \} \}$. Nous nous sommes heurtés à des difficultés combinatoires inattendues : nous n'avons pu comprendre l'asymptotique de la dynamique que pour un groupe très particulier d'angles d'incidence : $\tan \theta = 0, 1, 2, 3$ ; le comportement du système pour les autres angles reste parfaitement mystérieux, comme l'illustrent les figures~\ref{fig:zdeux1} et \ref{fig:zdeux2}. Nous proposons,  dans la section~\ref{sec:zdeux}, un cadre permettant de décrire les régularités observées sur ces exemples, introduisant notamment l'idée d'orbites relativement périodiques ; mais les  résultats sont très partiels : nous apportons plus de questions que de réponses,  espérant que d'autres sauront aller plus loin. 

\section{Définition et existence du casse-brique}
\label{sec:definition}
\subsection{Isométries par morceaux}
Commençons par rappeler quelques définitions usuelles de systèmes dynamiques : une rotation d'angle $2\pi \alpha$ (du cercle unité) peut-être vue aussi comme une translation du tore $\mathbb{T}^1$ et vue sur l'intervalle $[0,1]$ (en identifiant $0$ et $1$) comme l'application $x \mapsto x + \alpha$ (modulo $1$). C'est alors un exemple d'échange d'intervalles (échange de deux intervalles), une classe plus vaste d'applications : soit $(I_1, \ldots, I_d)$ une partition de $[0, 1]$ en $d$ intervalles consécutifs de longueurs respectives $(\lambda_1, \ldots, \lambda_d)$. Si on choisit une famille $(t_1, \ldots, t_d)$ de réels entre $0$ et $1$, on peut définir une {\em translation d'intervalles} en considérant l'application translatant, pour chaque $i \in \{ 1, \ldots, d\}$  les points  de $I_i$ d'un vecteur $t_i$ (modulo 1). Si une telle application est bijective, on dit que c'est un {\em échange d'intervalles}. Il existe alors 
une permutation $\Pi$ sur $\{1, \ldots, d\}$ et une partition une partition  $(J_1, \ldots J_d)$ de $[0,1]$ en intervalles consécutifs  de longueurs respectives $(\lambda_{\Pi(1)}, \ldots, \lambda_{\Pi(d)})$ telles que l'échange d'intervalles  envoie l'intervalle $I_i$ sur l'intervalle $J_{\Pi^{-1}(i)}$ (qui est de même longueur). 

Etant donnée une application $\varphi$  (par exemple de l'intervalle dans lui même) et une partie $A$ de son ensemble de définition, on appelle application induite de $\varphi$ sur $A$ l'application définie sur $A$ par $\varphi_A(x) = \varphi^{\tau(x)}(x)$ où $\tau(x) = \inf{\{n>0 : \varphi^n(x) \in A \} }$ est le premier temps de retour de $x$ dans $A$. 

\vip

Les propriétés dynamiques de la rotation d'angle $2\pi \alpha$ dépendent de la rationalité de $\alpha$~: elle est périodique si et seulement si $\alpha$ est rationnel. Autrement toutes  ses orbites sont denses et, en fait celles-ci satisfont une propriété d'équipartition sur l'intervalle : ce système dynamique admet une unique mesure invariante (unique ergodicité), la mesure de Lebesgue, qui est donc ergodique ; cela assure un comportement raisonnable des moyennes ergodiques presque partout~; en fait, pour ces systèmes, la situation est encore plus favorable : le théorème ergodique est valable en tout point : pour un intervalle $A$, $1/n \sum_{k=0}^{n-1}  1_A(\varphi^k(x)) \to |A|$ uniformément. De tels résultats existent aussi pour les échanges d'intervalles sous certaines hypothèses. Mais les propriétés ergodiques sont plus délicates ; en particulier, l'ergodicité de la mesure de Lebesgue n'est pas garantie en général. Les translations d'intervalles ne sont pas bijectives~; pour se ramener à des bijections, on peut se placer sur leur ensemble limite ; mais plusieurs situations peuvent se produire ; l'ensemble limite peut être une réunion d'intervalles, mais aussi un ensemble de Cantor.  On pourra voir \cite{bos1} pour plus de détails sur ces systèmes. 


\subsection{Billards polygonaux}
On définit de manière classique le flot $(\bill^{\Delta}_t)_{t \in \rr}$ du billard dans un domaine polygonal $\Delta$. Il s'agit d'un billard polygonal, éventuellement non borné.  C'est une famille d'applications du fibré unitaire tangent  $\bill^{\Delta}_t :T\Delta = \Delta \times \cercle \to T \Delta$ formant un groupe. 

Soit $x \in \Delta$ et $v \in T_x\Delta$ tels quel  $\tau = \inf\{s>0 ; x+sv \in \partial \Delta\} >0$. Notons $x_\tau = x+\tau v \in \partial \Delta$, considérons le segment bordant $\Delta$ en $x_\tau$ et sa normale interieure $\bf n$ relevée dans $T_{x_\tau}\Delta$ (cela n'a de sens que si $x_\tau$ n'est pas une singularité du bord). Notons $v'$ le symétrique de $v$ par rapport à $\bf n$. On definit alors  $\bill^{\Delta}_t(x,v) = (x+tv,v)$ pour tout $t<\tau$ et $\bill^{\Delta}_\tau (x,v) = (x+\tau v, v')$. 

Pour que cette définition fasse sens il convient de se restreindre, pour les points de la frontière, à ceux dont le vecteur vitesse pointe vers l'interieur de $\Delta$. Enfin, il faut exclure les points  dont l'orbite rencontrerait une singularité du bord du domaine ; il est  possible  de montrer que ceux-ci forment un ensemble de mesure de Lebesgue  nulle. Ainsi le billard est défini sur un ensemble invariant de mesure pleine ; les trajectoires sont alors continues à droite et limitées à gauche. La famille d'applications $(\bill^{\Delta}_t)_{t\in \rr}$ définies sur cet ensemble forme un groupe : 
$\bill^{\Delta}_{t+s}= \bill^{\Delta}_t \circ \bill^{\Delta}_s$.  Observons enfin que lorsque les angles entre les cotés du billard sont rationnels, pour chaque angle initial, l'application de premier retour du flot sur les frontières du domaine est une isométrie par morceaux. Pour plus de détails et la preuve de résultats classiques que nous utiliserons de manière plus ou moins explicite, on pourra se référer à \cite{tab1}.

\subsection{Définition du casse-briques}

On se donne un domaine $\Delta$ (pas nécessairement compact, possiblement $\rr^2$ tout entier) et une famille dénombrable d'obstacles $\cO= \{B_z, z \in \cI\}$ d'interieurs disjoints. On supposera (pour simplifier) que toute boule n'intersecte qu'un nombre fini d'obstacles. On considère pour une famille d'obstacles $\cO$ l'ensemble $\Delta^\cO$ constitué du domaine $\Delta$ privé des obstacles :   $\Delta^\cO = \overline{\Delta \setminus \cup_{B \in \cO} B}$.

On appelle {\em configuration} notée $\eta$ un élément de $\{0,1\}^\cI$. Nous nous limiterons à l'ensemble des  configurations, dites {\em bien remplies}, 
$$\cC = \{\eta \in \{0,1\}^\cI ; \sum_{z \in \cI} (1-\eta(z)) < \infty\}.$$

A une configuration $\eta$ on associe  une partie de $\cO$ en posant $\cO(\eta)  = \{B_z ; z \in \cI, \eta(z) =1\}$  plus petite que $\cO$  mais ne différant de $\cO$ que par un nombre fini d'éléments (nous appelerons parfois abusivement cette liste aussi configuration). On munit les ensembles  $\Delta^\eta := \Delta^{\cO(\eta)}$ de la mesure de Lebesgue. Etant donné un obstacle $P \in \cO(\eta)$, on note $\eta^{P}$ la configuration associée à la famille d'obstacles $\cO(\eta)$ privée de l'obstacle $P$~:  $\cO(\eta^{P}) = \cO(\eta) \setminus\{P\}$. 

\vip

Un état du système est décrit par un triplet composé d'une {\em position} $\bfx \in \Delta$, d'une {\em configuration} $\eta \in \cC$  et d'un {\it angle} $ \theta \in  \cercle \simeq [0,2\pi)$ caractérisant un vecteur {\em vitesse}  $v(\theta) = (\cos \theta, \sin \theta)$ unitaire. Un état $(\bfx,\eta,\theta)$ du système est dit {\em admissible} si $\bfx$ est dans  $\Delta^\eta$, et, pour les points de la frontière, si le vecteur vitesse pointe vers l'intérieur du domaine (strictement), c'est-à-dire si il existe $\epsilon >0$ tel que pour tous $0 < s < \epsilon$, $\bfx + s v(\theta)$ est dans l'interieur de $\Delta^\eta$. On peut interpréter le couple $(\bfx, v(\theta))$,  pour chaque configuration $\eta$, comme un élément du fibré unitaire tangent $T \Delta^\eta$ de $\Delta^\eta$. Plus formellement,  
on définit, pour $\eta \in \cC$ et $\bfx \in \Delta^\eta$, 
$$T^+_\bfx \Delta^\eta = \{ v \in \cercle : \exists \epsilon >0, \, \forall 0 < s < \epsilon, \, \bfx + s v \in \hbox{int}(\Delta^\eta) \}  \subset T_\bfx \Delta^\eta. $$
Cela nous permet de définir l'ensemble des états admissibles comme : 
$${\cA } = \left\{ (\bfx, \eta, \theta) :  \eta \in \cC, \bfx \in \Delta^\eta,  v(\theta) \in T^+_\bfx \Delta^\eta) \right\}. $$
Pour chaque configuration $\eta$, l'espace fibré unitaire tangent  $T \Delta^\eta$ est naturellement muni de la mesure de Lebesgue. Ainsi, par restriction, on a une version de la mesure de Lebesgue sur chaque coordonnée de $\cA$. Cela fournit une mesure sur la réunion dénombrable (sur l'ensemble des configurations bien remplies) que constitue $\cA$ et que nous noterons $\lambda$. 

\vip

Etant donnés une configuration $\eta$,  un point $\bfx$ et un angle $\theta$  tels que $(\bfx,v(\theta)) \in T^+\Delta^\eta$, la trajectoire $(\bill^\eta_t(\bfx,v(\theta)))_{t \geq 0}$ du billard dans $\Delta^\eta$ est définie tant qu'elle ne rencontre pas une singularité, i.e. un point du bord $\partial\Delta^\eta$ où la normale n'est pas définie. Il peut arriver un moment où elle rencontre un obstacle : on note $\tau$  le premier instant où la bille rencontre un obstacle et $\beta$ l'obstacle rencontré,  
$$\tau =  \tau(\bfx,\eta,\theta) = \inf\{ t \geq 0 \, : \, \bill^{\eta}_t(\bfx,v(\theta)) \in  \cup_{z \in \cI,  \eta(z)=1} B_z \times \cercle\},$$
et
$$ \beta(\bfx,\eta,\theta) = B_{\zeta}, \hbox{ où } \zeta \hbox{ est l'unique indice } \zeta \in \cI \hbox{ tel que } \bill^\eta_\tau(\bfx,v(\theta)) \in B_\zeta  \times \mathbb{S}_1.$$ 
On définit, lorsque c'est possible, un semi-flot $(\Phi_t)_{t \in \rr}$ sur $\cA$ en posant 
$$\Phi_t(\bfx, \theta, \eta) = \left\{ \begin{array}{ll} (\bill^\eta_t(\bfx,v(\theta)),\eta) & \hbox{ si } 0 \leq t <  \tau  \\  (\bill^\eta_{\tau}(\bfx,v(\theta)),\eta^{\beta(\bfx,\eta,\theta)}) & \hbox{ si } t=\tau.\end{array} \right. $$
Cela n'a de sens que si $\beta(\bfx,\eta,\theta)$ est uniquement défini et donc si $\bill^\eta_\tau(\bfx,v(\theta))$ n'appartient qu'à un seul obstacle, et si le rebond est bien défini pour le billard, c'est-à-dire si $\bill^\eta_\tau(\bfx,v(\theta))$ n'est pas une singularité du bord de  $\Delta^\eta$.

\subsection{Trajectoires infinies}
Nous allons voir que l'ensemble des états initiaux pour lesquels la trajectoire peut être prolongée indéfiniment est de mesure de Lebesgue pleine. 
La définition des  configurations admissibles garantit que le début de la  trajectoire est bien défini si on part d'un état admissible puisque c'est vrai pour le billard : 
\begin{lemma}[Existence locale]
\label{existenceloc}
Soit $(\bfx, \eta, \theta) \in \cA$. Il existe $t_0 >0$ tel que pour tout $0\leq t \leq  t_0$, $\Phi_t(\bfx, \eta, \theta)$ soit bien défini. 
\end{lemma}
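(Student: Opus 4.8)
On veut établir l'existence locale du semi-flot $\Phi$ à partir d'un état admissible $(\bfx, \eta, \theta) \in \cA$. L'idée est que la dynamique du casse-briques coïncide, sur un petit intervalle de temps, avec celle du billard classique dans $\Delta^\eta$, configuration qui reste figée tant qu'aucun obstacle n'est heurté. Il suffit donc de montrer que le billard dans $\Delta^\eta$ est bien défini sur un petit intervalle de temps issu de $(\bfx, v(\theta))$, puis de vérifier que le premier temps de choc $\tau$ est strictement positif (ou $+\infty$), de sorte que la première branche de la définition de $\Phi_t$ s'applique sur $[0, t_0]$ pour $t_0$ assez petit.

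**Plan de la preuve.**

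Je commencerais par distinguer deux cas selon la position de $\bfx$. \emph{Premier cas :} $\bfx$ est dans l'intérieur de $\Delta^\eta$. Comme toute boule n'intersecte qu'un nombre fini d'obstacles et que $\Delta$ est une intersection finie de demi-plans, il existe $\e>0$ tel que la boule $B(\bfx,\e)$ est incluse dans l'intérieur de $\Delta^\eta$. Le segment $\{\bfx + s v(\theta) : 0 \le s < \e\}$ reste alors dans $\hbox{int}(\Delta^\eta)$, donc ne rencontre ni $\partial\Delta$ ni aucun obstacle ; ainsi $\tau \ge \e >0$ et $\bill^\eta_t(\bfx,v(\theta)) = (\bfx + t v(\theta), \eta)$ est bien défini pour $t < \e$, ce qui donne $\Phi_t(\bfx,\eta,\theta) = (\bfx+tv(\theta),\eta)$ pour $0 \le t \le t_0 := \e/2$. \emph{Second cas :} $\bfx \in \partial\Delta^\eta$. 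Par définition de l'admissibilité, $v(\theta) \in T^+_\bfx\Delta^\eta$, c'est-à-dire qu'il existe $\e>0$ avec $\bfx + sv(\theta) \in \hbox{int}(\Delta^\eta)$ pour tout $0<s<\e$. On est alors ramené, pour les temps $s\in(0,\e)$, à une situation intérieure : on applique le raisonnement précédent en un point $\bfx' = \bfx + (\e/2) v(\theta)$, en recollant avec le segment initial $[\bfx,\bfx']$. Ceci fournit un $t_0>0$ tel que $\Phi_t(\bfx,\eta,\theta)$ est bien défini (et donné par la translation rectiligne) sur $[0,t_0]$.

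**Point délicat.**

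L'obstacle principal n'est pas la dynamique elle-même mais la gestion des cas frontière : il faut s'assurer que la condition $v(\theta) \in T^+_\bfx\Delta^\eta$ implique réellement que le segment initial pénètre \emph{strictement} dans l'intérieur, et donc évite à la fois le bord du domaine $\partial\Delta$ \emph{et} les bords des obstacles situés en $\bfx$. Cela découle de la définition de $\Delta^\eta = \overline{\Delta \setminus \cup_{B\in\cO(\eta)} B}$ et du fait que les obstacles sont en nombre fini au voisinage de $\bfx$, de sorte que $\hbox{int}(\Delta^\eta)$ est bien un ouvert dont on peut extraire une boule ou un cône d'entrée. Une fois ce point acquis, le reste est une conséquence directe de l'existence locale du billard classique rappelée plus haut, et le fait que $\Phi_t$ ne fait que suivre la trajectoire rectiligne $t\mapsto (\bfx + tv(\theta),\eta)$ tant que $t<\tau$, avec $\tau>0$ garanti par construction. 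On pourra prendre pour $t_0$ n'importe quel réel strictement positif inférieur à ce $\tau$.
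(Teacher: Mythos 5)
Votre preuve est correcte et suit exactement l'argument que le papier se contente d'esquisser en une phrase (\guillemotleft~la d\'efinition des configurations admissibles garantit que le d\'ebut de la trajectoire est bien d\'efini [\dots] puisque c'est vrai pour le billard~\guillemotright) : la condition $v(\theta)\in T^+_\bfx\Delta^\eta$ et la finitude locale des obstacles donnent un segment initial rectiligne dans l'int\'erieur de $\Delta^\eta$, donc $\tau>0$ et la premi\`ere branche de la d\'efinition de $\Phi_t$ s'applique. Votre r\'edaction est simplement plus d\'etaill\'ee que celle du papier, sans s'en \'ecarter.
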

On peut ainsi prolonger la trajectoire jusqu'à ce qu'on rencontre un bord du domaine $\Delta^\eta$. La seule question délicate est celle de savoir si on peut prolonger les trajectoires lorsqu'elles rencontrent le bord du billard. Si la trajectoire au delà de cet instant est bien définie pour le billard, on dit  qu'il y a eu un {\em rebond}. Dans le billard, on ne peut prolonger la trajectoire que si le point du bord atteint admet une normale. Ici, on doit aussi lever une éventuelle ambiguité : il faut que $\beta(\bfx,\eta,\theta)$ soit bien défini. On va donc demander aussi que le point de rebond n'appartienne qu'à un seul obstacle. On peut être amenés à distinguer les rebonds sur les bords du domaine $\Delta$ (qui ne modifient pas la configuration) et les rebonds sur les bords des obstacles. 

On définit pour chaque configuration $\eta$ l'ensemble $S^\eta$ des sommets de $\Delta$ et des polygones consitutant $\cO(\eta)$. Observons que  tous les autres points de la frontière de $\Delta^\eta$ admettent une normale et appartiennent à un unique obstacle ou au bord de $\Delta$ ; en effet, les points appartenant à plusieurs obstacles de $\cO(\eta)$ qui ne sont sommets d'aucun de ces obstacles ne peuvent appartenir à la frontière de $\Delta^\eta$.  Notons que $S^\eta \subset S^\cO$. 
\begin{lemma}[Prolongation]
\label{prolongation}
Soit $\bfx_\tau$  la position de $\Phi_\tau^\eta(\bfx,v(\theta))$ au premier rebond. Si $\bfx_\tau$  n'appartient pas à $S^\eta$, alors $\Phi_\tau(\bfx, \eta, \theta) \in \cA$ et on peut prolonger le semi-flot au delà de $\tau$.  
\end{lemma}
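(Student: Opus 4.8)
The plan is to reduce the statement to the corresponding classical fact for polygonal billiards, using the observation — already noted in the excerpt — that the dynamics near a point depends only on the configuration in a neighbourhood of that point, and that the state reached after the bounce lands in a \emph{larger} billiard table, hence is automatically admissible. Let me spell this out. Suppose $\bfx_\tau \notin S^\eta$. By the remark preceding the statement, every point of $\partial\Delta^\eta$ that is not in $S^\eta$ admits a well-defined inward normal $\bf n$ and belongs either to $\partial\Delta$ or to a \emph{single} obstacle of $\cO(\eta)$; in particular the obstacle $\beta = \beta(\bfx,\eta,\theta)$ is unambiguously defined (or we are in the pure-billiard case, where $\eta$ is unchanged). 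Thus the two conditions required for $\Phi_\tau(\bfx,\eta,\theta)$ to make sense — existence of the normal and uniqueness of the struck obstacle — are both met.

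First I would treat the bounce itself. Since $\bfx_\tau$ is a regular boundary point of $\Delta^\eta$, the classical reflection $v \mapsto v'$ (symmetric of $v$ with respect to $\bf n$) is defined, so $\bill^\eta_\tau(\bfx,v(\theta)) = (\bfx_\tau, v')$ is a legitimate billiard state on $\Delta^\eta$. Now distinguish the two cases. If the bounce is on $\partial\Delta$, the configuration is unchanged and there is nothing more to check: $(\bfx_\tau, v') \in T^+\Delta^\eta$ by the classical billiard construction, so $\Phi_\tau(\bfx,\eta,\theta) = (\bfx_\tau, \eta, \theta') \in \cA$. If the bounce is on the boundary of the obstacle $\beta$, the new configuration is $\eta^\beta$, with $\cO(\eta^\beta) = \cO(\eta)\setminus\{\beta\}$, hence $\Delta^{\eta^\beta} \supset \Delta^\eta$. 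The point $\bfx_\tau$, which lay on $\partial(\beta)$, is now an interior point of $\Delta^{\eta^\beta}$ \emph{unless} it also lies on the boundary of another obstacle still present or on $\partial\Delta$ — but this is exactly excluded by $\bfx_\tau \notin S^\eta$ together with the structural remark (a point shared by two obstacles without being a vertex of either cannot lie on $\partial\Delta^\eta$). So $\bfx_\tau \in \mathrm{int}(\Delta^{\eta^\beta})$, and then \emph{any} unit vector belongs to $T^+_{\bfx_\tau}\Delta^{\eta^\beta}$; in particular $v' $ does. Therefore $(\bfx_\tau, \eta^\beta, \theta') \in \cA$.

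Finally, to genuinely ``prolong the semi-flow beyond $\tau$'', I would invoke Lemma \ref{existenceloc} (local existence) applied to the admissible state $\Phi_\tau(\bfx,\eta,\theta)$ just constructed: there is $t_0 > 0$ on which $\Phi_t$ of that state is defined, and concatenating with the trajectory on $[0,\tau]$ gives a well-defined continuation on $[0, \tau + t_0]$. I do not expect a serious obstacle here; the only point requiring care — and the one I would write out most carefully — is the claim that $\bfx_\tau \notin S^\eta$ suffices to guarantee $\bfx_\tau \in \mathrm{int}(\Delta^{\eta^\beta})$, i.e. that removing $\beta$ really does free up a neighbourhood of $\bfx_\tau$. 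This rests on the geometric hypothesis that obstacles have disjoint interiors and on the non-degeneracy assumptions (angles and sides uniformly bounded below), which prevent infinitely many obstacles from accumulating at $\bfx_\tau$; it is worth stating explicitly that, by the local-finiteness assumption, only finitely many obstacles meet a small ball around $\bfx_\tau$, so ``not a vertex of any of them and not shared by two of them on the boundary'' is a finite, checkable condition.
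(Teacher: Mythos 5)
Your proposal is correct and follows essentially the same route as the paper: the key step in both is that local finiteness of the obstacle family, combined with $\bfx_\tau\notin S^\eta$, yields a ball around $\bfx_\tau$ meeting no obstacle of $\cO(\eta)$ other than $\beta$, after which admissibility of the reflected state is immediate (the paper phrases this as prolonging the billiard trajectory in $\Delta^\eta$ and \emph{a fortiori} in the larger domain $\Delta^{\eta^\beta}$, while you observe directly that $\bfx_\tau$ becomes interior to $\Delta^{\eta^\beta}$ so that any direction lies in $T^+_{\bfx_\tau}\Delta^{\eta^\beta}$ --- a negligible variant). Your final appeal to Lemme~\ref{existenceloc} to actually continue past $\tau$ is consistent with the paper's structure.
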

\begin{proof}
Il n'y a qu'un nombre fini d'obstacles dans une boule centrée en $\bfx_\tau$. Cela entraine qu'il existe une boule autour de $\bfx_\tau$ dans laquelle il n'y a pas d'obstacle de $\cO(\eta)$  (autre que $\beta(\bfx,\eta,\theta)$), sinon $\bfx_\tau$ appartiendrait à un sommet de cet obstacle (puisqu'il n'est pas sur un sommet de $\beta(\bfx,\eta,\theta)$ et que  tout voisinage de $\bfx_\tau$ intersecte l'interieur de $\Delta^\eta$). En particulier on peut prolonger la trajectoire $\Phi_t^\eta(\bfx,v(\theta))$ au delà de $\tau$ dans $\Delta^\eta$ et donc à fortiori dans $\Delta^{\eta^{\beta(\bfx,\eta,\theta)}}$. 
\end{proof}

\vip

On note $\eta_{t-}$ la configuration juste avant l'instant $t$ et on considère  l'ensemble 
$$ {\cS}  = \{ (\bfx, \eta, \theta)  \in \cA \, : \, \exists t>0, \, \Phi_t (\bfx,\eta,\theta) \in S^{\eta_{t-}}\}.$$
Le lemme~\ref{prolongation} garantit que les trajectoires des états de $\cA^* = \cA \setminus \cS$ peuvent être prolongées indéfiniment ; on dira qu'elles sont {\em régulières}. 
L'objet de la suite est de montrer que 
\begin{proposition}
La mesure de Lebesgue de $\cS$ est nulle : $\lambda(\cS) =0$. 
\end{proposition}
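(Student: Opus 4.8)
The plan is to reduce the statement to a single initial configuration, then to split according to the number of collisions before the singularity is met, treating the first collision by an elementary Fubini argument and all later ones through the non-singularity of the collision map of the billiard.

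Since $\cA=\bigsqcup_{\eta\in\cC}T^+\Delta^\eta$ is a countable union and $\lambda$ restricts to the Lebesgue measure $\lambda_\eta$ on each piece $T^+\Delta^\eta$, it suffices to prove $\lambda_\eta(\cS\cap T^+\Delta^\eta)=0$ for every well-filled $\eta$; fix such an $\eta=\eta_0$. If $(\bfx,\eta_0,\theta)\in\cS$, let $t>0$ be the smallest time at which the position reached lies in $S^{\eta_{t-}}$. By minimality every collision occurring strictly before $t$ is at a regular boundary point, so by Lemma~\ref{prolongation} the trajectory is well defined on $[0,t)$; moreover it has only finitely many collisions there, since collisions can accumulate only near a vertex, and the billiard in a neighbourhood of a vertex is, after the finitely many erasures near it have taken place, a billiard in a finite union of wedges, which admits only boundedly many reflections (this also shows, classically, that the states whose trajectory undergoes infinitely many collisions in finite time form a $\lambda$-null set and may be discarded). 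Writing $\cS_n^{\eta_0}$ for the set of $(\bfx,\theta)\in T^+\Delta^{\eta_0}$ whose trajectory performs exactly $n-1$ regular collisions and then meets a vertex at its $n$-th collision, we get $\cS\cap T^+\Delta^{\eta_0}=\bigcup_{n\ge1}\cS_n^{\eta_0}$ up to a null set, so it is enough to show that each $\cS_n^{\eta_0}$ is null.

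For $n=1$ I would argue by Fubini in the coordinates $(\bfx,\theta)\in\Delta^{\eta_0}\times\cercle$. The vertex set $S^{\eta_0}\subseteq S^\cO$ is countable, being a countable union (over the obstacles, each a polygon with finitely many vertices) of finite sets. Hence, for $\bfx$ fixed in the interior of $\Delta^{\eta_0}$, the set of directions $\theta$ for which the ray issued from $\bfx$ in direction $v(\theta)$ meets $\partial\Delta^{\eta_0}$ first at a vertex is contained in the countable set of directions pointing from $\bfx$ to a point of $S^{\eta_0}$, hence is Lebesgue-null on $\cercle$; and the positions $\bfx\in\partial\Delta^{\eta_0}$ form a slice of zero three-dimensional measure. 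Fubini then gives $\lambda_{\eta_0}(\cS_1^{\eta_0})=0$. For $n\ge 2$ I would iterate collisions through a Poincar\'e section $\cR=\bigsqcup_{\eta\in\cC}\{(\bfx,\theta):\bfx\in\partial\Delta^\eta,\ v(\theta)\in T^+_\bfx\Delta^\eta\}$, a countable union of pieces of $1$-dimensional polygonal boundaries times $\cercle$, equipped with the measure $\nu$ obtained from arc length times $d\theta$ on each piece; let $V\subset\cR$ be the set of boundary states whose outgoing ray meets the boundary first at a vertex, so that $\nu(V)=0$ exactly as above. The semi-flow induces a collision-to-collision map $F\colon\cR\to\cR$ and a first-collision (``launch'') map $\Psi^{\eta_0}\colon T^+\Delta^{\eta_0}\to\cR$, both defined off null sets, and one checks that $\cS_n^{\eta_0}=(\Psi^{\eta_0})^{-1}\big(F^{-(n-2)}(V)\big)$ up to a $\lambda_{\eta_0}$-null set.

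The crucial point, and the part of the argument that has to be handled with care, is that $F$ and $\Psi^{\eta_0}$ are \emph{non-singular}: preimages of $\nu$-null sets are $\nu$-null under $F$ and $\lambda_{\eta_0}$-null under $\Psi^{\eta_0}$. This is exactly the content of the classical theory of polygonal billiards (see \cite{tab1}): outside its singular set, of codimension at least one, the collision map is piecewise smooth and preserves the measure $\cos\phi\,dr\,d\theta$ on the section, while the first-collision map is a smooth submersion post-composed with the volume-preserving free flow. What is specific to the casse-briques, and must be observed, is that erasing the brick hit at a collision does not affect this: the outgoing state is given by the ordinary billiard reflection on that brick's edge, and erasure merely transports the state to another configuration-sheet of $\cR$, a measure-theoretically transparent operation; the standing assumption that every ball meets only finitely many obstacles guarantees that along a finite trajectory only finitely many sheets are visited and that the relevant pieces of $\cR$ are genuine polygonal boundaries, so the classical local estimates apply unchanged. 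Granting non-singularity, $F^{-(n-2)}(V)$ is $\nu$-null for every $n$, hence $\cS_n^{\eta_0}$ is $\lambda_{\eta_0}$-null; summing over $n$ and then over $\eta_0$ yields $\lambda(\cS)=0$.
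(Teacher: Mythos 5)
Your proof is correct, but it follows a genuinely different route from the paper's. The paper fixes the angle $\theta$ and the configuration $\eta$ and shows, by induction on the number of rebounds, that the set of initial positions whose trajectory meets a vertex is a countable union of segments of slope $\theta$ (the inductive step being purely geometric: the incoming segments of slope $\theta_\tau$ meet the reflecting wall transversally, hence in a countable set, and the preimages of a countable subset of a wall form countably many segments of slope $\theta$); it then concludes by Fubini over $\theta$ and summation over the countably many configurations. You instead decompose by the index of the collision at which the vertex is reached and push a null set backwards through the collision map, invoking the quasi-invariance of $\cos\phi\,dr\,d\theta$ on the Poincar\'e section from the classical theory of polygonal billiards, together with the observation that erasing the brick only relabels the configuration sheet. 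Both arguments are sound; yours is more standard and would survive non-polygonal obstacles, but it imports the invariant-measure machinery, and it also requires you to dispose separately of trajectories with infinitely many collisions in finite time (which you do, at about the same level of detail as the paper's own remark on the uniform bound on the number of rebounds per unit time). The paper's argument is more elementary and, crucially, yields a \emph{fixed-angle} refinement that the rest of the paper actually uses (see the Remark following the proposition and the reduction to the base in Section~\ref{sec:restreint}): for each single $\theta$, the singular set is a countable union of segments, so its trace on any boundary segment is countable. Your Fubini over $\theta$ and your non-singularity of $F$ with respect to the two-dimensional measure $dr\,d\theta$ only give a null set in $(\bfx,\theta)$ jointly, so this sharper statement does not follow from your argument as written.
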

\begin{proof}
On considère  les ensembles 
$$\cS (\eta,\theta) = \{ \bfx \in \Delta^\eta \, : \, \exists t>0, \, \Phi_t (\bfx,\eta,\theta) \in S^{\eta_{t-}}\}.$$
Nous allons montrer que, pour chaque configuration $\eta$ et chaque angle $\theta$, $\cS(\eta, \theta)$ est une réunion dénombrable de segments de $\Delta^\eta$.  

\vip

Soit $N$ un entier. Appelons $S_N^\eta(\theta)$ l'ensemble  des points de $\Delta^\eta$ qui avec une vitesse $v(\theta)$ atteignent une singularité (un point de $S^\cO$) après moins de $N$ rebonds.  
Pour $N=1$ et pour toute configuration $\eta$, $S_1^\eta(\theta)$ est une réunion dénombrable de segments de pente $\theta$.

Soit $N\geq 1$. Supposons que, pour toute configuration $\eta$ et tout angle $\theta$,  l'ensemble $S_N^\eta(\theta)$ des points de $\Delta^\eta$ qui avec une vitesse $v(\theta)$ atteignent une singularité (ou un ensemble dénombrable contenant les singularités) après moins de $N$ rebonds soit une réunion dénombrable de segments de pente $\theta$. Nous allons montrer que cette assertion reste vraie au rang $N+1$.

Fixons une configuration $\eta$ et un angle $\theta$. L'ensemble des points qui atteignent $S^\cO$ en $N+1$ rebonds se décompose en, d'une part, ceux qui atteignent $S^\cO$ en un  coup, et, d'autre part, ceux qui font un rebond (donc n'atteignent pas $S^\eta \subset S^\cO$ au premier rebond), puis, atteignent $S^\cO$ en $N$ coups. Si on note $\tau$ l'instant du premier rebond, $\eta_\tau$ et $\theta_\tau$ la configuration et l'angle après le rebond,  cela s'écrit : 
$$S_{N+1}^\eta(\theta)= S_1^\eta(\theta) \cup \{ \bfx \in \Delta^\eta \setminus S_1^\eta(\theta) : \Phi_\tau(\bfx, \eta, \theta) \in S_N^{\eta_\tau}(\theta_\tau)\}. $$
Le premier ensemble ne pose pas de problème ; analysons le second. Le segment $s_\tau \subset \partial \Delta^\eta \subset \Delta^{\eta_\tau}$ sur lequel rebondit la trajectoire est bien défini et est atteint transversalement sinon on serait passé sur une singularité.   L'angle qu'il fait avec l'horizontale ne peut pas  être égal à l'angle d'incidence $\theta$, et donc pas non plus à l'angle  $\theta_\tau$ de la trajectoire après le rebond.   
Mais $S^{\eta_\tau}_N(\theta_\tau)$ est une réunion dé,nombrable de segments formant un angle $\theta_\tau$ avec l'horizontale~; ainsi son intersection $s_\tau \cap S_N^{\eta_\tau}(\theta_\tau)$  avec le segment $s_\tau$ est un ensemble dénombrable et l'ensemble des points de $\Delta^\eta  \setminus S_1^\eta(\theta)$ qui atteignent $s_\tau \cap S_N^{\eta_\tau}(\theta_\tau)$ est une réunion dénombralbe de segments d'angle $\theta$. Ainsi  $\{ \bfx \in \Delta^\eta \setminus S_1^\eta(\theta) : \Phi_\tau(\bfx, \eta, \theta) \in S_N^{\eta_\tau}(\theta_\tau)\}$ est la réunion (dénombrable) sur l'ensemble des segments $s$ formant le bord de $\Delta^\eta$ de ces réunions dénombrables de segments de pente $\theta$. On conclut que  $S_{N+1}^\eta(\theta)$ aussi. 

On a ainsi montré par récurrence que pour toute configuration $\eta$, tout angle $\theta$ et  tout entier $N$,  $S_N^\eta(\theta)$ est une réunion dénombrable de segments de pente $\theta$. En considérant la réunion de ces ensembles sur tous les entiers, on déduit l'assertion annoncée. 

\vip

On conclut la preuve en rappelant que l'ensemble des configurations bien remplies est dénombrable et en intégrant sur les angles (Fubini) un ensemble de mesure nulle.  
\end{proof}
\begin{remark}
Observons que la preuve entraîne en particulier que, pour chaque angle $\theta$, l'ensemble $\partial \cS(\theta) = \bigcup_{\eta  \in \cC}\cS(\eta,\theta)\cap  \partial \Delta^{\eta}$ est  dénombrable. 
\end{remark}

Ainsi, si on se restreint à l'ensemble de mesure pleine $\cA^*$, le casse-briques définit un semi-flot~:  pour tous $t,s \geq0$, $$\Phi_{t+s} = \Phi_t \circ \Phi_s.$$ 
Etant donné un état initial $(\bfx_0,\eta_0,\theta_0) \in \cA^*$, on notera~: 
$(\bfx_t,\eta_t,\theta_t) = \Phi_t(\bfx_0,\eta_0,\theta_0).$




\subsection{Invariance par translation.} Une translation de l'ensemble de la figure ne change  pas la dynamique. On peut formaliser cette invariance par translation en posant pour tout vecteur $\bf{u} \in \rr^2$,  $\cO(\eta+{\bf u}) :=  \{ B_z+{\bf u}, z \in \cI, \eta(z)=1\} \subset \cO + {\bf u} :=  \{ B_z+{\bf u}, z \in \{0,1\}^\cI \}$ et $(\bfx,\eta,\theta)+\bf{u} := (\bfx +\bf{u} ,\eta+\bf{u},\theta)$, et en notant $\Phi_t^{(\Delta+ \bf{u})}$ le casse-briques dans le domaine $\Delta+{\bf u}$ muni des obstacles $\cO +{\bf u}$ ;    avec ces notations, l'identité suivante, valable pour tout $t>0$, découle directement de la définition géométrique du système~:
\begin{equation}
\label{translationgenerale}
\Phi_t^{(\Delta+ \bf{u})}(\bfx+{\bf u} , \eta + {\bf u}, \theta) = \Phi_t^{(\Delta)}(\bfx, \eta, \theta)+\bf{u}. 
\end{equation}

\vip

Terminons par une série de remarques : 
\begin{itemize}
\item Si on autorisait toutes les configurations possibles, on aurait l'avantage d'avoir un ensemble compact. Mais on préfère s'intéresser uniquement à celles dans lesquelles on a oté un nombre fini de briques, les configurations bien remplies, seules accessibles en un temps fini. Cet ensemble de configurations est par définition stable (du fait qu'on ait qu'un nombre fini de rebonds par unité de temps). Observons que la propriété de connexité de $\Delta^\eta$ est elle aussi stable par le flot. 
\item Le nombre de briques ne peut que décroître. On peut s'interroger sur la limite (combinatoire) du système. Soit la configuration ne change plus à partir d'un certain rang (et on termine sur un billard classique) soit le système est transitoire (et l'éventuelle limite n'est pas une configuration bien remplie). 
\item Insistons de nouveau sur le fait que le flot n'est qu'un semi-flot. Il n'est pas injectif. Comme pour les billards classiques, on peut essayer, spécialement dans le cas d'angles "rationnels" de lui associer un flot sur une surface. Nos tentatives dans ce sens amènent à construire en fait un semi-flot sur une surface branchée non compacte. Voir la remarque~\ref{rem:geom}. 
\end{itemize}

\section{Domaine restreint}
\label{sec:restreint}
Fixons  un entier $K$ et revenons au casse-briques  $\Phi := \Phi^{\bfh}$ défini dans le domaine $\Delta_\bfh$ pour $\bfh \geq 0$ muni de la famille d'obstacles $\cO = \{ \carre_{{\bf z}}, {\bf z} \in \cZ_K\}$ qu'on appelera {\em briques}. L'ensemble des positions admissibles est naturellement  $\Delta_\bfh^{\cO}$, essentiellement réduit à une bande de largeur $K$ et de hauteur $\bfh$.  Nous nous contenterons de considérer des positions initiales de la bille  dans l'intervalle $[0,K] \times \{-\bfh\}$.   Observons que l'admissibilité entraîne alors que l'angle appartient à $(0,\pi)$. 
\vip

Nous commençons par faire quelques remarques élémentaires sur le comportement du casse-briques dans un tel domaine : 
\begin{itemize}
\item Rappelons que l'ensemble des configurations bien remplies $\cC_K$ est stable puisque le nombre de briques heurtées par unité de temps est fini.  
\item Le semi-flot est défini localement sur l'ensemble des états admissibles. Le semi-flot  est bien défini globalement (pour tout $t>0$) sur l'ensemble du domaine auquel on enlève, pour chaque angle, un nombre dénombrable de segments. C'est l'ensemble des états {\em réguliers}.  Cela permet en particulier de  voir que, pour tout angle $\theta$ fixé,   l'ensemble des positions initiales admissibles sur la frontière  qui rencontrent une singularité est dénombrable. En particulier,  si $\theta \in (0,\pi)$, l'ensemble des positions initiales d'ordonnée $-\bfh$  qui rencontrent une singularité est dénombrable.  
\item Partant d'un état initial $(\bfx_0,\eta_0,\theta_0)$ régulier, on observe que l'angle reste dans l'ensemble $\{\theta_0, \pi-\theta_0,-\theta_0,\pi+\theta_0\}$ ; on note $\tilde \theta_0$ l'unique élément de cet ensemble qui est dans $[0,\frac{\pi}{2}]$. Cela tient au fait que les parois du billard comme celles des briques sont soit horizontales, soit verticales. Ainsi, 
$$\forall (\bfx_0,\eta_0,\theta_0) \in \cA^*,  \theta_t \in \{\theta_0, \pi-\theta_0,-\theta_0,\pi+\theta_0\}.$$
\end{itemize}
\subsection{Décomposition des trajectoires en allers-retours}
On va decrire les trajectoires en termes de retours à la base de $\Delta_K$. On peut distinguer les parties montantes ($\theta_t \in [0,\pi]$) et les parties descendantes ($\theta_t \in [\pi, 2\pi]$). Pendant ces périodes, la vitesse verticale de la bille est constante, égale à $\pm \sin{\tilde \theta_0}$. Loin des briques, sur l'horizontale, la bille fait des allers-retours entre les deux parois fixes de $\Delta_K$. Lorsqu'elle arrive dans la zone où il y a des briques, les choses se compliquent, mais on peut espérer que la bille finisse par revenir dans l'autre sens après avoir heurté une paroi horizontale et revienne à la base.


\subsubsection{Retours à la base}
On appelle {\em base} de $\Delta_\bfh$ et on note  $B_\bfh$, l'ensemble des états admissibles $((x_1,x_2),\eta,\theta)$ satisfaisant $x_2=-\bfh$ et $\theta \in (0,\pi)$, c'est-à-dire que $B_\bfh=([0,K] \times \{-\bfh\}) \times \conf_K \times(0,\pi)$. On note  $B_\bfh^*= B_\bfh \cap \cA^*$ son intersection avec les  états réguliers.  
Pour un angle fixé $\theta$, on note $B_{\bfh,\theta} =([0,K] \times \{-\bfh\}) \times \conf_K \times \{\theta, \pi-\theta\}$ et $B^*_{\bfh,\theta}$ son intersection avec $B_\bfh^*$. Observons que c'est une réunion  d'intervalles privée d'un ensemble dénombrable de points.  

\vip
On note, pour $(\bfx,\eta,\theta)  \in B^*_\bfh$, $T(\bfx,\eta,\theta) = \inf \{t >0 \, : \, \Phi_t(\bfx,\eta,\theta) \in B^*_\bfh\}$ et on  définit une application $\phi_\bfh : B_\bfh^*  \cap \{T<\infty\} \to B_\bfh^*$ en posant 
$$\phi_\bfh(\bfx,\eta,\theta) = \Phi^\bfh_{T(\bfx,\eta,\theta)}(\bfx,\eta,\theta).$$ 
Observons que $\phi_\bfh$ envoie $B_{\bfh,\theta}^*  \cap \{T<\infty\}$ dans $B_{\bfh,\theta}^*$.  Le flot $(\Phi^\bfh_t)_{t>0}$ est une suspension au dessus de $\phi_\bfh$ dont $T$ est la fonction "toit". Lorsque cela est possible, on définit $T_n= T \circ \phi_\bfh^n$ le $n$ième temps de retour. Le temps total écoulé au $n$ième retour est noté $\tau_n= \sum_{k=0}^{n-1} T_k$. On appelle {\em aller-retour} la trajectoire du flot entre deux retours à la base. 

\begin{remark} Il n'est pas évident que $T < \infty$ même si on part d'une configuration bien remplie.   Il semble  que cela soit faux comme le laisse présumer l'exemple construit avec $\tan{\theta_0} = 3$ dans le casse-briques sur $\rr^2$ entier (voir Section~\ref{sec:exemples}). En revanche nous conjecturons que si la tangente de l'angle $\theta$ est irrationnelle, alors pour toute configuration $\eta$ bien remplie et toute position $\bfx$ telles que $(\bfx, \eta, \theta) \in B_{\bfh,\theta}^*$,   on a bien $T(\bfx, \eta, \theta) <\infty$. 
\end{remark}

\subsubsection{Hauteur}
Pour une configuration $\eta$, on définit la hauteur $H = H(\bfx,\eta,\theta)  = H (\eta)$ 
de la brique la plus basse et la hauteur $H ^+$ de la case juste au dessus de la case  vide la plus haute~: 
$$H(\eta) := \inf{\{ z_2 \in \zz ; \bfz=(z_1,z_2)  \in Z_K, \eta(\bfz) =1 \}}$$ 
et 
$$ H^+(\eta) := \sup{\{ z_2 \in \zz ; \bfz=(z_1,z_2)  \in Z_K, \eta(\bfz) =0 \}} + 1.$$ 
 Etant donné un état initial $(\bfx_0,\eta_0,\theta_0)$, on définit $(\bfx_t,\eta_t,\theta_t) = \Phi^\bfh_t(\bfx_0,\eta_0,\theta_0)$, puis $H _t$ (et $H ^+_t$) comme $H _t = H (\eta_t)$.   De la même manière, on définit 
$(\bfx_n,\eta_n,\theta_n) = \phi_\bfh^n(\bfx_0,\eta_0,\theta_0)$ et $H _n = H (\eta_n)$. 
On s'intéresse à la vitesse de fuite, i.e. au comportement asymptotique de $H _t$ ou/et de $H _n$. 

\begin{remark}
[Cas triviaux] Le cas où $\theta_0 = 0$ est particulier. Partant de la base il ne correspond pas à un état admissible. Mais de toutes façons, la dynamique ne serait pas intéressante (orbite horizontale periodique de periode $2K$). Le cas $\theta_0 = \pi/2$ est simple : si l'on part d'une position d'abscisse $x$ non entière, la bille effectue des allers-retours entre la base et une brique située de plus en plus haut. Plus précisemment, notons  $n_k = 2 \sum_{\ell=0}^{k-1}  (\bfh +\ell) = 2 k \bfh +  k (k+1)$ et $\eta_{k,z} = 1 - \sum_{i=1}^k 1_{\{i,z\}}$. Un calcul direct montre que $\Phi^{\bfh}_{n_k} ((x,-\bfh), \eta_0, \pi/2) = ((x,-\bfh), \eta_{k,\lfloor x \rfloor}, \pi/2)$. Observons que cela entraîne, pour $K=1$, que $H _t \cong \lfloor \sqrt{t } \rfloor$. 
\end{remark}

\subsubsection{Frontières et vitesse de fuite}
On appelle {\em frontière} de la configuration, la configuration dans la zone $H \leq z_2 \leq H_+$. 
L'enjeu ici est de voir que si on choisit l'angle suffisamment petit, l'ensemble des frontières possibles est fini (et même très simple). Nous dirons qu'une configuration $\eta$  satisfaisant $H^+(\eta) - H(\eta) \leq 1$ est {\it équilibrée}. 
\begin{lemma}
\label{lem:petitangle}
Pour tout $K$, il existe un angle $\theta^*(K)$ tel que si $0 < \theta_0 < \theta^*(K)$, pour toute valeur de $\bfh$, toute configuration initiale $\eta_0$ équilibrée et tout $\bfx_0$ tel que $(\bfx_0,\eta_0,\theta_0) \in B_{\bfh}^*$,  la configuration $\eta_t$ est  équilibrée pour toute valeur de $t$.  

En outre,  si $0 < \theta_0 < \theta^*(K)$, pour toute valeur de $\bfh$, toute configuration initiale $\eta_0$ bien remplie et tout $\bfx_0$ tel que $(\bfx_0,\eta_0,\theta_0) \in B_{\bfh}^*$,  il existe $t_0 >0$ (ne dépendant que de la case vide la plus haute, $H^+(\eta_0)$) tel que la configuration $\eta_{t_0}$ est  équilibrée.

On peut prendre $\theta^*(K) = \arctan{\frac{1}{K(K-1)}}$ et donc en particulier $\theta^*(2) = \arctan{\frac{1}{2}}$. 
\end{lemma}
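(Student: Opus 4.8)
Je raisonnerais essentiellement sur la géométrie d'un unique aller-retour. Fixons une configuration équilibrée $\eta$ et un état régulier $(\bfx,\eta,\theta_0)\in B_\bfh^*$ avec $\tilde\theta_0 = \theta_0$ petit. La clé est que, tant que la bille n'a pas encore atteint la rangée $H(\eta)$, elle évolue dans un billard dont les seuls bords sont les deux parois verticales $x_1=0$, $x_1=K$ et éventuellement des bords horizontaux de briques situés au-dessus ; entre deux chocs sur des parois verticales elle parcourt en abscisse exactement $K$ (ou une fraction de $K$ au premier et dernier segment), donc gagne en ordonnée au plus $K\tan\theta_0 < \frac{1}{K-1}$. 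L'idée est que la bille ne peut pas « sauter par-dessus » une rangée de cases : sur la largeur $K$ elle monte de moins d'une case, donc si la rangée $z_2$ contient encore au moins une brique et la rangée $z_2$ contient aussi un trou, la bille, avant de franchir l'ordonnée $z_2+1$, doit rencontrer une paroi horizontale (le dessous d'une brique de la rangée $z_2$, ou son dessus après l'avoir détruite) et donc repart vers le bas. Ce contrôle de pente est exactement ce qui impose la valeur $\theta^*(K)=\arctan\frac{1}{K(K-1)}$ : le facteur $K-1$ vient de ce qu'il peut rester jusqu'à $K-1$ trous consécutifs dans une rangée équilibrée à franchir avant de retomber sur une brique, donc la montée totale « utile » avant rebond est majorée par $(K-1)\cdot K\tan\theta_0 < 1$.

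Première étape : montrer qu'au cours d'un aller-retour partant de la base avec $\eta$ équilibrée, la bille ne modifie que des briques des rangées $H(\eta)$ et $H(\eta)+1=H^+(\eta)$, et que la configuration obtenue est encore équilibrée. Pour cela je découperais la phase montante : tant que l'ordonnée de la bille est $\le H(\eta)$, aucune brique n'est heurtée (toutes les briques des rangées inférieures sont présentes puisque $\eta$ est équilibrée et bien remplie en dessous de $H$) ; dès que la bille atteint l'ordonnée $H(\eta)$, soit elle frappe le dessous d'une brique de cette rangée et redescend immédiatement, soit elle passe par un trou de la rangée $H(\eta)$ — et alors, par l'estimée de pente ci-dessus, avant d'atteindre l'ordonnée $H(\eta)+1$ elle doit frapper soit le flanc vertical d'une brique de la rangée $H(\eta)$ (elle redescend dans le trou), soit le dessous d'une brique de la rangée $H^+(\eta)$ (qui est pleine puisque $\eta$ est équilibrée), d'où rebond vers le bas. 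Dans tous les cas la seule brique détruite est dans la rangée $H(\eta)$ ou $H^+(\eta)$, et un examen des cas montre que $H^+-H$ reste $\le 1$ après le choc : soit $H$ est inchangée et la rangée $H^+$ reste pleine, soit toute la rangée $H$ finit par se vider (au fil des aller-retours) auquel cas $H$ augmente d'une unité et la nouvelle rangée du dessus était déjà pleine.

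Deuxième étape : itérer. Comme $\eta_t$ reste équilibrée après chaque aller-retour, une récurrence sur le nombre d'aller-retours donne la première assertion — il faut juste s'assurer que chaque aller-retour est bien de durée finie, ce qui est ici automatique car pendant la phase montante la coordonnée verticale est strictement monotone jusqu'au premier rebond horizontal, lequel arrive en ordonnée $\le H^+(\eta_0)+1$, donc en temps borné, puis la phase descendante ramène à la base en temps borné (aucune brique n'est rencontrée en descendant puisqu'on est passé au-dessus d'un trou). Pour la deuxième assertion (configuration $\eta_0$ seulement bien remplie, pas nécessairement équilibrée), j'observerais que $H^+(\eta_0)-H(\eta_0)$ est fini et ne peut que décroître : chaque aller-retour qui atteint la rangée $H$ détruit au moins une brique de la rangée $H$ ou $H^+$, et un argument de comptage (le nombre de trous au-dessus de $H$ est fini) montre qu'après un nombre d'aller-retours borné par une fonction de $H^+(\eta_0)$ — donc après un temps $t_0$ ne dépendant que de $H^+(\eta_0)$ via la borne sur le temps d'un aller-retour — on atteint une configuration équilibrée, à laquelle on applique la première partie.

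L'obstacle principal sera le traitement soigneux de la phase montante à travers une rangée partiellement remplie : il faut vraiment vérifier, angle en main, qu'aucune trajectoire ne peut se faufiler obliquement au-dessus de la rangée $H^+(\eta)$ sans rebond horizontal, y compris dans les cas-limite où la bille passe très près d'un coin de brique (ces positions sont exclues par régularité, mais il faut contrôler un voisinage) et où elle traverse successivement plusieurs trous en biais ; c'est là qu'intervient de façon cruciale la majoration de la montée cumulée par $(K-1)K\tan\theta_0<1$, et donc le choix $\theta^*(K)=\arctan\frac{1}{K(K-1)}$. Le cas $K=2$ donne $\theta^*(2)=\arctan\frac12$, et comme $\tan\theta_0=1/4<1/2$ l'hypothèse du Théorème~\ref{th:vitesse} est bien couverte.
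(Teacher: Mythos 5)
Your plan has the same architecture as the paper's proof: show that one aller-retour starting from a balanced configuration yields a balanced configuration, induct, and for the second assertion observe that every brick below height $H^+(\eta_0)$ is destroyed before any brick at height $H^+(\eta_0)$ is touched. You also land on the same threshold $\tan\theta^*(K)=1/(K(K-1))$. But the step that actually produces the factor $K(K-1)$ --- controlling the vertical rise while the ball is between heights $H$ and $H+1$ --- is wrong as you describe it.

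First, a rebound on the vertical flank of a brick flips only the horizontal component of the velocity: the ball does \emph{not} ``redescend dans le trou''; it keeps climbing. So an ascent entering row $H$ through a hole does not destroy one brick and turn back. It keeps bouncing between the vertical walls of the remaining bricks of that row, destroying one at each bounce and widening the corridor each time, until either the row is entirely cleared or the ball has risen to height $H+1$; the whole point of the small-angle hypothesis is to force the first alternative, after which the ball strikes the underside of a (full) row $H+1$ and only then descends. A single aller-retour through a hole therefore destroys \emph{all} remaining bricks of row $H$ plus one brick of row $H+1$, not a single brick ``dans la rang\'ee $H$ ou $H^+$''. Second, your quantitative justification --- at most $K-1$ wall-to-wall traversals of width $K$, because there are at most $K-1$ holes to cross --- does not describe this dynamics: the number of horizontal rebounds before the row is cleared is governed by the growing set of holes, not by the number of holes initially present. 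The correct count (the paper's) is: when $b$ cells of row $H$ are empty, the ball confined between heights $H$ and $H+1$ moves in a corridor of horizontal extent at most $b$, hence travels at most $2b$ horizontally before it must hit (and destroy) the vertical wall of a brick; summing $2b$ over $b=1,\dots,K-1$ bounds the total horizontal displacement by $K(K-1)$ and hence the total rise by $K(K-1)\tan\theta_0<1$. Your expression $(K-1)\cdot K\tan\theta_0$ coincides numerically with this bound, but the product of ``$K-1$ holes'' by ``rise $K\tan\theta_0$ per traversal'' has no reason to dominate the actual rise; repairing the first error forces you into essentially the paper's computation.
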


Une conséquence immédiate est que, si $\theta_0 < \theta^*(K)$, alors   $T$ est fini  sur  $B_{\bfh,\theta_0}^*$. 
\begin{proof} Une première remarque est que (dans tous les cas), si $H^+_n=H_n$, alors $H^+_{n+1} = H_{n+1} + 1 = H_n+1$.  Plus généralement, si on atteint la hauteur $H_n$  à une abscisse correspondant à une brique occupée ($\eta(\lfloor \bfx_{t-} \rfloor) = 1$ juste avant à l'instant $t$ où on atteint la hauteur $H_n$), alors on détruit cette brique et on redescend. 

Ensuite on affirme que si l'angle est assez petit et qu'on dépasse la hauteur $H_n$ (en passant à hauteur $H_n$ à une abscisse où une brique a été préalablement détruite),   on va détruire d'abord complètement cette rangée avant de toucher la rangée supérieure. Une assertion intermédiaire est que : s'il y a $b$ briques vides à l'étage $H$ la bille ne peut parcourir une distance horizontale supérieure à  $2b$  (en restant verticalement entre les hauteurs $H$ et $H+1$) sans toucher une brique. Cela entraîne que, si la bille parcourt une distance horizontale  supérieure à $2(1+2+\cdots+K-1) = K(K-1)$ en restant entre les hauteurs $H$ et $H+1$,  elle aura détruit toutes les briques de cet étage.  Or c'est ce qui arrive si   $\tan \theta < 1 / K(K-1)$. Ainsi, sans avoir à préciser la combinatoire, on déduit que si $\tan \theta < 1/ K(K-1)$, toutes les briques de la rangée $H_n$ sont détruites avant que la bille n'atteigne la hauteur $H_n +1$. 
\vip
Si la configuration initiale était bien équilibrée, la bille  rebondit alors sur la paroi horizontale d'une brique du niveau $H_n+1$ avant de redescendre et la configuration $\eta_{n+1}$ est bien équilibrée. Cet argument permet une preuve par récurrence élémentaire de la première assertion. 
\vip
Pour prouver la deuxième assertion, le raisonnement ci dessus  nous permet d'affirmer que les briques situées à hauteur inférieure à $H^+(\eta)$ vont être toutes détruites avant que la première brique à hauteur $H^+$ ne soit touchée, de telle manière qu'au moment où elle atteint $H^+$ puis redescend, on a bien $H^+_{t_0} -H_{t_0} \leq 1$.  
\end{proof}

\begin{figure}
\includegraphics[width=60mm]{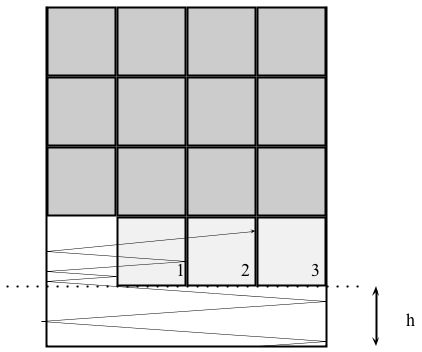}

  \caption{ Illustration de la pire situation possible pour $K=4$, avec un angle suffisamment petit pour que toutes les briques d'un étage soient malgré tout détruites en un seul passage. }
  \label{fig:presh=1/4}
\end{figure}

Ainsi, il apparaît que pour $\theta_0$ suffisamment petit, l'ensemble des configurations équilibrées est stable. En outre toute configuration initiale (bien remplie) devient équilibrée en un temps fini. Observons que ce résultat va nous permettre de travailler, mais ne prétend pas être optimal. 

 Pour $\theta < \theta^*(K)$, le nombre de configurations est fini (modulo la hauteur). C'est-à-dire qu'on peut paramétrer la configuration en indiquant la hauteur $H$ de la brique la plus basse et la liste (dans $\{0,1\}^K \setminus(0, \ldots, 0)$) des  briques présentes à l'étage $H $. Formellement, on peut écrire $\eta(H,\xi)$ où $H$ est un entier positif et $\xi \in  \conf^*_K=\{0,1\}^K \setminus(0, \ldots, 0)$ pour la configuration $\eta_{\bf z}=0$ pour $z_2<H$, $\eta_{\bf z}=\xi_{z_1}$ pour $z_2=H$ et  $\eta_{\bf z}=1$ pour $z_2>H$. 

Dans un premier temps,  on exploite simplement la forme particulière de la frontière pour donner une première estimation de la vitesse de fuite lorsque $\tilde \theta_0 \leq \theta^*(K)$. On peut montrer que  
si $\theta_0 < \theta^*(K)$, pour toute abscisse initiale, et si $H^+_0 - H_0 \leq 1$, le nombre de briques heurtées en un aller-retour est supérieur ou égal à $1$. En outre il est inférieur ou égal à $K$ (et même sous nos hypothèses, il est soit égal à $1$, soit égal à $1$ plus la totalité des briques restantes à l'étage $H$, mais ce n'est pas crucial à ce stade). On en deduit immédiatement que 
\begin{equation}
\frac{1}{K} \leq \frac{H _n}{n}  \leq 1. 
\end{equation}
En outre on peut relier  $T_n$ à $H _n$ en remarquant que soit $T = 2 (H+\bfh)/ \sin \theta_0$, soit $T= 2 (H +\bfh+1)/ \sin\theta_0$.  

\subsubsection{Identification des hauteurs}
On souhaite maintenant exploiter une certaine périodicité modulo $2K  \tan \theta$  pour ramener l'étude du casse-briques à celle d'un système dynamique sur un espace compact.   Le point est que si on a une hauteur sans briques de $2 K  \tan \theta$, on peut la "supprimer" sans changer la dynamique. Cela amène naturellement à regarder le système en considérant la hauteur modulo $2 K \tan \theta$. C'est un peu technique à exprimer, mais l'idée très simple est  exprimée par la figure~\ref{fig:tranche1}. Pour dire les choses précisemment, on va dire que l'application de premier retour du flot est inchangée si on retire une bande de hauteur $2 K  \tan \theta$ au flot. 
\vip

Rappelons d'abord l'invariance par translation suivante : si on translate tout le dessin d'un entier vers le bas, on ne change rien à la dynamique; Formellement, cela s'écrit, en posant ${\bf u} = (0,-p)$ et $(\eta+{\bf u}) ({\bf z}) = \eta({\bf z} - {\bf u})$ (cette configuration est vide sous la hauteur $p$),  pour tout $t>0$, 
$$
\Phi_t^{\bfh+1}(\bfx+{\bf u} , \eta + {\bf u}, \theta) = \Phi_t^{\bfh}(\bfx, \eta, \theta) + {\bf u},$$
et donc aux retours à la base, 
\begin{equation}
\label{translation}
\phi_{\bfh+1}(\bfx+{\bf u} , \eta + {\bf u}, \theta) = \phi_{\bfh}(\bfx, \eta, \theta) + {\bf u}.
\end{equation}
Cela entraîne qu'on peut toujours se ramener à une configuration avec $H=0$ quitte à changer $\bfh$. Cela revient à regarder le casse-briques depuis la "frontière". 

\vip

Une autre invariance, plus propre à ce modèle particulier est liée au fait qu'en l'absence de brique sur une tranche horizontale suffisamment grande, la trajectoire se répète. Plus précisement, on observe que, pour tout $\theta \in (0, \pi)$, si ${\bf x} =(x_1, x_2)$ satisfait $x_2+2K \tan \theta < H(\eta)$, alors, 
\begin{equation}
\label{decaluncran}
\Phi^\bfh_{2K \sin \theta} ({\bf x},\eta,\theta) = ({\bf x} + (0, 2K \tan \theta), \eta, \theta). 
\end{equation}

\begin{figure}
\includegraphics[height=80mm]{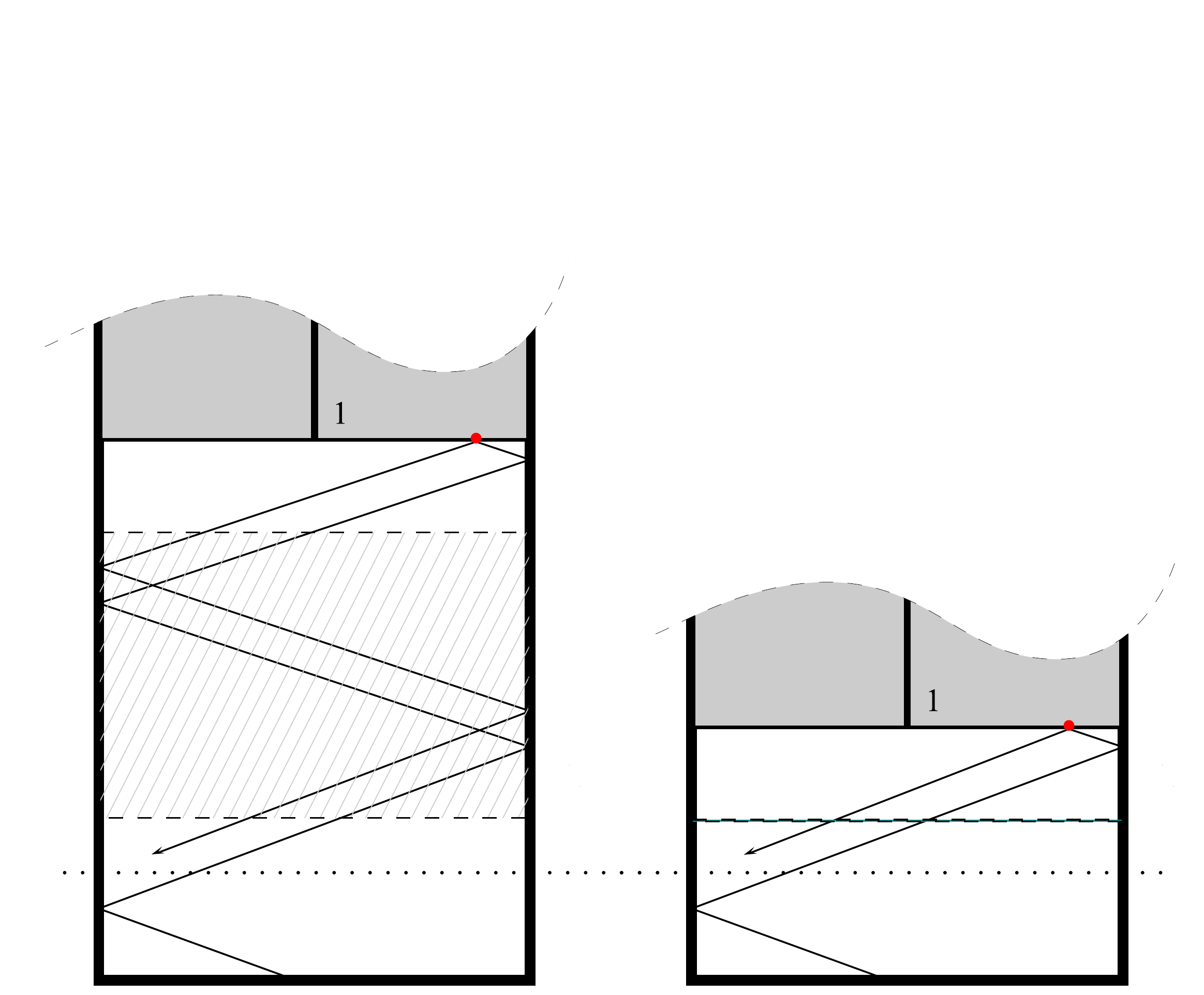}
\caption{Illustration de l'effet d'oublier une tranche de hauteur $2 K \tan{\theta}$~: vu d'en bas, l'impression est la même.} 
\label{fig:tranche1}
\end{figure}
Pendant un aller-retour à la base, cette identité à lieu à la montée et une identité symétrique au retour ; on en déduit l'égalité des applications de premier retour quand on supprime une tranche horizontale (vide) de hauteur $2K\tan{\theta}$ : 
\begin{lemma}
\label{lem:periode}
Soit $K \in \nn$.  Soient $\theta_0 < \theta^*(K)$ et $\bfh >0$.  Pour tout $(\bfx,\eta,\theta) \in B^*_{\bfh\bf,\theta_0}$, alors 
$(\bfx',\eta,\theta) = (\bfx - (0, 2K \tan \theta_0),\eta, \theta)  \in B^*_{\bfh+2K \tan \theta_0,\theta_0},$ et 
\begin{equation}
\label{periode}
 \phi_{\bfh + 2 K \tan \theta_0} (\bfx ',\eta, \theta) = \phi_\bfh(\bfx,\eta,\theta) 
 \end{equation}
 Enfin, si $p$ et $q$ sont deux entiers relatifs tels que $\bfh' = \bfh + 2 p K \tan \theta_0 + q >0$ et $q \leq H(\eta)$, 
\begin{equation}
\label{periodeplus}
 \phi_{\bfh'} (\bfx - (0, \bfh'-\bfh),\eta+q {\bf v}, \theta) = \phi_\bfh(\bfx,\eta,\theta) 
 \end{equation}
 \end{lemma}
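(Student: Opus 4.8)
L'id\'ee est de r\'ealiser l'excursion de $\phi_{\bfh'}$, o\`u $\bfh' = \bfh + 2K\tan\theta_0$, issue de $(\bfx',\eta,\theta)$, comme la concat\'enation de trois morceaux~: un aller-retour libre dans la zone vide situ\'ee sous les briques, faisant monter la bille de la hauteur $-\bfh'$ \`a la hauteur $-\bfh$~; l'excursion de $\phi_\bfh$ issue de $(\bfx,\eta,\theta)$~; puis un aller-retour libre dans cette m\^eme zone, faisant redescendre la bille de $-\bfh$ \`a $-\bfh'$. Les premier et troisi\`eme morceaux rendent compte exactement de la bande vide de hauteur $2K\tan\theta_0$ qu'on supprime, et n'alt\`erent pas la configuration~: c'est la formalisation de la figure~\ref{fig:tranche1}. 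Dans ce qui suit je d\'esigne par \eqref{decaluncran} cette identit\'e ainsi que ses images par les sym\'etries d'axe horizontal ou vertical de la bande $[0,K]\times\rr$, qui couvrent les deux angles $\theta\in\{\theta_0,\pi-\theta_0\}$ et les deux sens de parcours~: chacune dit qu'un aller-retour libre translate la bille de $\pm(0,2K\tan\theta_0)$ sans changer l'abscisse ni l'angle.

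Puisque $(\bfx,\eta,\theta)\in B^*_{\bfh,\theta_0}$, \'ecrivons $\bfx=(x_1,-\bfh)$ avec $x_1\in[0,K]$~; alors $\bfx'=(x_1,-\bfh')$, \'etat admissible car l'angle pointe vers le haut. Pour la r\'egularit\'e, j'observe que, jusqu'au premier instant o\`u la bille atteint la hauteur $-\bfh$, la trajectoire issue de $(\bfx',\eta,\theta)$ reste \`a des hauteurs strictement n\'egatives, donc strictement sous les briques (qui sont \`a hauteur $\geq H(\eta)\geq0$), et ne rencontre que les parois verticales $x_1=0$ et $x_1=K$, transversalement, jamais un sommet de $\Delta_{\bfh'}$ (les sommets du bas sont \`a la hauteur $-\bfh'$, non revisit\'ee puisque la hauteur y cro\^{\i}t strictement)~; comme $\bfh>0$ et $H(\eta)\geq0$ donnent $-\bfh<0\leq H(\eta)$, l'hypoth\`ese de \eqref{decaluncran} est v\'erifi\'ee et ce morceau est un aller-retour libre~: il existe $t_1>0$ avec $\Phi^{\bfh'}_{t_1}(\bfx',\eta,\theta)=(\bfx'+(0,2K\tan\theta_0),\eta,\theta)=(\bfx,\eta,\theta)$. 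Ensuite, tant que la bille reste dans $\{x_2\geq-\bfh\}$, les seules parois qu'elle peut rencontrer sont les deux parois verticales et les c\^ot\'es des briques de $\cO(\eta)$, c'est-\`a-dire exactement celles que voit le flot $\Phi^\bfh$ issu de $(\bfx,\eta,\theta)$ (la paroi basse de $\Delta_{\bfh'}$ \'etant \`a la hauteur $-\bfh'<-\bfh$)~; donc $\Phi^{\bfh'}_{t_1+s}(\bfx',\eta,\theta)=\Phi^\bfh_s(\bfx,\eta,\theta)$ pour tout $0\leq s<T(\bfx,\eta,\theta)$, et par d\'efinition m\^eme de $T$ la bille reste bien \`a hauteur $\geq-\bfh$ sur cet intervalle. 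Le morceau final est, de m\^eme, un aller-retour libre sans sommet. Au total, la trajectoire de $(\bfx',\eta,\theta)$ ne rencontre de singularit\'e que si celle de l'\'etat r\'egulier $(\bfx,\eta,\theta)$ en rencontre une~: ainsi $(\bfx',\eta,\theta)\in\cA^*$, c'est-\`a-dire $(\bfx',\eta,\theta)\in B^*_{\bfh',\theta_0}$. (On a aussi $T(\bfx,\eta,\theta)<\infty$ puisque $\theta_0<\theta^*(K)$, d'apr\`es le lemme~\ref{lem:petitangle}.)

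L'excursion de $\phi_\bfh$ ram\`ene la bille, \`a l'instant $t_1+T(\bfx,\eta,\theta)$, \`a la hauteur $-\bfh$ avec un angle descendant~; apr\`es sa r\'eflexion sur la paroi basse de $\Delta_\bfh$ on obtient $\phi_\bfh(\bfx,\eta,\theta)$, dont je note $y_1\in[0,K]$ l'abscisse, $\eta'$ la configuration, $\theta''$ l'angle (montant). Dans $\Delta_{\bfh'}$ il n'y a pas de paroi \`a la hauteur $-\bfh$~: la bille poursuit sa descente dans la zone vide (la configuration \'etant maintenant $\eta'$, avec $H(\eta')\geq H(\eta)\geq0>-\bfh$), et \eqref{decaluncran} (version descendante) la fait atteindre l'abscisse $y_1$ \`a la hauteur $-\bfh-2K\tan\theta_0=-\bfh'$ sans changer son angle~; l\`a elle est r\'efl\'echie sur la paroi basse de $\Delta_{\bfh'}$ exactement comme l'\'etait $\phi_\bfh(\bfx,\eta,\theta)$ sur celle de $\Delta_\bfh$, d'o\`u un retour \`a $B^*_{\bfh'}$ d'abscisse $y_1$, d'ordonn\'ee $-\bfh'$, de configuration $\eta'$, d'angle $\theta''$. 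Comme la hauteur est rest\'ee strictement $>-\bfh'$ entre le d\'epart et cet instant, c'est le premier retour~; donc $\phi_{\bfh'}(\bfx',\eta,\theta) = \phi_\bfh(\bfx,\eta,\theta)-(0,2K\tan\theta_0)$. En identifiant, comme dans toute cette section, la base de $\Delta_\bfh$ \`a celle de $\Delta_{\bfh'}$ par la translation verticale \'evidente, c'est l'identit\'e \eqref{periode}.

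L'identit\'e \eqref{periodeplus} s'en d\'eduit par composition. Une application de \eqref{translation} avec ${\bf u}=(0,-q)$ --- translation de toute la figure de l'entier $q$ vers le bas, qui envoie $\bfh$ sur $\bfh+q$, $\bfx$ sur $\bfx-(0,q)$ et $\eta$ sur $\eta+q{\bf v}$ avec ${\bf v}=(0,-1)$, l'hypoth\`ese $q\leq H(\eta)$ garantissant que $\eta+q{\bf v}$ est encore une configuration bien remplie port\'ee par $\cZ_K$ --- ram\`ene le probl\`eme \`a $\phi_{\bfh+q}(\bfx-(0,q),\eta+q{\bf v},\theta)$. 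Puis $|p|$ applications de \eqref{periode} ou de sa r\'eciproque --- insertion, ou suppression si $p<0$, de $|p|$ bandes vides de hauteur $2K\tan\theta_0$ sous les briques --- conduisent \`a $\phi_{\bfh'}(\bfx-(0,\bfh'-\bfh),\eta+q{\bf v},\theta)$~; cela est licite \`a chaque \'etape car la base y reste sous les briques, ce qu'assurent les hypoth\`eses $\bfh'>0$ et $q\leq H(\eta)$. Les identifications successives des bases se r\'esument \`a l'unique translation verticale reliant les deux membres de \eqref{periodeplus}. Le point r\'eellement d\'elicat est la co\"{\i}ncidence des trajectoires de $\Phi^{\bfh'}$ et de $\Phi^\bfh$ sur le morceau central~: elle repose sur le fait que l'excursion d\'efinissant $\phi_\bfh$ demeure (au sens large) au-dessus de sa hauteur de d\'epart $-\bfh$, de sorte que la seule diff\'erence entre $\Delta_{\bfh'}$ et $\Delta_\bfh$ --- la position de la paroi basse --- n'y intervient jamais.
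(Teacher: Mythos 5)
Votre preuve est correcte et suit essentiellement la même démarche que celle du papier : décomposition de l'excursion en montée libre dans la bande vide, aller-retour depuis $-\bfh$ identifié à celui de $\phi_\bfh$ via la propriété de semi-groupe, puis descente libre, le tout reposant sur \eqref{decaluncran} et sa version symétrique pour les angles descendants, la seconde assertion s'obtenant par applications répétées de \eqref{translation} et \eqref{periode}. Vous explicitez simplement davantage les vérifications de régularité, de premier retour et d'identification des bases que le papier laisse implicites.
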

\begin{proof}
On décompose la trajectoire entre la hauteur $-(\bfh+2K\tan\theta)$ et la hauteur $-\bfh$ (montée, aller retour depuis $-\bfh$, descente) en écrivant la propriété de semi-groupe. L'équation (\ref{decaluncran}) et sa symétrique pour  tout $\theta \in [\pi, 2\pi]$, si  ${\bf x} =(x_1, x_2)$ satisfait $ 2K \tan \theta  < x_2 < H$, permettent de conclure. 
 On obtient la seconde assertion en utilisant de manière répétée (\ref{translation}) et (\ref{periode})
\end{proof}

En particulier, cela va nous permettre de supprimer des tranches de hauteur $2K \tan \theta$ au fur et à mesure qu'on détruit des briques, de façon à "maintenir" la hauteur de la première brique dans un ensemble borné. Mais cela oblige à travailler avec le flot défini sur des domaines différents. Nous proposons ci-après une réparamétrisation du système qui permet d'exploitter cette remarque. 


\subsection{Reparamétrisation du système}

\label{sec:reparam}

Nous allons regarder notre système sous un angle un peu différent en profitant de différentes propriétés mises en évidence : 
\begin{itemize}
\item Les angles à la base ne prennent que deux valeurs. En créant une "copie" du système on se ramène à une seule valeur, passant d'une copie à l'autre quand l'angle change de valeur, c'est-à-dire lors des rebonds sur les parois verticales. C'est une transposition de l'idée, classique, qui permet de passer d'un billard à un flot géodésique sur une surface plate en "dépliant" le billard. 
\item Une fois l'angle initial  fixé, l'application de premier retour à la base est déterminée~: l'angle devient un paramètre et n'est plus à traiter comme une variable. 
\item Au contraire, pour se ramener à un système sur un compact, nous allons utiliser le lemme~\ref{lem:periode} pour maintenir la hauteur dans un intervalle borné. Pour le faire nous allons enlever des tranches de hauteur  $2K \tan \theta$. Mais cette opération modifie la hauteur $\bfh$ du flot à considérer et donc l'application à utiliser pour suivre la dynamique. Nous traiterons donc l'information restant sur la hauteur comme une variable. 
\end{itemize}
Ainsi, cela nous amène à définir un nouveau système dynamique pour lequel l'espace de phase est différent. Ce système est relié au système initial par une espèce de conjugaison. Le nouveau système est plus facile à étudier et l'information concernant une orbite du système initial peut être reconstruite à partir de l'état initial et de l'orbite dans le nouveau système.  

\subsubsection{Cas général}
Fixons $\theta_0 < \theta^*(K)$. On va reparamétrer le système 
par une famille de copies du  tore $\tore \times \{0,1\}^K$  indéxée par les configurations frontières $\cC^*_K=  \{0,1\}^K \setminus \{(0, \ldots, 0)\}$~: $x$ représente l'abscisse ($x_1$ si $\theta =   \theta_0$ et $2K-x_1$ si $\theta = \pi -  \theta_0$) divisée par $2K$, $h$ la hauteur $H+\bfh$ divisée par $ 2 K \tan\theta_0$ modulo $1$ et $\xi \in \cC^*_K$ est la configuration de la ligne à hauteur $H$. 
Ainsi, on associe à un état  $(\bfx,\eta, \theta)$ appartenant à la base (et issu d'une condition initiale $\theta_0$) un point $\Psi({\bf x}, \eta,\theta) = (x,h,\xi)$ de $\tore \times \cC^*_K$ en posant : 
$$ 
\begin{array}{lccc}
\Psi: & B_\bfh & \to & \tore \times \{0,1\}^K \\
& (\bf x,\eta, \theta) & \mapsto &  (x,h,\xi), 
\end{array}
$$
avec, 
\begin{eqnarray*}
\left\{ \begin{array}{lll}
x &=&  \left\{ \begin{array}{ll}  x_1 / 2K,  & \hbox{ si } \theta =  \theta_0 \\  1-x_1/2K,   & \hbox{ si } \theta=\pi-  \theta_0,  \end{array} \right. \\
h &=&  (H(\eta)+ \bfh) / (2 K \tan\theta_0) \hbox{ modulo } 1,  \\
\xi &=& (\eta(k,H))_{k=0,\ldots, K-1}. 
\end{array} \right. \end{eqnarray*}


\vip 
Pour chaque  $\bfh$, l'ensemble $B^*_{\bfh,\theta_0}$ des états réguliers est envoyé sur un sous ensemble de $[0,1] \times \{h\} \times \cC_K^*$ de mesure de Lebesgue pleine.  
Observons que l'application  $\Psi$ n'est pas inversible. Cependant elle préserve suffisamment d'information pour permettre de suivre l'orbite d'un point. Observons tout d'abord que  
\begin{lemma}
\label{quotient}
Soient $\theta \in \{\theta_0, \pi - \theta_0\}$,  $\eta$ et $\eta'$  deux configurations bien équilibrées, $\bfh$ et $\bfh'$ réels positifs, $\bfx \in \Delta_{\bfh}$  et $\bfx' \in  \Delta_{\bfh'}$ tels que  $\Psi(({\bf x}, \eta, \theta)) = \Psi(({\bf x'}, \eta', \theta))$ ; alors 
$$  H(\phi_\bfh(\bfx,\eta,\theta)) - H( \bfx,\eta,\theta) = H(\phi_{\bfh'}(\bfx',\eta',\theta)) - H(\bfx',\eta',\theta)$$
et
$$     \Psi( \phi_\bfh(\bfx,\eta,\theta)) = \Psi(\phi_{\bfh'}(\bfx',\eta',\theta)).$$
\end{lemma}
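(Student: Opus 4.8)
\emph{Plan de preuve.} Puisque $\theta_0 < \theta^*(K)$, l'application $\phi_\bfh$ est bien d\'efinie sur tout $B^*_{\bfh,\theta_0}$ (remarque qui suit le Lemme~\ref{lem:petitangle}) et pr\'eserve le caract\`ere \'equilibr\'e des configurations (Lemme~\ref{lem:petitangle})~; toutes les expressions figurant dans l'\'enonc\'e ont donc un sens. L'id\'ee est que, restreinte aux \'etats \'equilibr\'es de la base, $\Psi$ est le passage au quotient par les deux op\'erations g\'eom\'etriques du Lemme~\ref{lem:periode} --- la translation verticale enti\`ere, \'equation~(\ref{translation}), et la suppression d'une tranche horizontale vide de hauteur $2K\tan\theta_0$, \'equation~(\ref{periode}) --- et que $\phi_\bfh$ est \'equivariante pour ces op\'erations~; elle passe donc au quotient, ce qui est exactement l'\'enonc\'e.

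On \'etablit pour cela deux faits. Le premier~: chacune des deux op\'erations laisse invariants la variation de hauteur $H(\phi_\bfh(\cdot)) - H(\cdot)$ et la $\Psi$-image d'un \'etat de la base --- donc aussi, en les appliquant aux \'etats $\phi_\bfh(\cdot)$, la $\Psi$-image de $\phi_\bfh(\cdot)$. Pour une translation verticale d'un entier $q$, cela tient \`a ce que $H(\eta)$ et $\bfh$ varient de quantit\'es oppos\'ees, si bien que $h = (H(\eta)+\bfh)/(2K\tan\theta_0) \bmod 1$ est inchang\'e, tandis que $x_1$, la configuration $\xi$ de l'\'etage $H$ et la variation de hauteur le sont manifestement~; pour la suppression d'une tranche de hauteur $2K\tan\theta_0$ (\'equation~(\ref{periode})), la configuration --- donc $H$, $\xi$ et la variation --- est inchang\'ee et $h$ est translat\'e d'une p\'eriode enti\`ere. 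Le second fait~: si deux \'etats \'equilibr\'es $(\bfx,\eta,\theta)$ et $(\bfx',\eta',\theta)$ sur leurs bases respectives ont m\^eme $\Psi$-image, ils se d\'eduisent l'un de l'autre par une composition des deux op\'erations. En effet l'\'egalit\'e des $\Psi$-images donne $x_1 = x_1'$ ($\theta$ \'etant le m\^eme et l'application $x_1 \mapsto x$ injective pour chacune des deux valeurs), $\xi = \xi'$, et $H(\eta)+\bfh \equiv H(\eta')+\bfh' \pmod{2K\tan\theta_0}$~; comme une configuration \'equilibr\'ee est exactement l'une des configurations $\eta(H,\xi)$ d\'ecrites avant le Lemme~\ref{lem:petitangle}, elle est d\'etermin\'ee par l'entier $H$ et par $\xi$, de sorte que $\eta'$ est la translat\'ee verticale de $\eta$ par l'entier $H(\eta')-H(\eta)$~; compos\'ee avec le bon nombre de suppressions ou d'ajouts de tranches de hauteur $2K\tan\theta_0$, cette translation envoie $(\bfx,\eta,\theta)$ sur $(\bfx',\eta',\theta)$ --- c'est exactement l'identit\'e~(\ref{periodeplus}), dont les conditions annexes ($q\le H(\eta)$, automatique car $H(\eta')\ge 0$, et $\bfh'>0$, par hypoth\`ese) sont bien remplies.

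Munis de ces deux faits, on conclut~: on ram\`ene $(\bfx,\eta,\theta)$ et $(\bfx',\eta',\theta)$, par ces op\'erations, \`a l'unique forme normale (brique la plus basse \`a la hauteur $0$, hauteur de base dans $[0,2K\tan\theta_0)$) d\'etermin\'ee par la valeur commune $(x,h,\xi,\theta)$~; les deux membres de chacune des deux identit\'es annonc\'ees se calculent alors \`a partir de ce m\^eme \'etat et co\"incident donc. La seule v\'erification un peu soigneuse est l'ajustement des conditions annexes de~(\ref{periodeplus}) --- le bon signe de l'entier $q$, la positivit\'e des hauteurs de base interm\'ediaires ---, que l'on peut toujours assurer en ajoutant au pr\'ealable \`a $\bfh$ et \`a $\bfh'$ un m\^eme grand multiple de $2K\tan\theta_0$, ce qui ne change rien d'apr\`es le premier fait~; c'est ce travail d'indices, de nature routini\`ere plut\^ot que conceptuelle, que j'anticipe comme la principale (et mince) difficult\'e, le reste se r\'eduisant au d\'eroulement des d\'efinitions de $H$ et $\Psi$ et de l'invariance par translation~(\ref{translationgenerale}).
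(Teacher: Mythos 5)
Votre preuve est correcte et suit essentiellement la m\^eme d\'emarche que celle du papier~: caract\'eriser la fibre de $\Psi$ au-dessus d'un point $(x,h,\xi)$ comme l'orbite d'un \'etat sous les translations verticales enti\`eres~(\ref{translation}) et les suppressions de tranches de hauteur $2K\tan\theta_0$~(\ref{periode}), puis invoquer le Lemme~\ref{lem:periode} (en particulier~(\ref{periodeplus})) pour l'\'equivariance de $\phi_\bfh$. Votre version explicite simplement davantage la v\'erification des conditions annexes de~(\ref{periodeplus}), que le papier laisse implicite.
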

\begin{proof}
L'égalité $\Psi(({\bf x}, \eta, \theta)) = \Psi(({\bf x'}, \eta', \theta)) = (x,h,\xi)$, pour $\theta \in \{\theta_0, \pi - \theta_0\}$ et pour deux configurations $\eta$ et $\eta'$ bien équilibrées,  est équivalente à l'existence de ${\bf v }= (0,v)$ et d'un entier $k$  tels que $x'_1 = x_1$, $\eta = \eta'+{\bf v}$ et  $\bfh' - \bfh = x_2 - x'_2 = v + 2kK \tan \theta_0$. 
Dès lors, on peut utiliser le lemme~\ref{lem:periode} pour conclure. 
\end{proof}
En conséquence,  $\theta_0$ étant fixé, pour tout  $(x,h,\xi) \in \tore \times \cC^*_K$, 
 on peut trouver un  unique représentant $({\bf x}, \eta, \theta) \in B_\bfh$ avec $\bfh \in [0 , 2K \tan \theta_0]$ et $H(\eta) =0$   tel que $\Psi(({\bf x}, \eta, \theta)) = (x,h,\xi)$. Concrètement, on fixe $\bfh = (2K  \tan \theta_0)  h$,  on note  $\eta^\xi (z_1,z_2) = 1 - \indiq_{\{\xi(z_1) =0, z_2=0\}}$,  et on pose 
$$\Psi_*^{-1} (x,h,\xi) =    \left\{ \begin{array}{ll}  ((2Kx,-\bfh), \eta^\xi, \theta_0)  & \hbox{ si } x \leq 1/2  \\   ((1-2Kx,-\bfh),  \eta^\xi, \pi-\theta_0)    & \hbox{ si }  x >1/2.  \end{array} \right.  
$$
Cela va nous permettre de construire une application $\varphi$ sur $ \tore \times \{0,1\}^K$ satisfaisant $\varphi  \circ \Psi   = \Psi \circ \phi$ et une application $\Delta H$ telles que si   
$\Psi(\bfx,\eta,\theta) = (x,h,\xi)$ et $\varphi^n(x,h,\xi) = (x',h',\xi')$, alors, 

$$\phi^n( \bfx,\eta,\theta)   =     \left\{ \begin{array}{ll}  ((2Kx',-\bfh), \eta', \theta_0)  & \hbox{ si } x' \leq 1/2  \\   ((K-2Kx',-\bfh),  \eta', \pi-\theta_0)    & \hbox{ si }  x' >1/2, \end{array} \right.  
$$
où la frontière de $\eta'$ est donnée par $\xi'$ et sa hauteur par 
$$H(\eta') = H(\eta) + \sum_{k=0}^{n-1} \Delta H \circ \varphi^n(x,h,\xi).$$

\vip

Nous allons  maintenant construire une  fonction $\varphi$ sur $\tore \times\cC^*_K$ telle que 
$ \varphi  \circ \Psi= \Psi \circ \phi.$ On peut le faire de telle manière que : 

\begin{proposition}
\label{prop:conjug}
Soit $\theta_0 < \theta^*(K)$. Posons $\alpha = 1/(2 K \tan{\theta_0})$. Il existe une famille de partitions finies $\cP_h = \{I^h_i, i \in {\mathcal Q} \}$ de $\cercle  \times \cC^*_K$, indexée par $h \in [0,1)$, en intervalles et une famille $\{ (\gamma_i, \epsilon_i, \xi_i) , i \in {\mathcal Q} \} $ d'éléments de $\cercle \times \{0,1\} \times \cC^*_K$  telles que la fonction $\varphi$ définie, pour tout $i \in {\mathcal Q}$,  tout $h \in [0,1)$ et tout $(x, \xi) \in I_i^h$, par 
$$\varphi(x,h,\xi) = (x+2h+\gamma_i, h +  \epsilon_i \alpha , \xi_i)$$
satisfasse, pour tout $\bfh \in \rr_+$, et tout $(\bf x,\eta, \theta) \in B_{\bfh}$ avec $\theta \in \{\theta_0, \pi-\theta_0\}$, 
$$ \varphi  \circ \Psi  ({\bf x},\eta, \theta) = \Psi \circ \phi_{\bfh}  (\bf x,\eta, \theta).$$
\end{proposition}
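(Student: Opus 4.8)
The plan is to build $\varphi$ by following a single aller-retour explicitly, using the reparametrisation $\Psi$ to normalise the height of the lowest brick to $0$ via Lemma~\ref{lem:periode} and working with the canonical representative $\Psi_*^{-1}(x,h,\xi)$ described above. By Lemma~\ref{quotient} it suffices to define $\varphi$ on the representatives and check the intertwining there; the general identity $\varphi\circ\Psi=\Psi\circ\phi_\bfh$ then follows because both sides factor through $\Psi$ and, by Lemma~\ref{lem:periode}, $\phi_\bfh$ descends to the quotient. So fix $\theta_0<\theta^*(K)$, set $\alpha=1/(2K\tan\theta_0)$, and start from a base state with $H(\eta)=0$, frontier $\xi\in\cC^*_K$, and base height $\bfh=(2K\tan\theta_0)h$, i.e. $x_2=-\bfh$, with $\theta\in\{\theta_0,\pi-\theta_0\}$.

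First I would track the \emph{ascent}. Since the configuration is equilibrated (Lemma~\ref{lem:petitangle}), the ball rises from $-\bfh$ with constant vertical speed $\sin\theta_0$ and horizontal speed $\cos\theta_0$, bouncing between the two vertical walls $x_1=0$ and $x_1=K$; unfolding the billiard (the standard trick mentioned in the reparametrisation section) turns this into a straight line of slope $\tan\theta_0$ on the doubled strip, so the horizontal coordinate evolves affinely in the height gained. Because $\theta_0<\theta^*(K)$, Lemma~\ref{lem:petitangle} guarantees that the ball clears row $H=0$ entirely, destroying every remaining brick there, before it can reach height $1$; it then strikes a horizontal face of a brick in row $1$ and comes back down. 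The key finite data produced by the ascent are: which brick in row $1$ is hit first (this determines the new frontier $\xi'$ and whether $H$ increases by $1$ or stays), and the horizontal position at the turning point. All of this is a piecewise-affine function of $x$ with breakpoints depending on $h$ (through $\bfh$) — this is exactly the partition $\cP_h=\{I^h_i\}$, indexed by a finite set $\cQ$ of combinatorial types of aller-retour. On each cell $I^h_i$ the ball hits a definite sequence of bricks, so the post-turn height change is a constant $\epsilon_i\in\{0,1\}$ (recording whether the bottom row got exhausted and $H$ jumped) and the resulting frontier is a constant $\xi_i\in\cC^*_K$.

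Next I would track the \emph{descent} and the return to the base, and read off $\gamma_i$. Coming down from the turning point the vertical speed is again $\sin\theta_0$, the horizontal motion is again the unfolded straight line, and the ball may clip further bricks of the (possibly new) bottom row, but by the same small-angle argument it reaches $x_2=-\bfh$ (or $x_2=-\bfh'$ after a period $2K\tan\theta_0$ has been removed, if $\epsilon_i$ forced a bookkeeping shift) with an angle again in $\{\theta_0,\pi-\theta_0\}$. Composing ascent and descent, the total horizontal displacement of the unfolded trajectory over one aller-retour equals $2K\tan\theta_0$ times the total vertical excursion; dividing by $2K$ and using $h=(H+\bfh)/(2K\tan\theta_0)\bmod 1$, the new $x$-coordinate is $x$ plus a term equal to $2h$ (the contribution of twice the height already below the frontier) plus a cell-dependent constant $\gamma_i\in\cercle$ (the contribution of the $+1$ of $H^+$ and of the precise brick hit). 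This yields precisely $\varphi(x,h,\xi)=(x+2h+\gamma_i,\,h+\epsilon_i\alpha,\,\xi_i)$ on $I^h_i$, and the identities $\Delta H=\epsilon_i$ and the formula for $\phi^n$ in terms of $\varphi^n$ drop out of the construction. Finally, plugging $\Psi_*^{-1}(x,h,\xi)$ into $\phi_\bfh$ and comparing with $\Psi$ of the output gives $\varphi\circ\Psi=\Psi\circ\phi_\bfh$ on representatives, and Lemma~\ref{quotient} propagates it to all of $B_\bfh$.

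The main obstacle I expect is the bookkeeping of the horizontal coordinate through the unfolding and through the height-removal of Lemma~\ref{lem:periode}: one must check that the two conventions for $x$ (namely $x_1/2K$ for $\theta=\theta_0$ and $1-x_1/2K$ for $\theta=\pi-\theta_0$) glue correctly on the doubled torus, that removing a slab of height $2K\tan\theta_0$ shifts $h$ by exactly $\alpha$ modulo $1$ without disturbing $x$ or $\xi$, and — the genuinely delicate point — that the breakpoints of the piecewise-affine return map are finite in number and depend on $h$ only through the affine data, so that the cells $I^h_i$ and the constants $(\gamma_i,\epsilon_i,\xi_i)$ are well-defined uniformly in $h$. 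Establishing that finiteness is essentially a restatement of Lemma~\ref{lem:petitangle} (each row is cleared in a bounded number of bounces), so the argument is not deep, but it is the step where care is needed.
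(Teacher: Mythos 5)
Your plan follows the same route as the paper: reduce to a canonical representative with $H(\eta)=0$ via Lemmas~\ref{lem:periode} and~\ref{quotient}, follow a single aller-retour, and read off piecewise-constant combinatorial data $(\gamma_i,\epsilon_i,\xi_i)$ on a finite partition depending on $h$; the paper's own proof of the proposition is exactly this sketch, with the explicit formula worked out in Section~\ref{sec:expressionvarphi}.

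There is, however, one assertion in your ascent analysis that is false as written and would corrupt the map if the plan were carried out literally: you claim that, because $\theta_0<\theta^*(K)$, the ball always clears row $H=0$ entirely before reaching height $1$. That holds only when the ball arrives at height $0$ underneath an \emph{empty} cell. If it arrives underneath a brick still present, it bounces off that brick's lower horizontal face and returns immediately, destroying exactly one brick; the displacement is then exactly $2h$ (so $\gamma_i=0$), and $\epsilon_i=1$ only if that brick was the last of its row. This occupied/empty dichotomy at the frontier is the first and main case split of the paper's argument, and it is what produces the two families of cells, those with $\gamma_i=0$ and those with $\gamma_i=2\alpha+\beta(\xi,\tilde k)$ in the notation of Lemma~\ref{lem:expressionvarphi}; your later remarks about $\epsilon_i$ recording whether the row was exhausted show you have the right picture, but the ascent paragraph must be split accordingly. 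Two minor slips besides: the unfolded horizontal displacement is the vertical excursion \emph{divided by} $\tan\theta_0$ (not multiplied by $2K\tan\theta_0$), which is what makes the drift come out to $2h$ after dividing by $2K$; and the reflections off vertical faces of bricks --- which are not absorbed by the unfolding and contribute the jump term $\beta(\xi,\tilde k)$ to $\gamma_i$ --- occur during the ascent between heights $0$ and $1$, not during the descent.
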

Une preuve directe de ce résultat, consistant essentiellemnt à expliciter l'application $\varphi$ est donnée en section~\ref{sec:expressionvarphi}. Nous donnons ici un aperçu rapide des arguments qui permettent d'assurer les propriétés annoncées. 
\begin{proof}
Remarquons  que les contraintes combinatoires imposées font que seuls deux temps de retour sont possibles, selon (i) qu'on tape la paroi inférieure d'une brique de la première rangée (et que donc on redescend tout de suite), ou (ii) qu'on attaque la première rangée à un endroit où il n'y a pas de briques, dans quel cas, on va  détruire intégralement la rangée avant d'aller taper une brique de la rangée suivante sur sa paroi inférieure pour redescendre. Analysons les deux situations possibles : 
\begin{itemize}
\item La bille heurte une brique sur sa paroi horizontale inférieure et redescend ; on a alors modifié la configuration $\xi$ ; en général, la hauteur $h$ reste inchangée, sauf si la brique touchée était la dernière de sa rangée, dans quel cas, $h$ est incrémenté de $\alpha=1/2K \tan{\theta_0}$ et la nouvelle configuration frontière est pleine ; on observe que dans tous les cas, le déplacement horizontal est (vu sur le cercle) le même, de $\gamma(h) = 2h$. 
\item La bille arrive à la hauteur de la brique la plus basse en une case vide ; alors notre condition sur $\theta_0$ garantit qu'elle va heurter toutes les briques à cette hauteur (sur leur paroi verticale) avant d'en heurter une de la rangée supérieure. Il est clair que la durée de l'aller-retour est indépendante de la configuration et de l'ordre dans lequel elle heurte les différentes briques de la rangée puisque le module de la vitesse verticale est constant et que la distance verticale parcourue est $2h+1$. En revanche, le déplacement horizontal, lui,  dépend de manière un peu sophistiquée de la configuration $\xi$ et de la paroi de la première brique touchée. 
Enfin, relevons que la configuration image dépend de la brique heurtée sur la rangée supérieure. Mais toutes ces données combinatoires sont constantes par morceaux en fonction de l'abscisse de départ. 
\end{itemize}

Nous ne chercherons pas  formaliser plus ici. Il ressort de cette description que pour chaque hauteur $h$, la réunion d'intervalles correspondant aux abscisses et aux configurations peut être partitionnée en un nombre fini d'intervalles sur lesquels  $\varphi(x,h,\xi) = (x+2h+\gamma_i, h +   \epsilon_i \alpha, \xi_i)$ où $\gamma_i, \alpha_i$ et $\xi_i$ ne dépendent que de l'intervalle de la partition (elle-même dépendant de $h$) dans laquelle se situe $(x,\xi)$. Insistons en particulier sur le fait que $\epsilon_i= 0$ ou $1$. 
\end{proof}
En général, cette application peut être vue comme une transvection sur un produit fini de tores $\tore$. C'est-à-dire qu'il existe des partitions finies de chacun  des tores telles que sur chaque morceau de la partition, l'application soit la composée de la transvection $(x,h,\xi) \mapsto  (x+2h, h,\xi)$ et d'une "translation" de vecteur $  (\gamma_i, \epsilon_i \alpha, \xi_i)$ changeant éventuellement de tore. Mais sa forme particulière nous permet d'être plus précis ; en effet, la seconde coordonnée évolue suivant l'orbite $\cR_\alpha(h_0) = \{ h_0 + n\alpha \; ; \; n \in \nn\}$ d'une rotation d'angle $\alpha$ fixé partant de $h_0$. Lorsque $\alpha$ est rationnel, cet ensemble est fini : l'espace se décompose en réunion finie d'intervalles invariantes par la dynamique (indexées par $h_0$). La restriction de $\varphi$ à chacune de ces fibres, est une translation d'intervalles. 
\begin{cor}
Si $\alpha=1/2K\tan{\theta_0}$ est rationnel, alors la restriction de l'application $\varphi$  à la réunion finie d'intervalles $[0,1] \times \cR_\alpha(h_0) \times \{0,1\}^K$ est une translation d'intervalles. 
\end{cor}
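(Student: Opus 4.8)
The plan is to deduce the corollary directly from Proposition~\ref{prop:conjug}. That proposition gives, for fixed $\theta_0 < \theta^*(K)$ and $\alpha = 1/(2K\tan\theta_0)$, an explicit description of $\varphi$ on $\tore \times \cC^*_K$: there is a family of finite partitions $\cP_h = \{I^h_i, i \in {\mathcal Q}\}$ of $\cercle \times \cC^*_K$ indexed by $h$, and constants $(\gamma_i, \epsilon_i, \xi_i)$ with $\epsilon_i \in \{0,1\}$, such that on $I^h_i$ one has $\varphi(x,h,\xi) = (x + 2h + \gamma_i,\, h + \epsilon_i\alpha,\, \xi_i)$. The key structural fact I would extract first is that the second coordinate evolves autonomously: under iteration of $\varphi$, the $h$-coordinate always stays in the coset $\cR_\alpha(h_0) = \{h_0 + n\alpha \bmod 1 ; n \in \nn\}$, because each step either fixes $h$ or adds $\alpha$ modulo $1$. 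Hence the set $[0,1] \times \cR_\alpha(h_0) \times \{0,1\}^K$ is $\varphi$-invariant, and since $\alpha = p/q \in \mathbb{Q}$ in lowest terms, $\cR_\alpha(h_0) = \{h_0, h_0 + 1/q, \ldots, h_0 + (q-1)/q\}$ is finite with exactly $q$ elements, so this invariant set is a finite union of copies of the circle (with a finite label $\xi$).

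Next I would check that the restriction of $\varphi$ to this finite union is a translation of intervals in the sense recalled in the section on piecewise isometries. Concretely: the phase space $[0,1] \times \cR_\alpha(h_0) \times \{0,1\}^K$ is, after identifying $0$ and $1$ in the first coordinate, a disjoint union of $q \cdot (2^K-1)$ circles indexed by pairs $(h,\xi)$ with $h \in \cR_\alpha(h_0)$, $\xi \in \cC^*_K$. On each such circle (fixed $h$, fixed $\xi$), the partition $\cP_h$ restricted to the slice $\cercle \times \{\xi\}$ cuts it into finitely many consecutive intervals $I^h_i$, and on each $I^h_i$ the map acts as $x \mapsto x + 2h + \gamma_i \pmod 1$ while sending $(h,\xi)$ to the pair $(h + \epsilon_i\alpha, \xi_i)$. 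After relabelling the finitely many circles by a single index $j \in \{1,\ldots,d\}$ with $d = q(2^K-1)$ and concatenating them into a single interval $[0,d)$ (or simply viewing the whole object as a finite collection of intervals, which is the setting of a translation d'intervalles), the map is: on each piece of a finite partition, translate $x$ by a constant (the constant $2h + \gamma_i$, which depends only on the piece since $h$ is constant on a given circle) and move to another piece. This is exactly the definition of an \emph{échange d'intervalles} generalised to a translation d'intervalles — translation by a piecewise-constant vector, possibly not bijective.

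Finally, I would address the point that the definition of translation d'intervalles in the excerpt is stated for a partition of $[0,1]$, so one must package the finite disjoint union of $q(2^K-1)$ circles (each cut into finitely many arcs) as a single such object; this is the only nontrivial bookkeeping step and I expect it to be the main obstacle, though a mild one. One handles it by noting that a finite disjoint union of intervals, each carrying a piecewise-translation whose image pieces land in the union, is the natural notion of translation d'intervalles on a multi-interval base, and the one-interval formulation is recovered by the obvious rescaling/concatenation $[0,d) \to [0,1)$. No bijectivity claim is made — correctly, since $\varphi$ need not be injective — so nothing beyond the piecewise-translation structure is required. Once the phase space is recognised as finite-dimensional (a finite union of circles) and the per-piece action as a constant translation in $x$ together with a deterministic jump among finitely many pieces, the corollary follows immediately from Proposition~\ref{prop:conjug} and the finiteness of $\cR_\alpha(h_0)$.
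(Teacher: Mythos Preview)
Your proposal is correct and follows essentially the same approach as the paper: the paper's proof is a two-line remark that the result is immediate because $\epsilon_i \in \{0,1\}$ forces the set of reachable heights to be discrete (hence finite when $\alpha$ is rational), so that on each fibre $\varphi$ acts by piecewise translations. You have simply unpacked this one-liner in full detail, including the bookkeeping of concatenating the $q(2^K-1)$ circles into a single interval, which the paper leaves implicit.
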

\begin{proof}
C'est immédiat dans la mesure où $\epsilon_i \in \{0,1\}$. Il résulte que l'ensemble des hauteurs atteint est discret et que donc une fois $h_0$ fixé, $\varphi$ restreinte à l'orbite est une translation d'intervalles. \end{proof}
Observons que si,  en outre,  $h_0$ est rationnel, alors le système est périodique puisque les vecteurs de translation sont eux-même rationnels. 

\vip

Ce résultat donne l'impression qu'on devrait pouvoir comprendre en détail le comportement du casse-briques quand $\alpha$ est rationnel. Cependant, il reste une difficulté importante~: $\varphi$ n'est pas bijective. Nous sommes naturellement amenés à nous interroger sur son ensemble limite~: $\cap_{n\geq0} \varphi^n(\tore \times \cC^*_K)$ (et sa décomposition en sous ensembles invariants). Celui-ci pourrait être une réunion d'intervalles ; mais ce n'est pas le cas en général : à priori, cela peut aussi être un ensemble de Cantor de l'intervalle (voir reférence \cite{bos1}). Dans les deux cas, il reste à s'interroger sur la minimalité (difficulté plutôt combinatoire ici) puis sur l'ergodicité de la mesure de Lebesgue pour le  système dynamique induit par $\varphi$ sur l'ensemble limite. Ainsi, pour être plus précis, il faut comprendre mieux la combinatoire du système. Même dans le cas agréable où l'ensemble limite est une réunion d'intervalle, où la restriction de l'application est  un échange d'intervalles et donc où la mesure de Lebesgue correctement renormalisée est invariante,  son  ergodicité n'est pas garantie en général (voir référence \cite{kea1}) et une compréhension fine de la dynamique s'impose.

Nous ne chercherons dans  cette première étude à entrer dans le détail que dans le cas $K=2$ et $\tan{\theta} = 1/4$. Mentionnons cependant que ces difficultés combinatoires pourraient être levées et assurer, dans la situation irrationelle : 
\begin{conjecture}
Si $\alpha$ est irrationnel, alors $\varphi$ est (i) minimale, (ii) uniquement ergodique sur son ensemble limite.  
\end{conjecture}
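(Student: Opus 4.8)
\emph{Strat\'egie propos\'ee.} The plan is to transport the question onto the limit set $\Omega=\bigcap_{n\ge0}\varphi^n(\tore\times\cC^*_K)$, on which $\varphi$ ought to be a bijection: the defect of injectivity of a finite piecewise translation is carried by the countable union of its cut lines and their forward images, and $\Omega$ is precisely the complement of all those preimages, so $\varphi|_\Omega$ is a bijective piecewise isometry. One then fixes the candidate invariant measure $\mu$ on $\Omega$ (renormalized Lebesgue if $\Omega$ is a finite union of intervals, a suitable Hausdorff measure otherwise) and reduces the statement to a single claim: for every continuous $f$ the averages $\frac1n\sum_{k=0}^{n-1}f\circ\varphi^k$ converge \emph{uniformly} on $\Omega$ to $\int f\,d\mu$. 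Uniform unique ergodicity together with $\mathrm{supp}\,\mu=\Omega$ yields both minimality and unique ergodicity, i.e.\ assertions (i) and (ii).

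\emph{The engine.} Everything rests on the skew--product shape of Proposition~\ref{prop:conjug}. Writing $\varphi^n(z)=(x_n,h_n,\xi_n)$, one has $h_n=h_0+N_n\alpha\ (\mathrm{mod}\ 1)$ with $N_n=\#\{k<n:\epsilon_{i(z_k)}=1\}$; an advancing step ($\epsilon=1$) is exactly a step at which the true height $H$ increases by one, so $N_n=H(\eta_n)-H(\eta_0)$, and the escape estimate already established ($1/K\le H_n/n\le 1$) gives $N_n\to\infty$ linearly, $N_n\ge n/K$ eventually. In particular the $h$--coordinate runs through the whole orbit $\cR_\alpha(h_0)$, which is dense and equidistributed because $\alpha$ is irrational. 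The horizontal displacement at step $k$ is $2h_k+\gamma_{i(z_k)}$ with $\gamma$ finitely valued, so $x_n=x_0+2\sum_{k<n}h_k+\sum_{k<n}\gamma_{i(z_k)}$: the first sum is a linear cocycle over the $h$--rotation and produces a term comparable to $N_n^2\alpha$ whose fractional part equidistributes by van der Corput/Weyl, while the second is a Birkhoff sum of a bounded function and only contributes a linear drift. This is exactly the mechanism behind the classical fact (Furstenberg) that the affine skew product $(x,h)\mapsto(x+2h,\,h+\alpha)$ of $\tore$ is minimal and uniquely ergodic for irrational $\alpha$; the work is to push it through the time change $n\mapsto N_n$, through the finite $\xi$--extension, and through the piecewise and $h$--dependent nature of $\varphi$.

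\emph{Steps.} Concretely I would: (1) \emph{kill the $\epsilon$--cocycle by inducing}: let $A\subset\Omega$ be the set of states at which $\varphi$ performs an advancing step and pass to $\varphi_A$; on $A$ the return time is bounded (at most $K$), $h$ now advances by $\alpha$ at every step, and $\varphi_A$ is again a bounded, finitely--valued perturbation of the clean unipotent model $(x,h)\mapsto(x+2h,h+\alpha)$; (2) \emph{prove uniform unique ergodicity of $\varphi_A$} by the Fourier/van der Corput computation, checking that the $h$--dependence of the partition $\cP_h$ enters only through finitely many intervals and so does not disturb equidistribution, then deduce uniform unique ergodicity of $\varphi$ on $\Omega$ from that of $\varphi_A$ and the bounded return time; (3) \emph{handle $\xi\in\cC^*_K$ as a finite extension} over the uniquely ergodic base $(x,h)$: prove ergodicity by showing the transition structure on $\cC^*_K$ (brick removed; row emptied and the next row attacked) is strongly connected and admits no non-trivial invariant function, whence the extension is uniquely ergodic by the standard finite-extension criterion over a uniquely ergodic base; (4) conclude minimality and unique ergodicity of $\varphi$ on $\Omega$ using bijectivity on $\Omega$ and $\mathrm{supp}\,\mu=\Omega$.

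\emph{The main obstacle.} The reason this stays a conjecture is twofold. First and worst is the non-bijectivity of $\varphi$: pinning down $\Omega$ — finite union of intervals? null Cantor set? fat Cantor set? is $\varphi|_\Omega$ a genuine interval exchange? — demands exactly the fine combinatorics of the order in which bricks are destroyed, which is under control only for $K=2$, $\tan\theta_0=1/4$; without it one cannot even be sure that a renormalized Lebesgue measure is invariant, and \cite{bos1} warns that interval-translation limit sets can be Cantor while \cite{kea1} warns that interval-exchange restrictions need not be uniquely ergodic, so step (1) of \emph{Strat\'egie propos\'ee} is the crux. Second, in steps (1)--(2), if $N_n/n$ fails to converge the quadratic equidistribution of $x_n$ along advancing steps must be shown uniformly in the initial point, which is delicate; the cleanest remedy is to realize $\varphi$ as \emph{induced from}, rather than a factor of, the rigid unipotent model over $\cR_\alpha$ and invoke stability of unique ergodicity under inducing on a set of positive measure — but making this precise again needs the combinatorial picture of step (3). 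In short the ergodic-theoretic half follows a well-trodden unipotent-rigidity route, and the genuinely open half is combinatorial.
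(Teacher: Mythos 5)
The statement you are addressing is stated in the paper as a \emph{conjecture}: the authors explicitly present it as open, offering no proof and pointing only to the combinatorial difficulties (the structure of the limit set of an interval translation map, cf.\ \cite{bos1}, and the possible failure of ergodicity even for interval exchanges, cf.\ \cite{kea1}). Your text is likewise a strategy sketch rather than a proof --- you say so yourself in your last paragraph --- so there is nothing to certify here: the conjecture remains unproven on both sides. Your outline is a reasonable research programme and correctly locates the crux in the same place the authors do, namely the identification of $\Omega=\bigcap_{n\ge0}\varphi^n(\tore\times\cC^*_K)$ and of the induced dynamics on it.

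One structural point in your plan deserves a warning, because it would fail even if the combinatorics were handed to you. In steps (2)--(3) you treat $(x,h)$ as a uniquely ergodic Furstenberg-type base $(x,h)\mapsto(x+2h,h+\alpha)$ and $\xi$ as a finite extension over it. But in Proposition~\ref{prop:conjug} (and explicitly in formula (\ref{expressionvarphi})) the horizontal increment is $2h+\gamma_i$ where $\gamma_i$ depends on the piece $I^h_i$, hence on $\xi$ (through $\beta(\xi,\tilde k)$): the projection onto $(x,h)$ is \emph{not} a factor map, so the ``finite extension over a uniquely ergodic base'' criterion does not apply as stated. The combinatorial coordinate feeds back into the continuous one; this is precisely why the system must be analysed as a piecewise translation on a union of circles (an interval translation map in the sense of \cite{bos1}) rather than as a skew product over a rotation, and why the authors only carry the analysis through in the case $K=2$, $\tan\theta_0=1/4$, $h$ small, where Lemma~\ref{lem:induite} exhibits the induced map as an honest circle rotation. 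Your diagnosis that the open half is combinatorial is right; but the ergodic-theoretic half is not quite the ``well-trodden unipotent-rigidity route'' you describe, because the object is not a unipotent skew product.
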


\begin{remark}
\label{rem:geom} Lorsque $\bfh$ est rationnel, il est possible de faire une construction géométrique semblable à celle qui est communément faite pour les billards polygonaux rationnels : on peut en quelque sorte "déplier" le billard autour des parois sur lesquelles ont lieu les rebonds. C'est facile (et en partie  fait) le long du bord du domaine. C'est plus délicat le long des obstacles parce qu'il faut traduire le fait que l'obstacle heurté disparaît. Mais c'est possible en utilisant l'astuce permettant de se limiter à un ensemble compact de hauteurs ; le défaut de bijectivité faisant apparaître une {\em surface plate  branchée} compacte sur laquelle le casse-briques devient le flot géodésique.  Ce point de vue, s'il a pu guider notre intuition nous semble trop technique à développer par rapport à ce qu'il apporte pour l'instant. 
\end{remark}
\subsubsection{Cas $K=2$ et $\tan{\theta} = 1/4$}
Considérons maintenant le cas où $\tan \theta$ est rationnelle. Posons $\tan \theta = \frac{p}{q}$. Il vient $\alpha = \frac{q}{4p}$. Regardons dans quels cas $\alpha$ est entier. Si $p$ et $q$ sont premiers entre eux, cela n'arrive que si $p=1$ et $4$ divise $q$, i.e. si $\tan \theta = \frac{1}{4 q'}$. Dans ce cas l'application est particulièrement simple parce que $\alpha =0 $ modulo $1$ ne joue plus aucun rôle (et la partition est plus simple) ; elle est indépendante de la valeur de $q'$.  Pour chaque valeur de $h$, on note $I_h = [0,1] \times \{h\} \times \cC^*_2$ qu'on regarde comme trois copies d'un cercle (en identifiant $1$ et $0$ sur chaque copie de l'intervalle). On distingue en outre les positions correspondant à des orbites qui vont atteindre la hauteur $0$ à droite ($x<1/2$ ou $x > 3/2$) et à gauche ($1/2 < x < 3/2$) en posant  $I^h_1 = \cR_{-2h}([3/2,1/2])$ et $I^h_2 = \cR_{-2h}([1/2,3/2])$, modulo 1 ; ce sont des réunions de trois demi-cercles. On peut alors écrire~: \begin{eqnarray*}
\varphi(x,h,(1,1)) &=& \left\{ \begin{array}{ll} (x+2h, h, (0,1)),  & \hbox{ si } x \in I_1^h \\  (x+2h, h, (1,0)),  & \hbox{ si } x \in I_2^h  \end{array} \right. \\
\varphi(x,h,(0,1)) &=& \left\{ \begin{array}{ll}  (x+2h+1/2, h, (1,0) ) & \hbox{ si } x \in I_1^h   \\   
  (x+2h, h, (1,1)),  & \hbox{ si } x \in I_2^h   \end{array} \right. \\
\varphi(x,h,(1,0)) &=& \left\{ \begin{array}{ll} (x+2h, h, (1,1)),  & \hbox{ si } x \in I_1^h \\   
(x+2h+1/2, h, (0,1) ) & \hbox{ si } x \in I_2^h.  \end{array} \right. 
\end{eqnarray*}
On observe, entre autres choses, que comme attendu, la deuxième composante, $h$, ne bouge pas. La figure~\ref{fig:atantheta4} donne une idée des ensembles limites pour chaque valeur de $h$. Nous allons nous concentrer sur ce qui se passe pour $h$ proche de $0$.  
\begin{figure}
\label{fig:enslim}
 \includegraphics[height=120mm]{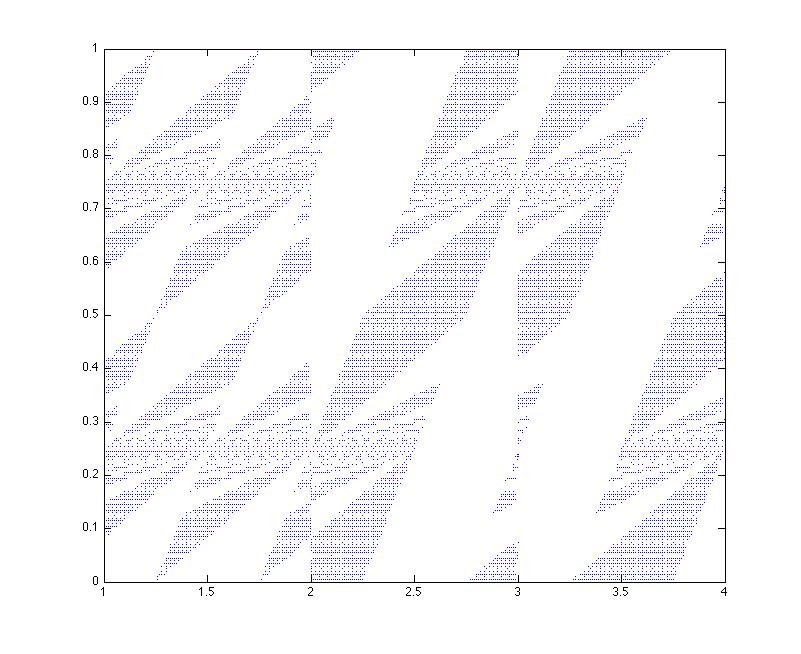}
\caption{On a tracé ici les orbites pour diverses  valeurs de $h_0$ : pour chaque $h_0$ et pour chaque position initiale ($x_0,\xi_0$), on a tracé l'orbite entre la 1000ème et la 2000ème itération, afin de faire apparaître l'ensemble limite. On distingue les trois tores correspondant à $\xi = (1,1), (0,1)$ et $(1,0)$ disposés cote à cote (intervalles $[1,2), [2,3)$ et $[3,4)$, respectivement). On voit apparaitre, près de $h=0$, l'ensemble limite tel que proprement décrit ci-après. La situation pour $h=1/4$ et $h=3/4$, rationnels est spécialement simple. Mais elle parait confuse dans leur voisinage. Voir aussi les figures~ \ref{fig:vitesse} et \ref{fig:presh=1/4}. 
}
\label{fig:atantheta4}
\end{figure}
Considérons d'abord le cas où $h=0$ (mod $1$). Il vient 
\begin{eqnarray*}
\varphi(x,0,(1,1)) &=& \left\{ \begin{array}{ll} (x, 0, (0,1)),  & \hbox{ si } x \in I_1 \\  (x, 0, (1,0)),  & \hbox{ si } x \in I_2  \end{array} \right. \\
\varphi(x,0,(0,1)) &=& \left\{ \begin{array}{ll} (x+1/2, 0, (1,0) ) & \hbox{ si } x \in I_1 \\  (x, 0, (1,1)),  & \hbox{ si } x \in I_2  \end{array} \right. \\
\varphi(x,0,(1,0)) &=& \left\{ \begin{array}{ll} (x, 0, (1,1)),  & \hbox{ si } x \in I_1 \\   
(x+1/2, 0, (0,1) ) & \hbox{ si } x \in I_2.  \end{array} \right. 
\end{eqnarray*}
Dans ce cas trivial, on s'aperçoit que 6 intervalles (de longueur $1/2$) sont envoyés les uns sur les autres. Si on les note $I_k^{conf}$, on peut représenter la dynamique par le graphe~:  
$$I_1^{(1,0)}  \to I_1^{(1,1)}  \to  I_1^{(0,1)} \leftrightarrow I_2^{(1,0)}       \leftarrow  I_2^{(1,1)}     \leftarrow  I_2^{(0,1)} . $$
Considérons maintenant $h>0$ pas trop grand. Nous affirmons qu'alors,

\begin{lemma}
\label{lem:induite}
Soit $h < 1/10$ fixé. (i) L'induite $\overline{\varphi}$ de $\varphi$ sur l'intervalle $G_h= [3/4+5h,1/4-h] \times \{h\} \times \{(0,1)\} $ s'écrit : 

$$\overline{\varphi}(x) =  \left\{ \begin{array}{ll}  x+4h  & \hbox{ si }  3/4+5h  \leq x < 1/4-5h  \\ x+10h-1/2 & \hbox{ si }   1/4-5h \leq  x \leq 1/4-h . \end{array} \right. $$
En outre, (ii) l'ensemble $\cJ_h = \cup_{k=0}^{4} \varphi^k(G_h)$ est invariant par $\varphi$ et (iii) $\varphi^3(I_h) \subset \cJ_h$. 
\end{lemma}
\begin{figure}
\begin{center} 
\includegraphics[width=100mm]{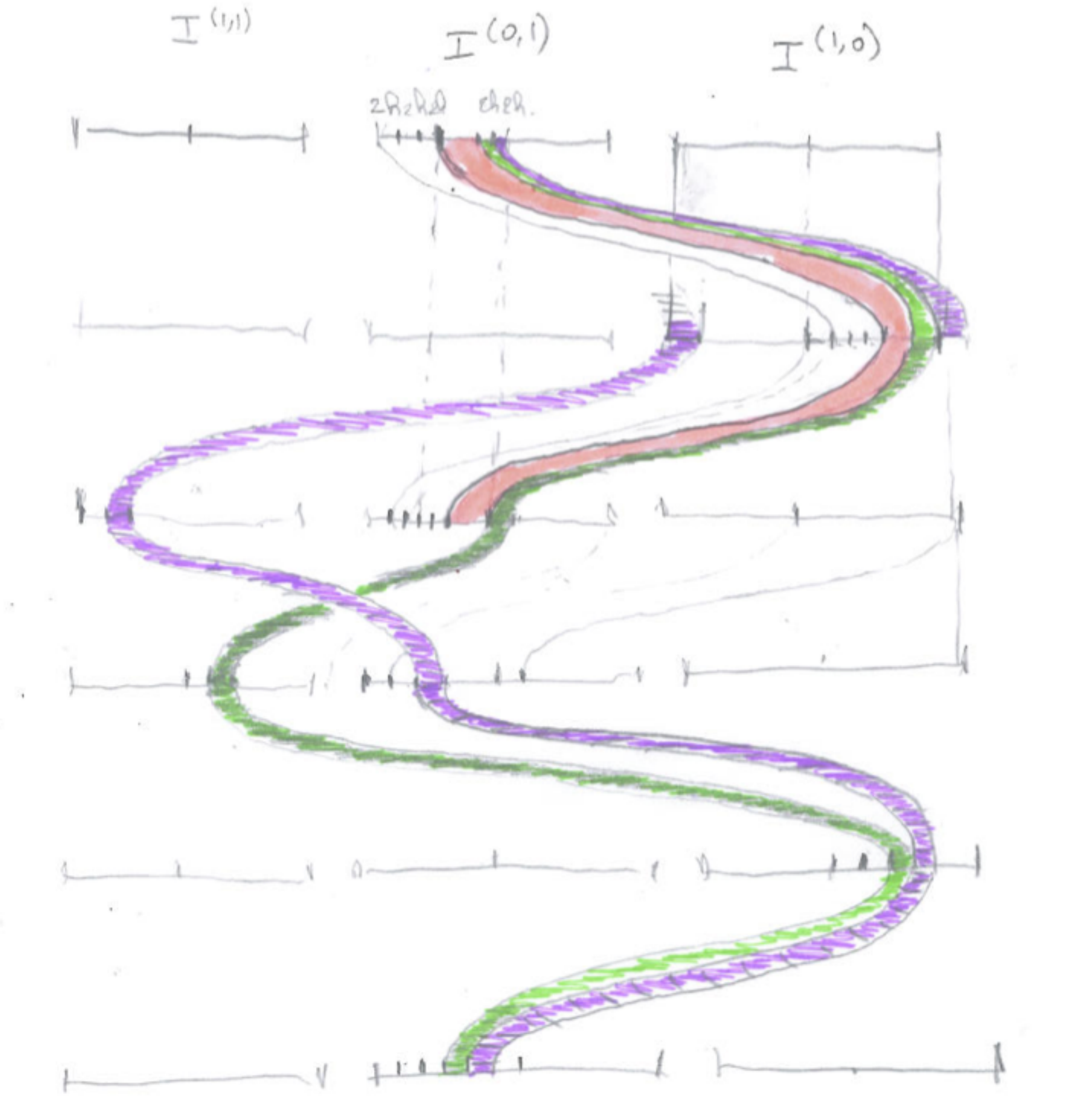}
\end{center}
  \caption{ Une vue de $\varphi$ et de l'induction. }
  \label{fig:hierarchie}
\end{figure}


\begin{proof}
Pour suivre la dynamique hors de $G_h$, il faut diviser l'intervalle initial en 3 morceaux. Les deux premiers se séparent du troisième à la première itération. Le premier revient dans $G_h$ dès l'itération suivante (mais pas le deuxième).  Les deux derniers se "recollent" (miraculeusement~?) à la quatrième itération avant de revenir dans $G_h$ à la cinquième. En écrivant les intervalles vus sur le cercle on obtient, sous l'action de $\varphi$ (on omet d'écrire la composante $\{h\}$ qui reste inchangée)~: 
\begin{itemize}
\item $[\frac{3}{4}+5h, \frac{1}{4}-5h] \times \{(0,1)\} \to [\frac{1}{4}+7h, \frac{3}{4}-3h] \times \{(1,0)\} \to [\frac{3}{4}+9h, \frac{1}{4}-h] \times \{(0,1)\}$
\item $[ \frac{1}{4}-5h, \frac{1}{4}-3h] \times \{(0,1)\} \to [\frac{3}{4}-3h, \frac{3}{4}-h]\times \{(1,0)\} \to  [\frac{1}{4}-h, \frac{1}{4}+h] \times \{(0,1)\}$ 
$\to [ \frac{1}{4}+h  , \frac{1}{4}+3h] \times \{(1,1)\}  \to [ \frac{1}{4}+3h  , \frac{1}{4}+5h] \times \{(1,0)\} \to [ \frac{3}{4}+5h  , \frac{3}{4}+7h] \times \{(0,1)\} \subset G_h  $
\item $ [\frac{1}{4}-3h, \frac{1}{4}-h] \times \{(1,0)\} \to [\frac{3}{4}-h, \frac{3}{4}+h] \times \{(0,1)\} \to [\frac{3}{4}+h, \frac{3}{4}+3h] \times \{(1,1)\} \to [\frac{3}{4}+3h, \frac{3}{4}+5h] \times \{(0,1)\} \to  [\frac{1}{4}+5h, \frac{1}{4}+7h] \times \{(1,0)\}  \to  [\frac{3}{4}+7h, \frac{3}{4}+9h] \times \{(0,1)\} \subset G_h$
\end{itemize}
Ainsi, $\varphi^3([\frac{3}{4}+5h, \frac{1}{4}-5h]) =  [\frac{3}{4}+9h, \frac{1}{4}-h]$,  $\varphi^5([ \frac{1}{4}-5h, \frac{1}{4}-3h]) = [ \frac{3}{4}+5h  , \frac{3}{4}+7h]$  et $\varphi^5([\frac{1}{4}-3h, \frac{1}{4}-h]) =  [\frac{3}{4}+7h, \frac{3}{4}+9h]$.  L'assertion (ii) découle immédiatement de cette analyse. On prouve l'assertion (iii) en analysant simplement l'action de $\varphi$ sur $I_h \setminus G_h$. 
\end{proof}

\subsection{Vitesse de fuite}
Nous allons maintenant montrer un résultat permettant de calculer la vitesse de fuite sous une hypothèse d'ergodicité. Nous verrons ensuite comment le lemme ~\ref{lem:induite} permet de l'appliquer dans le cas $K=2$, $\tan{\theta} = 1/4$, $h$ petit et ainsi prouver le théorème \ref{th:vitesse}. 
\subsubsection{Cas général}
Nous avons vu que sous la condition $\theta < \theta^*(K)$, la vitesse de fuite est comprise entre $1/K$ et $1$. Pour préciser le contrôle de la vitesse de fuite, on introduit $\Delta H = H \circ \phi - H$ qui représente l'incrément de la hauteur pendant un aller-retour. On distingue trois types d'allers-retour~: 

\vip

\noindent
(i)  ceux pour lesquels on heurte une brique à hauteur $H(\eta)$ sur son arête horizontale, ne détruisant que celle là et sans incrémenter $H$ (parce qu'il en reste d'autres) ; alors, $\Delta H =0$ et $T=2 (H+\bfh) / \sin{\theta}$ ;

\noindent
(ii) ceux  pour lesquels on heurte une brique à hauteur $H(\eta)$ sur son arête horizontale, alors que c'est la dernière de la ligne (alors $H$ est incrémenté de $1$) ; alors,  $\Delta H =1$ et $T=2 (H+\bfh) / \sin{\theta}$~;

\noindent
(iii) ceux pour lesquels on heurte les briques de niveau $H(\eta)$ sur une paroi verticale ; alors, puisque $\theta<\theta^*(K)$, on détruit toutes les briques de ce niveau et on finit par rebondir sur la paroi horizontale d'une brique de niveau $H+1$~; alors, $\Delta H =1$ et $T=2 (H+\bfh+1) / \sin{\theta}$. 

\vip

Cela fournit une partition de $B^*$ (de $B^*_{\bfh,\theta}$ pour chaque $\bfh$ et $\theta$) en trois morceaux, notés respectivement~:  $U_{0}$, $U_{+}$ et $U_{++}$.   On écrit naturellement la hauteur et la durée après $n$ retours comme des cocycles au dessus de $\phi$ : 
$$ H_n -H_0=  \sum_{k=0}^{n-1} \Delta H \circ \phi^k  =  \sum_{k=0}^{n-1}\indiq_{U_+ \cup U_{++}} \circ \phi^k .$$
On observe que 
$$T= \frac{2}{\sin{\theta}}   ((H+\bfh) \indiq_{U_0 \cup U_+} + (H+\bfh +1) \indiq_{U_{++}}) =  \frac{2}{\sin{\theta}}   (H +\bfh + \indiq_{U_{++}}). $$
Donc, 
\begin{eqnarray*}
 \tau_n &:=&   \sum_{k=0}^{n-1} T \circ \phi^k   =  \frac{2}{\sin{\theta}}   (n (H_0+\bfh) +  \sum_{k=0}^{n-1}  \left( (n-k) \indiq_{U_+ \cup U_{++}} +    \indiq_{U_{++}} \right) \circ \phi^k  ).
\end{eqnarray*}
Pour exploiter notre reparamétrisation du système,  on observe maintenant qu'une fois $\theta_0$ et $\bfh$ fixés,  la fonction $\Delta H$ est mesurable par rapport $ \Psi$. Autrement dit, si on fixe $A_0$, $A_+$ et $A_{++}$ images réciproques respectives  de $U_0$, $U_+$ et $U_{++}$ par $\Psi$, on a 
$$\Delta H ({\bf x}, \eta,\theta) = \indiq_{U_+ \cup U_{++}}({\bf x}, \eta,\theta)  = \indiq_{A_+ \cup A_{++}}(\Psi({\bf x}, \eta,\theta)) $$
On peut écrire explicitement ces ensembles. Plus généralement, dans la mesure ou $\indiq_U \circ \phi^k = \indiq_A \circ \Psi \circ \phi^k = \indiq_A \circ \varphi^k \circ \Psi$, cela permet d'écrire $H_n/n$ et $\tau_n / n^2$ comme dérivées de sommes ergodiques pour $\varphi$~: 

\begin{eqnarray}
\label{sommeergodique}
\frac{H_n}{n} &= & \frac{1}{n} \sum_{k=0}^{n-1}\indiq_{A_+ \cup A_{++}} \circ \varphi^k \circ \Psi \\
\label{sommeergodique2}
 \frac{\tau_n }{n^2} &=&     \frac{2}{n^2 | \sin{\theta} |}  (nH_0+   \sum_{k=0}^{n-1}  \left( (n-k) \indiq_{A_+ \cup A_{++}} +    \indiq_{A_{++}} \right) \circ \varphi^k  ).
 \end{eqnarray}
\begin{proposition}
\label{prop:fuite}
Soit $\mu$ une mesure de probabilité  invariante ergodique pour $\varphi$ et $\overline{\mu} = \Psi^*\mu$.  Alors, $\overline{\mu}$-p.s., 
$$ \lim_{n \to \infty} \frac{H_n }{n} =    \mu({A_+ \cup A_{++}})$$
et
$$\lim_{t \to \infty} \frac{H_t}{\sqrt{t}}  = \sqrt{\mu({A_+ \cup A_{++}}) \sin{\tilde{\theta}_0}}.$$
\end{proposition}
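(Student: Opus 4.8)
L'idée est d'appliquer le théorème ergodique de Birkhoff à la fonction $\indiq_{A_+ \cup A_{++}}$ au-dessus du système $(\varphi, \mu)$, puis de faire le pont entre les limites en $n$ (nombre d'allers-retours) et en $t$ (temps continu) via le contrôle du cocycle de temps $\tau_n$. Premièrement, je partirais de l'écriture \eqref{sommeergodique} : comme $\indiq_{U_+\cup U_{++}}\circ\phi^k = \indiq_{A_+\cup A_{++}}\circ\varphi^k\circ\Psi$ (conséquence de la mesurabilité de $\Delta H$ par rapport à $\Psi$ et de $\varphi\circ\Psi = \Psi\circ\phi$ de la Proposition~\ref{prop:conjug}), le théorème de Birkhoff appliqué au système ergodique $(\varphi,\mu)$ donne, pour $\mu$-presque tout point $(x,h,\xi)$,
$$\frac{1}{n}\sum_{k=0}^{n-1}\indiq_{A_+\cup A_{++}}\circ\varphi^k(x,h,\xi) \longrightarrow \mu(A_+\cup A_{++}).$$
En poussant par $\Psi$, ceci vaut $\overline\mu$-presque sûrement en $(\bfx_0,\eta_0,\theta_0)$, ce qui donne la première limite $H_n/n \to \mu(A_+\cup A_{++})$. (Il faut un instant s'assurer que l'ensemble de mesure pleine sur lequel Birkhoff s'applique se relève bien en un ensemble de $\overline\mu$-mesure pleine d'états réguliers ; c'est automatique puisque $\overline\mu = \Psi^*\mu$.)

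Deuxièmement, pour la limite en temps continu, j'utiliserais l'expression \eqref{sommeergodique2} de $\tau_n/n^2$. Les deux sommes ergodiques qui y apparaissent se traitent de la même façon : par sommation d'Abel (ou directement par le fait que $\frac{1}{n^2}\sum_{k=0}^{n-1}(n-k)a_k \to \frac12\lim\frac1n\sum a_k$ dès que les moyennes de Cesàro convergent), le terme $\frac{1}{n^2}\sum_{k=0}^{n-1}(n-k)\indiq_{A_+\cup A_{++}}\circ\varphi^k$ tend vers $\frac12\mu(A_+\cup A_{++})$, tandis que $\frac{1}{n^2}\sum_{k=0}^{n-1}\indiq_{A_{++}}\circ\varphi^k \to 0$ (borné par $1/n$ fois une moyenne de Cesàro convergente). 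On obtient ainsi, $\overline\mu$-p.s.,
$$\frac{\tau_n}{n^2} \longrightarrow \frac{\mu(A_+\cup A_{++})}{|\sin\theta|} = \frac{\mu(A_+\cup A_{++})}{\sin\tilde\theta_0}.$$

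Troisièmement, il reste à interpoler entre le temps discret et le temps continu. Pour $t>0$, on note $n(t)$ le nombre d'allers-retours achevés avant $t$, de sorte que $\tau_{n(t)} \leq t < \tau_{n(t)+1}$, et on observe que $H(\Phi_t) = H_{n(t)}$ ou $H_{n(t)}+1$ (la hauteur ne bouge qu'aux retours à la base, à une unité près pendant un aller-retour montant — en fait $H_{n(t)} \le H(\Phi_t) \le H_{n(t)}+1$ d'après la discussion des trois types d'allers-retours). Comme $n(t)\to\infty$ (car $T$ est fini sous $\theta<\theta^*(K)$), les deux convergences précédentes donnent $\tau_{n(t)}/n(t)^2 \to \mu(A_+\cup A_{++})/\sin\tilde\theta_0$ et $H_{n(t)}/n(t) \to \mu(A_+\cup A_{++})$, d'où en éliminant $n(t)$ :
$$\frac{H(\Phi_t)}{\sqrt t} = \frac{H_{n(t)} + O(1)}{n(t)}\cdot\frac{n(t)}{\sqrt{\tau_{n(t)}}}\cdot\sqrt{\frac{\tau_{n(t)}}{t}} \longrightarrow \mu(A_+\cup A_{++})\cdot\frac{1}{\sqrt{\mu(A_+\cup A_{++})/\sin\tilde\theta_0}}\cdot 1 = \sqrt{\mu(A_+\cup A_{++})\sin\tilde\theta_0},$$
le facteur $\sqrt{\tau_{n(t)}/t}\to 1$ provenant de $\tau_{n(t)}\le t<\tau_{n(t)+1}$ et de $\tau_{n+1}/\tau_n\to 1$ (conséquence de $\tau_n\sim c n^2$).

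\textbf{Principal obstacle.} Le point délicat n'est pas l'argument d'Abel ni l'interpolation, qui sont routiniers, mais le contrôle uniforme permettant de conclure que $t/\tau_{n(t)}\to 1$ et surtout que $H(\Phi_t)$ reste bien encadré par $H_{n(t)}$ et $H_{n(t)}+1$ : il faut être sûr que pendant un aller-retour la bille ne « s'égare » pas en hauteur, ce qui est précisément garanti par le Lemme~\ref{lem:petitangle} (configurations équilibrées, $\theta<\theta^*(K)$) — d'où l'importance de rester sous ce régime. Un second point à surveiller est que $\mu(A_+\cup A_{++})>0$, sans quoi la division par $\sqrt{\mu(A_+\cup A_{++})/\sin\tilde\theta_0}$ n'a pas de sens ; mais on sait déjà que $H_n/n \ge 1/K$, ce qui force $\mu(A_+\cup A_{++})\ge 1/K>0$. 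Le reste est une application standard du théorème ergodique une fois la conjugaison de la Proposition~\ref{prop:conjug} en main.
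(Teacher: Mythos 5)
Votre preuve est correcte et suit essentiellement la m\^eme d\'emarche que celle de l'article : th\'eor\`eme de Birkhoff pour $H_n/n$ via la semi-conjugaison $\varphi\circ\Psi=\Psi\circ\phi$, sommation d'Abel pour obtenir $\tau_n/n^2\to\mu(A_+\cup A_{++})/\sin\tilde\theta_0$, puis interpolation entre temps discret et temps continu \`a l'aide des bornes $|H_t-H_{n(t)}|\leq 1$ et $|t-\tau_{n(t)}|\leq 2H_{n(t)}$. Votre remarque sur la n\'ecessit\'e de $\mu(A_+\cup A_{++})>0$ (garantie par $H_n/n\geq 1/K$) est un point que l'article passe sous silence mais qui est bien r\'egl\'e par votre argument.
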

\begin{proof}
La première convergence découle d'une application directe du théorème ergodique à l'expression (\ref{sommeergodique}). Appliqué à (\ref{sommeergodique2}), le théorème ergodique assure que 
$$ \lim_{n \to \infty} \frac{\tau_n }{n^2}  =   \frac{  \mu({A_+ \cup A_{++}}) }{\sin{\tilde{\theta}_0}}. $$
Pour prouver la seconde convergence, on note $n = n(x,t)$  l'unique entier tel que $\tau_n \leq t < \tau_{n+1}$. De l'écriture 
  $$\frac{H_t}{\sqrt{t}} = \frac{H_n + (H_t - H_n)}{\sqrt{\tau_n + (t-\tau_n)}} = \frac{ \frac{1}{n} H_n + \left(\frac{H_t-H_n)}{n}\right)}{\sqrt{ \frac{\tau_n}{n^2} + \left(\frac{t-\tau_n}{n^2}\right)}},$$
on déduit 
$$\lim_{t \to \infty} \frac{H_t}{\sqrt{t}} = \frac{\mu({A_+ \cup A_{++}})}{\sqrt{   \frac{  \mu({A_+ \cup A_{++}}) }{\sin{\tilde{\theta}_0}} }} = \sqrt{\mu({A_+ \cup A_{++}}) \sin{\tilde{\theta}_0}}.$$
En effet, le point important est que $n(x,t)$ tend vers l'infini avec $t$ uniformément en $x$. Ensuite on observe simplement que : $|H_t - H_n| \leq |H_{n+1}-H_n| \leq 1$ et  $|t-\tau_n| \leq |\tau_{n+1} - \tau_n| \leq 2 H_n,$ de telle manière que $\frac{1}{n} |H_t - H_n|  \to 0$ et $\frac{|t-\tau_n|}{n^2} \leq \frac{H_n}{n^2} \to 0$. Cela permet d'évaluer la limite en $t$ connaissant les deux autres en $n$.  \end{proof}


\subsubsection{Preuve du Théorème \ref{th:vitesse}} 
Nous sommes maintenant en mesure de prouver le résultat annoncé~: il s'agit essentiellement de s'assurer de l'ergodicité de la mesure induite par la mesure de Lebesgue par $\Psi$ et d'expliciter les ensembles $A_+$ et $A_{++}$ pour calculer leur mesure. 
\begin{proof} Le lemme~\ref{lem:induite}  montre que, pour chaque $h<1/10$ irrationnel fixé,          l'application induite $\overline{\varphi}$ de $\varphi$ sur l'intervalle $G_h$  est conjuguée à une rotation d'angle $\overline{\alpha} = \frac{4h}{1/2-6h}= \frac{8h}{1-12h}$ qui est irrationnel dès que $h$ l'est. Puisque  $\overline{\alpha}$ est irrationnel, la mesure de Lebesgue est la seule mesure invariante pour $\overline{\varphi}$. L'application $\varphi$ restreinte à l'ensemble invariant $\cJ_h$ est un échange d'intervalles. Soit  $\mu$ une mesure invariante pour $\varphi_{| \cJ_h}$.  Sa trace sur $G_h$ est $\overline{\varphi}$-invariante~; c'est donc la mesure de Lebesgue sur $G_h$. L'invariance de $\mu$  par $\varphi$ composée de translations entraîne alors que $\mu$ est la mesure de Lebesgue sur $\cJ_h $. Celle-ci se trouve ainsi être l'unique mesure invariante pour $\varphi_{| \cJ_h }$. Cela nous permet d'affirmer qu'elle est ergodique. Ainsi, la restriction de la mesure de Lebesgue à $\cJ_h$ est une mesure $\varphi$-invariante et ergodique.

Rappellons que $\cJ_h$ s'écrit~:
$$\cJ_h= ([\frac{3}{4} + h, \frac{3}{4} + 3h] \cup [\frac{1}{4}+ h, \frac{1}{4}+ 3h]) \times \{(1,1)\} \cup  [\frac{3}{4} + 3h, \frac{1}{4}+h] \times \{(0,1)\} \cup [ \frac{1}{4}+3h, \frac{3}{4}+h] \times \{(1,0)\}.$$
Sa mesure totale est : $2h+2h+(1/2-2h)+(1/2-2h) = 1$. On  observe que la hauteur est incrémentée (de un) entre deux retours à la base (c'est-à-dire qu'on est dans $A_+ \cup A_{++}$) si et seulement si on est dans une configuration asymétrique, soit $(0,1)$, soit $(1,0)$.  Un calcul direct montre que la mesure de Lebesgue de $A_+ \cup A_{++} =\cJ _h\cap ( I^{(0,1)} \cup I^{(1,0)})$ est $1-4h$. Ainsi, en vertu de la proposition~\ref{prop:fuite}, il existe un ensemble de mesure de Lebesgue pleine dans $G_h$ tel que  pour tous les $(\bfx, \eta, \theta)$ tels qu'il  $\psi(\bfx, \eta, \theta)$ est dans cet ensemble, 
$$\lim_{n \to \infty} \frac{H_n}{n} = 1-4h, $$
 et 
 $$\lim_{t \to \infty} \frac{H_t}{\sqrt{t}} = \sqrt{(1-4h)/\sqrt{2}}.$$

Supposons maintenant $h<1/10$ rationnel. Le système est alors périodique ; il y a autant de mesures invariantes que d'orbites et la mesure de Lebesgue n'est pas ergodique : notre argument ne fonctionne plus. La répartition des orbites entre les intervalles $I^{(0,1)} \cup I^{(1,0)}$ et $I^{(1,1)}$ pourrait alors dépendre du point de départ.  Il se trouve ici qu'une analyse plus détaillée de la combinatoire montre que le résultat reste valable : en effet, toutes les orbites de $\overline{\varphi}$ passent la même proportion du temps dans les intervalles  $[ \frac{1}{4}-5h, \frac{1}{4}-3h] \cup [\frac{1}{4}-3h, \frac{1}{4}-h]$ car celui-ci étant précisément de longueur $4h$, on y passe une fois par tour.

Pour conclure complètement, il reste à relever que si l'état initial n'est pas envoyé dans $\cJ_h$  par $\Psi$ ou si la configuration initiale n'est pas bien équilibrée, il suffit d'attendre un temps fini avant que ces conditions soient satisfaites, en vertu de l'assertion  (iii) du  lemme~\ref{lem:induite} et  de la deuxième assertion du lemme~\ref{lem:petitangle}, respectivement. 
\end{proof}

\subsubsection{Argument dans un cadre un peu plus général} (Perturbation d'un cycle)
Pour une hauteur rationnelle $h^*$, le système est périodique puisque c'est un échange d'intervalles avec des translations rationelles. Considérons une partition suffisament fine pour que chaque intervalle soit envoyé exactement sur un intervalle et considérons le graphe correspondant. Ce graphe a des cycles ; chacun porte une (famille de) mesures invariantes. Imaginons d'abord qu'il n'y ait qu'un cycle et perturbons un peu la hauteur, i.e. considérons $h>h^*$ pas trop grand  ($h$ petit devant la longueur de l'intervalle sur la longueur du cycle). Essayons alors d'induire sur la réunion des intervalles de la partition appartenant au cycle. Le gros de cet ensemble est envoyé sur lui-même en respectant le cycle avec un petit décalage (de $2h$). Mais le bout (droit) de chaque intervalle est envoyé sur le bout gauche d'un autre intervalle de la partition. Ce bout suit ensuite la dynamique prescrite par le graphe (du rationnel, pour $h$ assez petit) et finit donc par revenir sur le cycle. Imaginons (cela n'est pas évident en général ; cela pourrait être dû aux symétries) que tous les bouts droits mettent le même temps (disons $k$) pour revenir au cycle. Ils arrivent alors décalés de $2kh$ ; poussons encore un peu pour attendre que chacun soit revenu dans l'image de l'intervalle de départ, ce qui nous fait $k'$ itérations. Il apparaît alors que l'induite sur la réunion des parties droites des intervalles du cycle (on a enlevé à gauche exactement $2k'h$) est essentiellement une rotation d'angle $2h$. Pour dire mieux, si on induit cette application sur un des morceaux (itérée $\ell$ fois), on a vraiment une rotation, d'angle $2 \ell h$.  Cet argument fonctionnerait pour certaines valeurs de $h^*$, mais nous ne voyons pas de manière de s'assurer qu'il fonctionnerait pour tout rationnel.


\begin{figure}
\includegraphics[width=150mm]{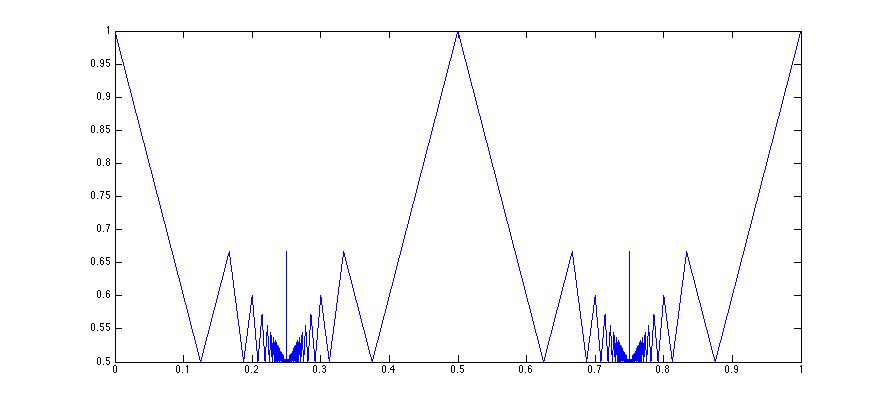}
  \caption{Représentation de la vitesse de fuite ($\lim H_n/n$) en fonction de $h$ pour $\tan \theta = 1/4$. Le graphe est tracé à partir d'une simulation. Elle est cohérente avec le résultat obtenu et ouvre des pistes pour des conjectures pour d'autres valeurs de $h$.}
  \label{fig:vitesse}
\end{figure}

\begin{figure}
\includegraphics[width=150mm]{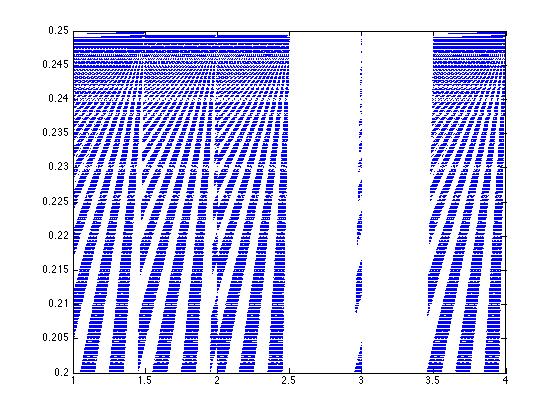}
  \caption{Vue rapprochée de la figure~\ref{fig:enslim} près de la valeur $h=1/4$, toujours pour $\tan \theta_0 = 1/4$.}
  \label{fig:presh=1/4}
\end{figure}


\subsection{Expression explicite de l'application de premier retour}
\label{sec:expressionvarphi}
Dans cette section, nous allons expliciter complètement l'application de premier retour dans la nouvelle paramétrisation proposée en section~\ref{sec:reparam}. Nous aurons besoin de quelques nouvelles notations. 

\subsubsection{Notations} On pose :  
$$\alpha = \frac{1}{2 K \tan{\theta_0}} \hbox{ et }  \beta(\xi, k) =   \sum_{i=0}^{k-1} \xi(i)   \frac{(i+1)}{K}-  \sum_{i=k+1}^{K-1}  \xi(i)  \frac{i}{K}, \hbox{ modulo } 1. $$
Pour tout $k \in \{0, 2K-1\}$, on définit  
$J_k =   [ \frac{k}{2K}, \frac{k+1}{2K}], \hbox{ et } J_k^h = J_k - h =   [ \frac{k}{2K}-h, \frac{k+1}{2K}-h].$
On observe qu'étant donnés $h>0$, $\gamma>0$ et $k$ il existe au plus deux valeurs $l$ et $l'$ de $\ell$ telles que $J_k \cap J_{\ell}^{\gamma}$ ne soit pas vides et qu'en outre $l'=l+1$.  
On note $\ell = \ell(\xi,k)$ le plus petit de ces deux entiers pour $h$ et $k$ donnés et $\gamma = \alpha + \beta(\xi, \tilde k)$.  On pose $\tilde \ell = \ell$  si $\ell$ est strictement inferieur à $K$ et $\tilde \ell = 2K - \ell -1$ sinon. Pour $\epsilon \in \{0, 1\}$, on définit  
$$I_k^{h,\epsilon}    =  J_k^h  \cap J_{\ell+\epsilon}^{h+\gamma}.$$

\vip

On définit une application $\varphi$  sur $\tore \times \cC_K^*$ en posant, pour chaque $(x,h,\xi)$ : soit $\epsilon \in \{0,1\}$, $k \in \{0, 2K-1\}$ tels que  $x \in I^{h,\epsilon}_k$.

\begin{equation}
\label{expressionvarphi}
\varphi(x,h,\xi) = \left\{ 
 \begin{array}{lllll}
(x+2h, &h, &\xi - \indiq_{\tilde k}&) & \hbox{ si } \xi(\tilde k) = 0 \hbox{ et } \sum_{i=0}^{K-1} \xi(i) > 1\\
 (x+2h, & h+\alpha, &1&) & \hbox{ si } \xi(\tilde k) = 0 \hbox{ et } \sum_{i=0}^{K-1} \xi(i) = 1\\
(x + 2h + 2\alpha +  \beta(\xi, \tilde k), &h+ \alpha,  & 1 - \indiq_{\{\widetilde{ \ell+\epsilon}\}}&) & \hbox{ si } \xi(\tilde k) = 1. 
\end{array}
\right.
\end{equation}

On vérifie  que si $x \in I_k^{h,\epsilon}$, alors, d'une part $x+h \in J_k$ et, d'autre part, $x+h + \alpha + \beta(\xi,\tilde k) \in J_{\ell + \epsilon}$. 
Pour $h$ fixé, on observe que  $\left\{  I^{h,0}_k, I^{h,1}_k, \,  0\leq k < 2K \right\}$ forme une partition du cercle en intervalles. 
On observe que $\ell (\xi,k) = k +\lfloor 2K (\alpha +\beta(\xi,k))\rfloor$ (modulo $2K$) et que si on note  $\gamma^* = \{2K(\alpha+ \beta(\xi,k)) \} / 2K$, on peut écrire 
\begin{eqnarray*}
 I^0_k &=&  [ \frac{k}{2K}-h, \frac{k+1}{2K} -h - \gamma^* ], \\
 I^1_k &=& [\frac{k+1}{2K} -h  -  \gamma^* , \frac{k+1}{2K}-h] , 
 \end{eqnarray*}


Enfin, on note  $\pi(\bfx, \eta, \theta) = x_1/2K$ si $\cos(\theta) \geq 0$  et $\pi(\bfx, \eta, \theta) = (2K-x_1)/2K$ si $\cos(\theta) <0$ modulo $1$, pour la première coordonnée dans la nouvelle paramétrisation.

\subsubsection{Résultat et preuve}
\begin{lemma}
\label{lem:expressionvarphi}
L'application $\varphi$ définie par (\ref{expressionvarphi}) est semi-conjuguée à l'application de premier retour au sens suivant : pour tout $(\bfx, \eta, \theta) \in B^*_{\bfh,\theta_0}$,  
$$ \varphi ( \Psi( \bf x, \eta,\theta) ) = \Psi( \phi_\bfh(\bf x, \eta,\theta) ).$$
\end{lemma}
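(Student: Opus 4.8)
The strategy is to verify the identity $\varphi(\Psi(\bfx,\eta,\theta)) = \Psi(\phi_\bfh(\bfx,\eta,\theta))$ by unwinding both sides into explicit geometric data and checking that they agree case by case, exactly following the trichotomy already described in the proof of Proposition~\ref{prop:conjug}. First I would reduce to the case $H(\eta)=0$: by \eqref{translation} (the vertical-integer translation invariance), both $\Psi$ and $\phi$ intertwine with a shift $\eta \mapsto \eta + (0,-H(\eta))$, $\bfh \mapsto \bfh + H(\eta)$, so it suffices to treat configurations whose lowest brick sits at height $0$, with frontier row $\xi \in \cC_K^*$ and $\bfh \in \rr_+$ arbitrary. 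Write $x = \pi(\bfx,\eta,\theta)$ for the normalized abscissa as in the Notations, and note that the admissibility hypothesis $(\bfx,\eta,\theta)\in B^*_{\bfh,\theta_0}$ together with Lemma~\ref{lem:petitangle} guarantees $\eta$ is equilibrated, so the only relevant combinatorial data are $\xi$, the parity $\tilde k$ of the column where the ball first reaches height $0$, and (in the ``vertical wall'' case) the column $\ell+\epsilon$ where it rebounds off row $1$.

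The core computation is to identify, for a ball launched from $(\bfx,\eta,\theta)$ at the base, the column index $k$ (equivalently $\tilde k$) where it arrives at height $H(\eta)=0$. Since the vertical velocity has constant modulus $\sin\theta_0$ and the horizontal velocity has constant modulus $\cos\theta_0$, between the base (height $-\bfh$) and height $0$ the ball travels a horizontal distance $(\bfh/\tan\theta_0)\cdot\cos\theta_0/\cos\theta_0 = \bfh\cot\theta_0$... more precisely a horizontal displacement of $\bfh/\tan\theta_0$ in the unfolded picture, which after dividing by $2K$ and reducing mod $1$ is exactly the shift $h$ (recall $h = (\bfh)/(2K\tan\theta_0)$ mod $1$ when $H=0$). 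Hence $x+h \bmod 1$ lands in the subinterval $J_k$ of the base corresponding to column $\lfloor 2K(x+h)\rfloor$; this is precisely the combinatorial content of the remark ``$x+h \in J_k$'' recorded right after \eqref{expressionvarphi}, and it pins down $\tilde k$. Then I split into the three cases of \eqref{expressionvarphi}: (a) $\xi(\tilde k)=0$ and $\sum\xi(i) = 1$ — the ball hits the last brick of row $0$ on its underside, destroys it, $H$ increments by $1$ (so $h$ increments by $\alpha$), the new frontier is full ($\xi' = 1$); (b) $\xi(\tilde k)=0$ but $\sum\xi(i) > 1$ — same underside hit, brick destroyed, $H$ unchanged, $\xi' = \xi - \indiq_{\tilde k}$; (c) $\xi(\tilde k)=1$ — the ball enters row $0$ through an empty cell, and since $\theta_0 < \theta^*(K)$ Lemma~\ref{lem:petitangle} says it destroys \emph{all} remaining bricks of row $0$ before touching row $1$; it then rebounds off the underside of some brick of row $1$, $H$ increments by $1$, and the exit column is $\ell+\epsilon$, determined by the extra horizontal travel $2\alpha + \beta(\xi,\tilde k)$ over the round trip through row $0$ and down. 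In each case I must check that the horizontal component of $\Psi(\phi_\bfh(\bfx,\eta,\theta))$ equals $x + 2h$ (cases a,b) or $x + 2h + 2\alpha + \beta(\xi,\tilde k)$ (case c), mod $1$; the ``$2h$'' is the round trip from base to height $0$ and back, and the computation of $\beta(\xi,\tilde k)$ as the net horizontal displacement accrued while sweeping row $0$ is exactly the ``sophisticated'' dependence on $\xi$ alluded to in the sketch of Proposition~\ref{prop:conjug}.

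The main obstacle is case (c): one must show that the round-trip through the full row, regardless of the order in which the $b$ remaining bricks of row $0$ are struck, always produces the \emph{same} net horizontal displacement $2\alpha + \beta(\xi,\tilde k)$ and the claimed exit column, and that the total vertical distance travelled is $2\bfh + 2$ (i.e. $T$ depends only on $\xi$ being of type $U_{++}$, not on the fine combinatorics) — this is the content of the explicit formula for $\beta(\xi,k)$, and verifying it requires carefully tracking the sequence of vertical-wall rebounds: each rebound off the left wall of the brick in column $i$ (for $i$ to the left of the entry column) contributes $+(i+1)/K$ and each off the right wall of column $i$ (for $i$ to the right) contributes $-i/K$ to the horizontal coordinate, mod $1$, and these contributions telescope independently of ordering because the ball's vertical motion is monotone within the slab between heights $0$ and $1$ until the final escape. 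The other, more routine, obstacle is bookkeeping the two competing return times and making sure the ``mod $1$'' reductions in $h$ and in $x$ are handled consistently with the periodicity Lemma~\ref{lem:periode}; this is tedious but mechanical. Once these are in place, comparing the third coordinates ($\xi' $ on each side) is immediate from the case analysis, and the lemma follows.
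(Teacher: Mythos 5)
Your proposal follows essentially the same route as the paper's proof: reduce to $H(\eta)=0$ via the translation/periodicity invariance, track the normalized abscissa $\pi$ which moves at constant speed except for jumps at vertical brick walls, split into the underside-hit case (displacement $2h$, with or without incrementing $h$) and the row-sweeping case (displacement $2h+2\alpha+\beta(\xi,\tilde k)$, exit column $\widetilde{\ell+\epsilon}$), argue that the sum of jumps is independent of the order of impacts, and finish by passing to the quotient via Lemma~\ref{quotient}. One small slip: bricks to the \emph{left} of the entry column are struck on their \emph{right} face (abscissa $i+1$, contribution $(i+1)/K$) and those to the right on their \emph{left} face (abscissa $i$, contribution $-i/K$) --- you have the faces swapped, though the numerical contributions you attach match $\beta(\xi,k)$, so the plan is unaffected.
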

\begin{proof} 
Tout d'abord, observons qu'il suffit de montrer le résultat pour $(\bfx,\eta,\theta)$ avec $H(\eta) = 0$ en vertu du Lemme \ref{lem:periode}, à condition de pouvoir le faire pour toute valeur de $\bfh$ dans $[0, 2K \tan\theta_0]$. 

\vip

\vip

Soit $\bfh \in [0, 2K \tan\theta_0]$. Soient  $\epsilon \in \{0,1\}$, $k \in \{0, 2K-1\}$ et $(\bfx,\eta,\theta) \in B_\bfh$ tels que  $\pi(\bfx,\eta,\theta) \in I^{h,\epsilon}_k$. On note $(x,h,\xi ) = \Psi(\bfx,\eta,\theta)$.  
On va diviser la trajectoire jusqu'au premier retour à la base  en 4 parties distinctes : soient $t_0$ l'instant où la trajectoire (montante) atteint l'ordonnée $0$, $t_1$ l'instant où la trajectoire rebondit sur une paroi horizontale et change de sens et $t_2$  l'instant où la trajectoire passe à l'ordonnée $0$ en descendant. On a toujours $t_0=\bfh/\sin( \theta_0)$ vue la vitesse verticale. Observons tout de suite que si la bille frappe la première brique sur une paroi horizontale, $t_0=t_1=t_2$~; autrement, $t_1= t_0 +  1/ \sin( \theta_0)$ et $t_2= t_1 +  1/ \sin( \theta_0)$.  

Le déplacement horizontal est relativement simple à analyser. En effet, la symetrie introduite fait que hormis lors des rebonds sur des briques, vu dans les nouveaux paramètres,  le déplacement horizontal est toujours proportionnel au temps. La vitesse du flot est donnée par $\cos \theta_t$ et dépend donc de son signe. 
Si on excepte les sauts qui se produisent aux moments des reflexions sur des parois verticales, la  quantité $\pi(\Phi_t(\bfx,\eta,\theta))$ évolue tout au long de la trajectoire à vitesse constante ($\frac{1}{2K} \cos \theta_0$). Mais sur les parois verticales fixes,  $x_1=0$ et $x_1=K$ l'évolution reste continue (par construction !) puisque $\pi((K,y),\theta)=\pi((K,y),-\theta) = 1/2$ alors que $\pi((0,y),\theta) = 0$ et $\pi((0,-\theta) = 1$ sont égaux modulo $1$.  
Pour obtenir le deplacement horizontal pendant l'aller-retour, il restera donc à ajouter les sauts qui se produisent lors des rebonds sur les parois verticales des briques (uniquement en fait entre $t_0$ et $t_1$). A ces instants on passe de $((x_1,y),\eta, \theta)$ à $((x_1,y), \eta',-\theta)$ ; et donc un rebond contre une paroi verticale à l'abscisse $x_1$ produit  un saut de $\pi((x_1,y),\eta, -\theta) - \pi((x_1,y), \eta,\theta) =  1-2 \pi((x_1,y),\eta, \theta)$. Nous y reviendrons.

L'analyse du déplacement horizontal entre les instants $0$ et $t_0$ montre que si $\pi(\bfx,\eta,\theta) \in I^h_k$, alors, $\pi(\Phi^h_{t_0}(\bfx,\eta,\theta)) \in [ k, k+1]/2K$ à l'instant $t_0$ où on atteint la hauteur $0$. On distingue alors les même cas que dans la preuve de la proposition~\ref{prop:conjug} : 
\vip

\noindent
(i) :  si $\xi(\tilde k) =1$, la brique $\square_{(\tilde k,0)}$est occupée et on rebondit sur sa face inferieure avant de revenir à la base. La configuration $\eta$ est changée en $\eta^{\square_{(\tilde k,0)}}$. Le déplacement horizontal ne dépend que de la durée de la trajectoire. Modulo $1$, il est de $\gamma(h) =2 \times  \frac{1}{2K} \times \frac{\bfh}{\sin( \theta_0)} \times \cos(\theta_0) =  2h$ (aller et retour ; pas de rebonds sur des parois verticales non fixes) : 
\begin{eqnarray*}
\pi(\phi(\bfx,\eta,\theta)) &=& \pi(\bfx,\eta,\theta) + 2h = x + 2h \\
\end{eqnarray*}
Si la brique touchée n'était pas la dernière brique à cette hauteur,  i.e., si $\sum_{i=0}^{K-1} \xi(i) > 1$,  la hauteur est inchangée et  la nouvelle configuration est $\xi' = \xi - \indiq_{\{ \tilde k\}}$ : 
\begin{eqnarray*}
H(\phi(\bfx,\eta,\theta)) &=& 0\\
\xi(\phi(\bfx,\eta,\theta)) &= & \xi-\indiq_{\tilde k}.
\end{eqnarray*}
Si, en revanche,  la brique touchée était la dernière brique à cette hauteur,  i.e., si $\sum_{i=0}^{K-1} \xi(i) = 1$, la hauteur est incrémentée d'une unité  et la nouvelle configuration est remplie :  $\xi' \equiv 1$ ; Ainsi, dans ce cas, 
\begin{eqnarray*}
H(\phi(\bfx,\eta,\theta)) &=& 1\\
\xi(\phi(\bfx,\eta,\theta)) &= & \indiq.
\end{eqnarray*}

\vip

\noindent
(ii) : si $\xi(\tilde k) =0$, vue la condition $ \theta_0 < \theta^*(K)$, on va heurter toutes les briques de l'étage $0$ avant d'aller rebondir au niveau $1$. La hauteur sera systématiquement incrémentée de $1$.  Pour déterminer le déplacement horizontal et la configuration finale, il faut préciser ce qui se passe.  Entre $0$ et $t_0$ puis au delà de $t_1$, la trajectoire ne rencontre pas d'obstacles et le déplacement horizontal est proportionnel à la durée. La trajectoire entre $t_0$ et $t_1$, c'est-à-dire la trajectoire montante entre les hauteurs $0$ et $1$ est plus délicate à traiter. Ainsi, on dépasse la hauteur $0$ en entrant dans la case "vide" $\tilde k$.  On va  heurter toutes les briques restant dans la configuration à hauteur $0$, alternativement d'un coté puis de l'autre (jusqu' à ce que l'un des deux cotés soit vide) ; ensuite on heurte alternativement les briques restant d'un coté et le bord vertical opposé. Comme nous l'avons remarqué ci dessus, la projection du déplacement horizontal augmente de manière constante avec le temps {\it excepté au moment des rebonds}.  Aux rebonds avec des faces verticales, la projection du  déplacement fait un saut : si la projection de la coordonnée horizontale de la face verticale heurtée est $x$, on "saute" en $1-x$. Ce qui correspond à un déplacement de $-2x$ (modulo $1$). Pour connaître le déplacement horizontal, il faut donc se donner les abscisses des parois verticales sur lesquelles la trajectoire effectue un rebond. 
Pendant cette partie de la trajectoire, la bille heurte la face de gauche (en arrivant avec une vitesse horizontale  $\cos \theta >0$) des briques $\{ \square_{(i,0)} : \xi(i) =1, i > \tilde k \}$ situées à droite de  $\square_{(\tilde k,0)}$ et les faces de droite (en arrivant avec $\cos \theta <0$) des briques situées à gauche de $\square_{(\tilde k,0)}$. La face de gauche et la face de droite de la brique $(i,0)$ ont respectivement  pour projection $i/2K$ ou $1-i/2K$ et $(i+1)/2K$ ou $1-(i+1)/2K$ (soit $\pm$ modulo $1$). Insistons sur le fait que le déplacement horizontal  dépend du sens dans lequel on rebondit sur la paroi ; mais ce sens est déterminé. En revanche, le déplacement total  ne dépend pas de l'ordre dans lequel ces parois sont touchées. 

Ainsi, entre $t_0$ et $t_1$, le déplacement horizontal est la somme du déplacement continu qui vaut $\alpha = 1/(K \tan \theta)$  pendant la durée $t_1-t_0$ et du déplacement dû aux sauts qui vaut : 
$$ \beta(\xi, \tilde k)=    \sum_{i=0}^{k-1} \xi(i)   2\frac{i+1 }{2K} -  \sum_{i=k+1}^{K-1}  \xi(i)  2 \frac{i}{2K}. 
$$   
Il est maintenant facile de déterminer la brique à hauteur $1$ qui est frappée. En effet, si $\epsilon=0$, alors $\pi(\Phi^h_{t_1}(\bfx,\eta,\theta)) = \pi(\Phi^h_{t_0}(\bfx,\eta,\theta)) + \alpha + \beta(\xi, \tilde k) \in [\ell/2K,(\ell+1)/2K]$ (par définition de $\ell$) de telle sorte que l'abscisse de la bille en $t_1$ est sur la face inferieure de la brique en $\tilde \ell$ ; alors que si $\epsilon=1$, alors $\pi(\Phi^h_{t_1}(\bfx,\eta,\theta)) \in [(\ell+1)/2K,(\ell+2)/2K]$ de telle sorte que l'abscisse de la bille en $t_1$ est sur la face inferieure de la brique en $\widetilde{ \ell +1}$.  Ainsi, dans les deux cas, la brique frappée est la brique $\widetilde{ \ell +\epsilon}$. 

On synthétise cette information sous la forme,  
\begin{eqnarray*}
\pi(\phi(\bfx,\eta,\theta)) &=& x 
+ h + \alpha +  \beta(\xi,\tilde k) +\alpha + h\\
H(\phi(\bfx,\eta,\theta)) &=&1\\
\xi(\phi(\bfx,\eta,\theta)) &= & 1 - \indiq_{\{\widetilde{\ell(\xi,k)+\epsilon\}}}
\end{eqnarray*}

\vip
Il ne reste plus qu'à écrire cela dans les nouveaux paramètres et à vérifier que cela passe bien au quotient ; c'est ce qu'affirme le Lemme \ref{quotient}. 
\end{proof}

\subsubsection{Cas $K=2$} Pour illustrer la forme de cette application,  nous allons l'expliciter complètement lorsque $K=2$. 
Fixons maintenant $K=2$.  Posons  $\gamma(h)=2h$ et $\beta(h) = 2h + 1/2+ 2 \alpha$. Notons $I_g = J_0 \cup J_3$ et $I_d = J_1 \cup J_2$. Il ressort que : 
 \begin{eqnarray*}
\varphi(x,h,(1,1)) &=& \left\{ \begin{array}{lll} (x+\gamma(h), h, (0,1)),  & \hbox{     }  \hbox{     }& \hbox{ si } x+h  \in I_g \\  (x+\gamma(h), h, (1,0)),  && \hbox{ si } x +h \in I_d  \end{array} \right. \\
\varphi(x,h,(0,1)) &=&  \left\{ \begin{array}{ll}  (x+\beta(h), h+\alpha, (0,1) ) & \hbox{ si } x+h \in I_g \hbox{ et } x+1/2 + h + \alpha \in I_g  \\
(x+\beta(h), h+\alpha, (1,0) ) & \hbox{ si } x +h \in I_g  \hbox{ et } x+1/2+h+\alpha \in I_d \\
(x+\gamma(h), h+\alpha, (1,1)),  & \hbox{ si } x+h \in I_d  \end{array} \right. \\
\varphi(x,h,(1,0)) &=& \left\{ \begin{array}{ll} (x+\gamma(h), h+\alpha, (1,1)),  & \hbox{ si } x +h \in I_g \\   
(x+\beta(h), h+\alpha, (0,1) ) & \hbox{ si } x+h \in I_d \hbox{ et } x+1/2 +h + \alpha  \in I_g  \\
(x+\beta(h), h+\alpha, (1,0) ) & \hbox{ si } x +h \in I_d  \hbox{ et } x+1/2+h+\alpha \in I_d  \end{array} \right. 
\end{eqnarray*}
Observons aussi que, lorsque $K=2$, l'ensemble $A_+ \cup A_{++}$ s'écrit : 
$$ A_+ \cup A_{++} =  \tore  \times \{ (1,0), (0,1)\}.$$

\section{Domaine $\rr^2$}
\label{sec:zdeux}
Nous allons  nous intéresser au casse-briques $\Phi$ dans $\Delta = \rr^2$ muni de la famille d'obstacles $\cO = \{ \carre_{{\bf z}}, {\bf z} \in \zz^2 \setminus \{0,0\} \}$.  
L'ensemble des positions initiales admissibles est naturellement  le carré $[0,1]^2$. Les résultats de la section~\ref{sec:definition} s'applique naturellement à ce contexte. Comme précédemment, commençons par observer que : 
\begin{itemize}
\item Rappelons que l'ensemble des configurations bien remplies est stable~: en effet, le nombre de briques cassées par unité de temps est fini.  Observons  que l'ensemble des configurations rendant le domaine connexe est stable lui aussi. 
\item Pour les même raisons que dans le domaine restreint, partant d'un état initial $(\bfx_0,\eta_0,\theta_0)$ régulier,  l'angle reste dans l'ensemble $\{\theta_0, \pi-\theta_0,-\theta_0,\pi+\theta_0\}$. 
\item Mentionnons les symétries évidentes de ce système par rapport aux axes de coordonnées et par rapport aux diagonales. 
\item Observons enfin que si au lieu de briques carrées $1 \times 1$ on avait pris des briques rectangulaires $L \times \ell$, on pourrait se ramener au système avec briques carrées en changeant l'angle initial de telle manière que $L \tan{\theta} = \ell \tan{\theta'}$.  
\end{itemize}

\subsection{Un facteur}
Modulo $\zz^2$, ce système est proche d'un billard carré. Pour préciser ce point, nous allons   faire un quotient sur le tore (en oubliant configuration et direction de la vitesse). 
Considérons l'application $\zeta : \adm \to \tore \times \cercle$ définie par $\zeta(\bfx,\theta, \eta) = (\{\bfx\},v(\theta))$ (où $\{\bfx\} = (\{x_1\},\{x_2\})$ désigne le vecteur composé des parties fractionnaires des coordonnées) et $\overline{\zeta}$ qui à $(\bfx, \theta, \eta)$ associe la classe de $\zeta(\bfx, \theta, \eta)$ dans le quotient par les deux  relations :  $((x_1,x_2),(v_1,v_2))\equiv ((x_1,1-x_2), (v_1,-v_2))$ et $((x_1,x_2),(v_1,v_2))\equiv ((1-x_1,x_2), (-v_1,v_2))$. Disons que $\overline{\zeta}(\bfx, \theta, \eta)$ est le seul représentant avec les coordonnées de vitesse positives (et on oublie la vitesse qui ne "change" pas). C'est un point dans le tore avec une vitesse positive. L'observation est que pour tout $t>0$, 
$$\overline{\zeta} (\Phi_t(\bfx, \theta, \eta)) = \overline{\zeta} (\bfx, \theta, \eta) + t (|v_1|,|v_2|), \, \hbox{ modulo } \tore,$$ 
c'est-à-dire que $\Phi_t$ est semi-conjugué au flot de direction $(|v_1|,|v_2|)$ sur le tore $\tore$. C'est trivial pour les mouvements dans chaque case. Au bord des cases, ce qui arrive au flot dépend de la configuration dans la case adjacente, mais le quotient prend en compte tous les cas de figures qui peuvent se produire (traversée, rebond sur une arête horizontale, rebond sur une arête verticale) si bien que l'évolution ne dépend pas de la configuration. 

\vip
Lorsque la tangente de l'angle est rationnelle, ce système dynamique est conjugué à un automate cellulaire ; en effet, l'ensemble des angles reste fini, mais,  en vertu du point précédent, l'ensembles des positions  admissibles au bord d'une case est alors lui aussi fini. Un état est donc décrit par une configuration et une donnée finie. En outre, il est possible, à partir de ces éléments de déterminer l'état du système (au prochain instant où il atteint le bord d'une case) localement, i.e. en ne regardant la configuration que dans un voisinage de la case occupée. Cette remarque ne nous ayant pas permis d'obtenir de résultats, nous ne la développerons pas plus avant. 

\subsection{Périodicité relative}
On cherche ici à caractériser des comportements particulièrement simples. On ne peut espérer trouver d'orbites périodiques (puisque le nombre de briques ne fait que décroitre). Le cas qui semble le plus simple est celui d'un angle $\theta_0 = \pi/2$ (ou $\theta_0=0$). Dans ce cas, la bille "creuse" alternativement une brique vers le haut et une vers le bas, formant une bande verticale de largeur $1$ à l'abscisse initiale. Si on part d'une configuration non triviale, il reste une tache au depart mais cela ne change rien au comportement asymptotique. Notre idée est de definir ce que seraient des comportements pratiquement aussi simples dans d'autres directions. 
Nous proposons pour aborder ce problème quelques définition inspirées par ce cas trivial.

\vip

Commençons par un peu de formalisme. Partant d'un état $(\bfx_0, \eta_0, \theta_0)$,  nous noterons $\eta_n$ la configuration après le $n^{{ieme}}$ rebond et $\Delta_n = \Delta^{\eta_n}$ la forme de l'ensemble des briques enlevées à ce moment. Il est clair ici que si on part d'une configuration bien remplie, le nombre de brique décroit indéfiniment. On note $\Delta_\infty = \lim_{n \to \infty} \Delta_n$. On appelle {\em bande}  (respectivement {\em demie bande}) de pente $\alpha$, d'origine $\bfx$, de largeur $\delta$,  l'ensemble des points à distance  inferieure à $\delta$ de la  droite (resp. demie droite) partant de $\bfx$ et  formant un angle  $\alpha$ avec l'horizontale.  

\begin{definition}
Une orbite du casse-briques est dite {\em directionnelle} si $\Delta_\infty$ est contenue dans une bande. 
\end{definition}
On peut être tenté de penser que la direction de la droite est soit $\theta_0$, soit $\pi/2- \theta_0$. C'est faux. Nous allons voir qu'il existe un cas où $\theta_0 = \arctan{3}$ et où la bande est de direction  $\alpha = 0$~! Mais dans ce cas, on peut inclure la forme limite dans une demie bande. Si la forme limite contient elle même une bande, cela implique qu'on fasse des allers et retours dans la bande et alors, les seules directions possibles sont effectivement des directions prises par la bille : $\theta_0$ et $\pi/2 - \theta_0$. Mais ce phénomène  suggère les définitions suivantes : 
\begin{definition}
On dit qu'une orbite du casse-briques {\em creuse dans la direction $\alpha$} si il existe une demie bande de pente $\alpha$ contenue dans $\Delta_\infty$. 
\end{definition}
\begin{definition}
On dit qu'elle {\em ne fait que creuser} s'il existe un nombre fini de demie bandes  (de pentes $\alpha_1, \ldots, \alpha_K$) qui recouvrent la forme limite. 
\end{definition}
On va montrer  dans ce cas, on peut se contenter de prendre $K=1$ ou $K=2$ et que si $K=2$, alors $\alpha_2 = \alpha_1+\pi$ et l'orbite est directionelle. 
\begin{lemma}
Si une orbite ne fait que creuser, alors on peut recouvrir la forme limite avec une demie bande ou une bande ; autrement dit, $K=1$ ou $K=2$ suffisent.  
\end{lemma}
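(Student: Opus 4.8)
Le plan est le suivant. On observe d'abord que $\Delta_\infty$ est \emph{connexe} --- la propri\'et\'e de connexit\'e de $\Delta^\eta$ est pr\'eserv\'ee par le semi-flot, et dans la situation naturelle on part d'un domaine connexe (par exemple $\eta_0$ pleine, $\Delta^{\eta_0}=[0,1]^2$) --- et \emph{non born\'e}, puisque le nombre de briques d\'etruites est infini alors que toute boule n'en contient qu'un nombre fini. On part d'un recouvrement $\Delta_\infty\subset\bigcup_{i=1}^K D_i$ par des demi-bandes et on le rend \emph{minimal}~: on supprime les demi-bandes inutiles, et comme l'union de deux demi-bandes parall\`eles de m\^eme sens est contenue dans une seule demi-bande, on peut supposer que les directions $u_1,\dots,u_K$ sont deux \`a deux distinctes.

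On \'etudie ensuite les bouts de $\Delta_\infty$. Deux bandes pleines de directions distinctes se coupent selon un parall\'elogramme born\'e, et deux demi-bandes de directions oppos\'ees se coupent selon un ensemble born\'e~; on peut donc fixer $R>0$ assez grand pour que le carr\'e $[0,1]^2$, tous les points-base des $D_i$ et toutes ces intersections soient dans $\{|\bfx|\le R\}$, auquel cas les queues $D_i\cap\{|\bfx|\ge R\}$ sont deux \`a deux disjointes. Quitte \`a agrandir $R$, on peut aussi supposer que chaque ensemble $E_i:=\Delta_\infty\cap D_i\cap\{|\bfx|\ge R\}$ est vide ou non born\'e. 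Par connexit\'e, $\Delta_\infty\cap\{|\bfx|\ge R\}$ se partitionne en les morceaux $E_i$, chaque $E_i$ non vide restant rattach\'e \`a la partie born\'ee $\Delta_\infty\cap\{|\bfx|\le R\}$ mais s\'epar\'e des autres queues.

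Le point cl\'e est de montrer qu'\emph{au plus deux} des $E_i$ sont non vides et que, s'ils sont deux, $u_2=-u_1$. On utilise ici la dynamique. Soit $T_0$ un temps apr\`es lequel toutes les briques de $\Delta_\infty$ contenues dans $\{|\bfx|\le R\}$ ont \'et\'e d\'etruites (il n'y en a qu'un nombre fini, et chacune dispara\^it d\`es que la bille entre dans sa case, ce qui a lieu en temps fini puisque les cases de $\Delta_\infty$ sont exactement celles visit\'ees). Apr\`es $T_0$, chaque passage de la trajectoire dans $\{|\bfx|\le R\}$ est rectiligne (plus aucun obstacle, et $\Delta=\rr^2$), de direction dans l'ensemble \`a quatre \'el\'ements $\{\theta_0,\pi-\theta_0,-\theta_0,\pi+\theta_0\}$. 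Comme chaque $E_i$ non vide est non born\'e et contient des briques d\'etruites arbitrairement loin, la bille y p\'en\`etre arbitrairement profond\'ement, et entre deux incursions profondes dans deux queues distinctes elle traverse la partie born\'ee par une telle corde~; une corde reliant le voisinage de la zone de passage de $E_i$ \`a celle de $E_j$ pointe, \`a une erreur en $O(1/R)$ pr\`es, dans la direction de $u_j-u_i$, donc pour $R$ assez grand cette direction appartient \`a l'ensemble des quatre directions admissibles. Une analyse plus fine de la mani\`ere dont la bille ressort d'une incursion arbitrairement profonde pour revenir au centre --- utilisant qu'entre deux rebonds la vitesse est l'un de quatre vecteurs fix\'es, et qu'une brique d\'etruite ne r\'eappara\^it jamais --- exclut alors trois directions ou plus, ainsi que les paires non oppos\'ees.

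La conclusion est imm\'ediate~: la partie centrale \'etant born\'ee, elle tient dans la demi-boule de base de n'importe quelle demi-bande~; s'il y a au plus une queue non vide, $\Delta_\infty$ est contenu dans une seule demi-bande (direction $u_1$, assez large, base recul\'ee pour englober le centre)~; s'il y en a deux, avec $u_2=-u_1$, alors $\Delta_\infty$ est \`a distance born\'ee d'une droite de direction $u_1$, c'est-\`a-dire dans une bande. Le principal obstacle est clairement l'\'etape centrale~: purement g\'eom\'etriquement un ensemble connexe peut \^etre un "Y" fin n\'ecessitant trois demi-bandes, et il faut v\'eritablement exploiter la dynamique (les quatre directions admissibles, l'irr\'eversibilit\'e de la destruction des briques, le d\'eplacement rectiligne dans les zones vid\'ees) pour emp\^echer l'orbite de creuser dans trois directions essentiellement diff\'erentes.
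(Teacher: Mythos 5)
Your overall architecture is the same as the paper's: cut $\Delta_\infty$ along a large disk so that the tails of the half-bands become disjoint, note that once all bricks of $\Delta_\infty$ inside that disk have been destroyed every crossing of the central region is a single straight segment with direction in $\{\theta_0,\pi-\theta_0,-\theta_0,\pi+\theta_0\}$, and try to conclude that at most two tails, necessarily opposite, survive. (The connectedness of $\Delta_\infty$ you invoke at the outset is not actually needed and plays no role in the paper's argument; the dichotomy ``the ball eventually gets lost in one tail'' versus ``it keeps returning'' is what handles the $K=1$ case.)

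The difficulty is that the decisive step is precisely the one you do not carry out. The only quantitative constraint you derive --- that the chord joining the mouth of tail $i$ to the mouth of tail $j$ points, up to $O(1/R)$, in the direction of $u_j-u_i$, and that this must be one of the four admissible directions --- is not the relevant constraint and is too weak: it neither rules out three tails nor forces opposite pairs, and you concede this by deferring to an ``analyse plus fine'' that is never supplied, i.e.\ to the entire content of the lemma. The missing ingredient is metric rather than combinatorial: a trajectory segment that travels a distance $R_2$ inside a half-band of width $\delta$ without meeting a brick must have direction within $O(\delta/R_2)$ of the axis of that half-band. Consequently the ball returning from an arbitrarily deep incursion into tail $i$ crosses the central disk with direction arbitrarily close to $-u_i$; since that same straight crossing must then carry it arbitrarily far into some other tail $j$, its direction is also arbitrarily close to $+u_j$; letting the depth tend to infinity forces $u_j=-u_i$ exactly. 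Because the minimal cover has pairwise distinct directions, each tail has a unique partner and no third direction can occur, whence $K\le 2$. Without this parallelism argument --- which is the heart of the paper's proof --- your proof does not close.
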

\begin{proof}
Imaginons que la forme limite soit couverte par un nombre fini de demies bandes. Choisissons $R_1$ assez grand pour que les parties des demies bandes hors du disque $D_1$ de rayon $R_1$ soient disjointes et notons $\alpha_0$ la plus petite distance angulaire entre deux demies bandes. S'il finit par arriver que la bille se perde dans l'une des demies bandes, alors, on aurait pu recouvrir la forme limite avec une seule demie bande. Dans l'alternative,  considérons  une demie  bande dans laquelle la bille s'aventure arbitrairement loin et revient dans le disque $D_1$ ; après un certain temps, elle ne rencontre plus de briques dans $D_1$ et donc ne change pas de direction en le traversant.  Fixons $\epsilon$ positif. Choisissons $R_2$ assez grand par rapport à la largeur de la bande pour être sûrs que si la bille arrive depuis une distance $R_2$ (sans toucher de brique) au "bout" de la demie bande $k$, son angle en traversant  le disque de rayon $R_1$ soit assez proche de $\alpha_k$ pour qu'on soit sûr que son point d'impact de l'autre côté de $D_1$  soit contrôlé à $\epsilon$ près. Si $\epsilon$ est assez petit par rapport à  $\alpha_0$, la bille ne peut alors entrer que dans l'une des autres demies bandes. Comme la bille doit alors atteindre le "bout" ce cette demie bande, l'angle de celle-ci doit être arbitrairement proche de $\alpha_k + \pi$. En prenant $\epsilon$ arbitrairement petit, on conclut que seules deux demies bandes opposées suffisent à couvrir la forme limite. 
\end{proof}
Pour chaque demie bande de pente $\alpha$  dans laquelle l'orbite creuse, on considère la suite $(\bfz^\alpha_n)$ des briques détruites dans cette demie bande et la suite de différences $\partial \bfz^\alpha_n = \bfz^\alpha_n - \bfz^\alpha_{n-1}$.
\begin{definition}
L'orbite est dite {\em relativement périodique} si les suites $(\partial \bfz_n^{\alpha_k})$ sont ultimement périodiques pour tous $k=1,\ldots, K$.   
\end{definition}
Nous allons exhiber des orbites relativement périodiques dans les cas : $\tan{\theta_0} = 1$, $\tan{\theta_0} = 2$ et $\tan{\theta_0} = 3$. Dans les trois cas, ces orbites sont directionelles, de pentes soit $\alpha = \theta_0$, soit $\alpha = \pi/2 - \theta_0$. Dans le dernier cas, $\theta_0 = \arctan{3}$, nous trouvons aussi une orbite qui creuse  dans la direction $\alpha = 0$ et uniquement dans cette direction (i.e. $K = 1$ ; nous disons qu'elle est {\em auto-creusante}). Ces exemples sont illustrés par les figures~\ref{fig:atan1}-\ref{fig:atan3_2} avec différentes conditions initiales. Nous soupçonnons qu'il existe  des orbites relativement périodiques pour beaucoup d'autres directions  mais n'avons su en mettre une seule autre en évidence ; par exemple même pas pour $\arctan{4}$ ! Ce phénomène nous paraît frappant ; il est probablement dû à notre incompétence. 

\vip

Nous n'avons pas non plus su montrer le résultat suivant qui paraît au premier abord relativement intuitif, mais qui pourrait soulever des difficultés comparables à celles posées par la conjecture de la fourmi de Langton~\cite{fou}~:

\begin{conjecture}
Quelle que soit la condition initiale  $(\bfx_0, \eta_0, \theta_0)$, si  $\eta_0$ est bien remplie et si $\tan{\theta_0} = 1$, alors son orbite est relativement périodique. 
\end{conjecture}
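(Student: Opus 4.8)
\vip
\noindent\emph{Esquisse d'une approche possible.} Le caract\`ere diagonal de la direction ($\tan\theta_0=1$) invite \`a exploiter pleinement la rationalit\'e. Je partirais du facteur $\overline{\zeta}$ introduit ci-dessus : le flot induit sur le tore $\tore$ \'etant de direction $(1,1)$, il est p\'eriodique (toutes ses orbites sont des g\'eod\'esiques ferm\'ees de m\^eme longueur), de sorte que $\overline{\zeta}(\Phi_t(\bfx_0,\eta_0,\theta_0))$ est p\'eriodique en $t$ ; au voisinage de chaque bord de case rencontr\'e, l'\'etat g\'eom\'etrique local --- position modulo $\zz^2$ et direction, \`a sym\'etrie pr\`es --- est donc l'un d'un nombre fini, et la suite de ces \'etats se r\'ep\`ete p\'eriodiquement. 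Ce facteur ne dit toutefois rien de la \emph{case} r\'eellement atteinte ni de la configuration : c'est l\`a que r\'eside le contenu. J'ajouterais deux faits \'el\'ementaires : \`a angle rationnel fix\'e, l'application de premier retour du semi-flot sur la r\'eunion des bords de cases est une isom\'etrie par morceaux \`a nombre fini de morceaux ; et, dans toute boule born\'ee, le nombre de briques se stabilise, si bien que pass\'e un temps fini la bille traverse cette boule sans rien modifier, donc sans d\'evier.

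\vip
Le c\oe ur de la preuve serait d'\'etablir que l'orbite est \emph{directionnelle}, autrement dit que $\Delta_\infty$ est contenue dans une bande --- laquelle, d\`es que l'on sait que l'orbite "ne fait que creuser", est de pente $\theta_0$ ou $\pi/2-\theta_0$ par le lemme \'etabli ci-dessus. L'id\'ee serait de montrer qu'apr\`es un transitoire fini la bille entre dans un r\'egime de "creusement de couloir" : j'analyserais un pas \'el\'ementaire de creusement --- la bille arrive \`a la fronti\`ere de la zone d\'ej\`a vid\'ee, y d\'etruit une ou deux briques, rebondit, repart vers la zone vide --- et je montrerais, en contr\^olant l'\'etat $(\{\bfx\},\theta)$ modulo $\zz^2$ gr\^ace \`a la p\'eriodicit\'e torique, que la fronti\`ere de $\Delta_n$ progresse d'une quantit\'e born\'ee dans une direction fixe. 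C'est ici, me semble-t-il, que se situe l'obstacle principal : il faut exclure que la r\'egion vid\'ee s'\'etale de fa\c{c}on complexe, comme le laissent craindre les figures~\ref{fig:zdeux1} et~\ref{fig:zdeux2} pour d'autres angles, et aucun des arguments \emph{locaux} disponibles ne fournit ce contr\^ole \emph{global} ; c'est exactement la difficult\'e de type "fourmi de Langton" \'evoqu\'ee.

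\vip
En admettant la directionnalit\'e, la p\'eriodicit\'e relative devrait d\'ecouler d'un argument de renormalisation, ou d'automate fini. Une fois la bille suffisamment enfonc\'ee dans la bande, ce qu'elle "voit" au voisinage de la fronti\`ere de creusement est l'un d'un nombre fini de motifs locaux ; comme la dynamique ne d\'epend que de la configuration dans un voisinage du point, le couple (motif de fronti\`ere, \'etat $(\{\bfx\},\theta)$ modulo $\zz^2$) \'evolue de fa\c{c}on d\'eterministe dans un ensemble fini, donc ultimement p\'eriodiquement. En projetant sur la suite $(\bfz_n^{\alpha})$ des briques d\'etruites dans la bande, on en d\'eduirait que les diff\'erences $\partial\bfz_n^{\alpha}$ sont ultimement p\'eriodiques, soit la conclusion recherch\'ee.

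\vip
Resterait enfin le point d\'elicat de l'\emph{uniformit\'e en $\eta_0$} : l'\'enonc\'e concerne \emph{toute} configuration bien remplie, or un d\'efaut initial fini (un amas de briques d\'ej\`a \^ot\'ees) peut perturber le transitoire de mani\`ere a priori incontr\^ol\'ee, sans borne \'evidente sur le temps au bout duquel le syst\`eme "gu\'erit" et rejoint le r\'egime de couloir. Il faudrait soit montrer directement que ce d\'efaut est absorb\'e en temps fini --- ce qui ram\`enerait aux configurations particuli\`eres que l'on sait d\'ej\`a traiter ---, soit exhiber un invariant combinatoire monotone le long de l'orbite for\c{c}ant la stabilisation. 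C'est, \`a notre sens, le v\'eritable point dur, et ce qui justifie de laisser cet \'enonc\'e au rang de conjecture.
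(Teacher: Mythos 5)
Cet énoncé est une \emph{conjecture} dans l'article : les auteurs déclarent explicitement ne pas savoir le démontrer et comparent la difficulté à celle de la fourmi de Langton. Il n'y a donc aucune preuve dans le papier à laquelle confronter votre texte, et votre proposition n'en est pas une non plus --- vous le reconnaissez d'ailleurs honnêtement. Votre esquisse est cohérente avec le diagnostic du papier : le facteur torique $\overline{\zeta}$ est bien périodique pour $\tan\theta_0=1$, l'application de premier retour sur les bords de cases est bien une isométrie par morceaux à angle rationnel, et la stabilisation de la configuration dans toute boule bornée est correcte (le nombre de briques y est décroissant et minoré). Mais aucun de ces faits ne fournit le contrôle \emph{global} requis, et c'est précisément là que tout se joue.

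Les deux chaînons manquants que vous identifiez sont réels et non contournés : (1) la \emph{directionnalité} de l'orbite --- rien n'exclut a priori que $\Delta_\infty$ s'étale de façon non confinée à une bande, et le facteur torique, qui oublie la case et la configuration, est structurellement incapable de le détecter ; (2) même en admettant la directionnalité, votre argument d'automate fini suppose que l'ensemble des ``motifs de frontière'' visibles par la bille est fini, ce qui exige de borner a priori la complexité de la frontière de creusement --- c'est une hypothèse de même nature que la conclusion, pas un lemme acquis. Enfin le point sur l'uniformité en $\eta_0$ (absorption d'un défaut initial fini) est lui aussi ouvert. En résumé : votre texte est une feuille de route plausible et lucide sur ses propres lacunes, pas une démonstration ; il laisse l'énoncé exactement là où le papier le laisse, au rang de conjecture.
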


Dans le cas relativement périodique, il est possible de définir un (ou plusieurs) {\em motifs} tels que, en dehors d'une boule finie la configuration limite $\Delta_\infty$ soit exactement une réunion de translatés de ces motifs. Plus précisemment, pour chaque direction $\alpha$, on définit la période $p^\alpha$ de $\partial \bfz^\alpha$ et sa pré-période $n_0^\alpha$, le vecteur ${\bf v}^\alpha = \bfz^\alpha_{n_0^\alpha+p^\alpha} - \bfz^\alpha_{n^\alpha_0} = \sum_{i=n_0}^{n_0^\alpha+ p^\alpha} \partial \bfz^\alpha_i$  et le motif $P^\alpha = \bigcup_{i=n^\alpha_0}^{n^\alpha_0+ p^\alpha} \square_{\bfz^\alpha_i}$. Alors 
$$\Delta_\infty \setminus \left(   \bigcup_{k=1}^{K} \bigcup_{n \geq 0} P^{\alpha_k} + n {\bf v}^{\alpha_k} \right) $$
est un ensemble fini. Cela entraine naturellement que les directions $\alpha_k$ (qui est aussi la direction de ${\bf v}^{\alpha_k}$) sont toutes rationnelles. Nous soupçonnons que, plus généralement,  si l'orbite ne fait que creuser, alors la direction doit être rationnelle ; mais nous n'avons pas d'argument convaincant en fait. 

\vip

Il est aussi assez facile de voir que dans le cas $K=2$,  la direction est nécessairement celle de l'angle parce qu'on fait des trajets de plus en plus long sans toucher de brique sur une largeur finie. Mais rien ne garantit que les seuls motifs possibles sont ceux que nous avons observé. On peut se demander en particulier s'il existe des orbites auto-creusantes pour $\theta =\arctan{1}$ et $\theta = \arctan{2}$ ? Toute orbite directionelle est-elle relativement périodique ? Tout angle de tangente rationnelle donne-t-il lieu à des orbites relativement périodique ? Si c'est le cas, nos expérimentations semblent indiquer que la prépériode période peut être très longue.

\subsection{Exemples}
\label{sec:exemples}

\begin{figure}
\includegraphics[width=65mm]{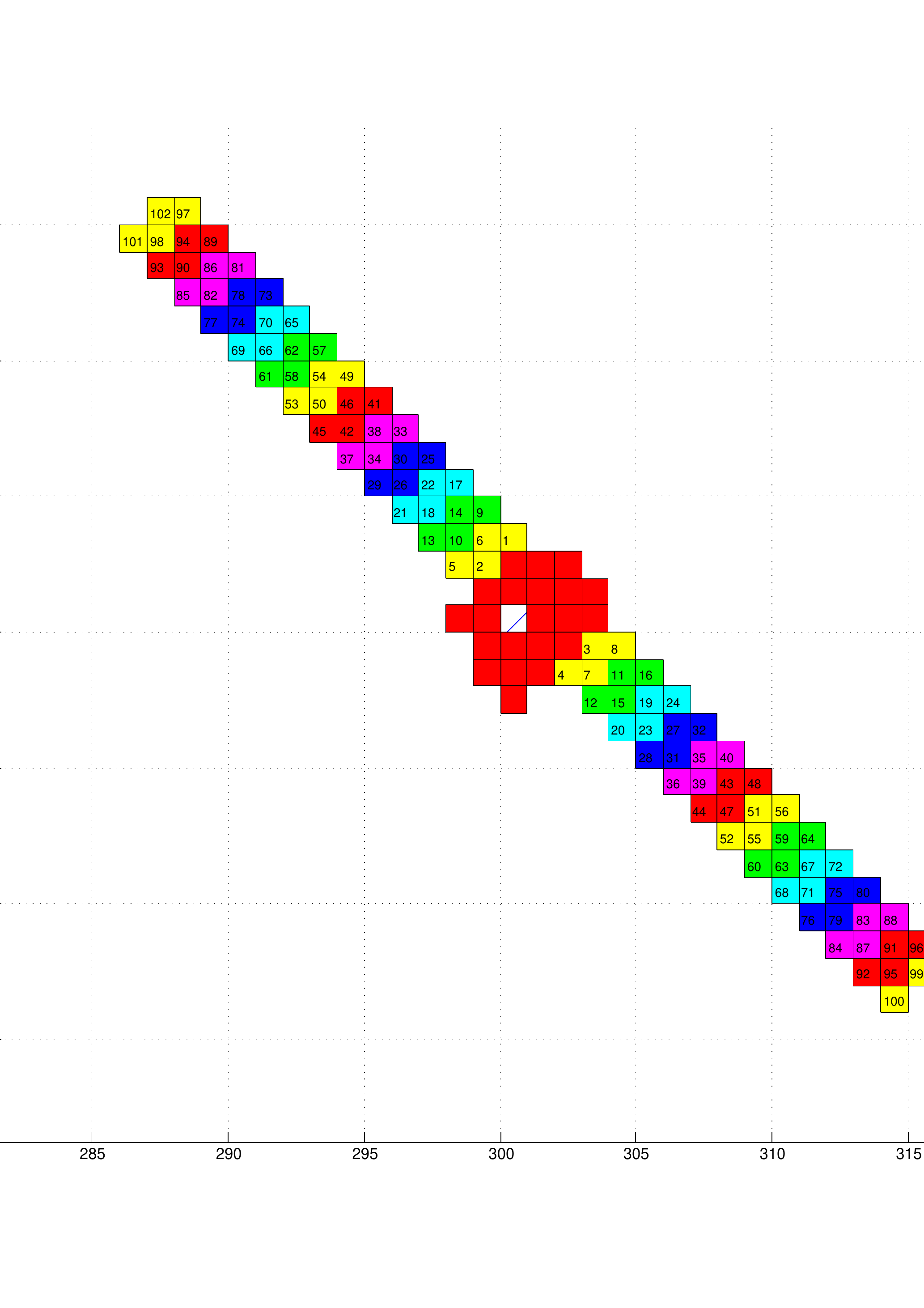}
\hspace{.2cm}
\includegraphics[width=65mm]{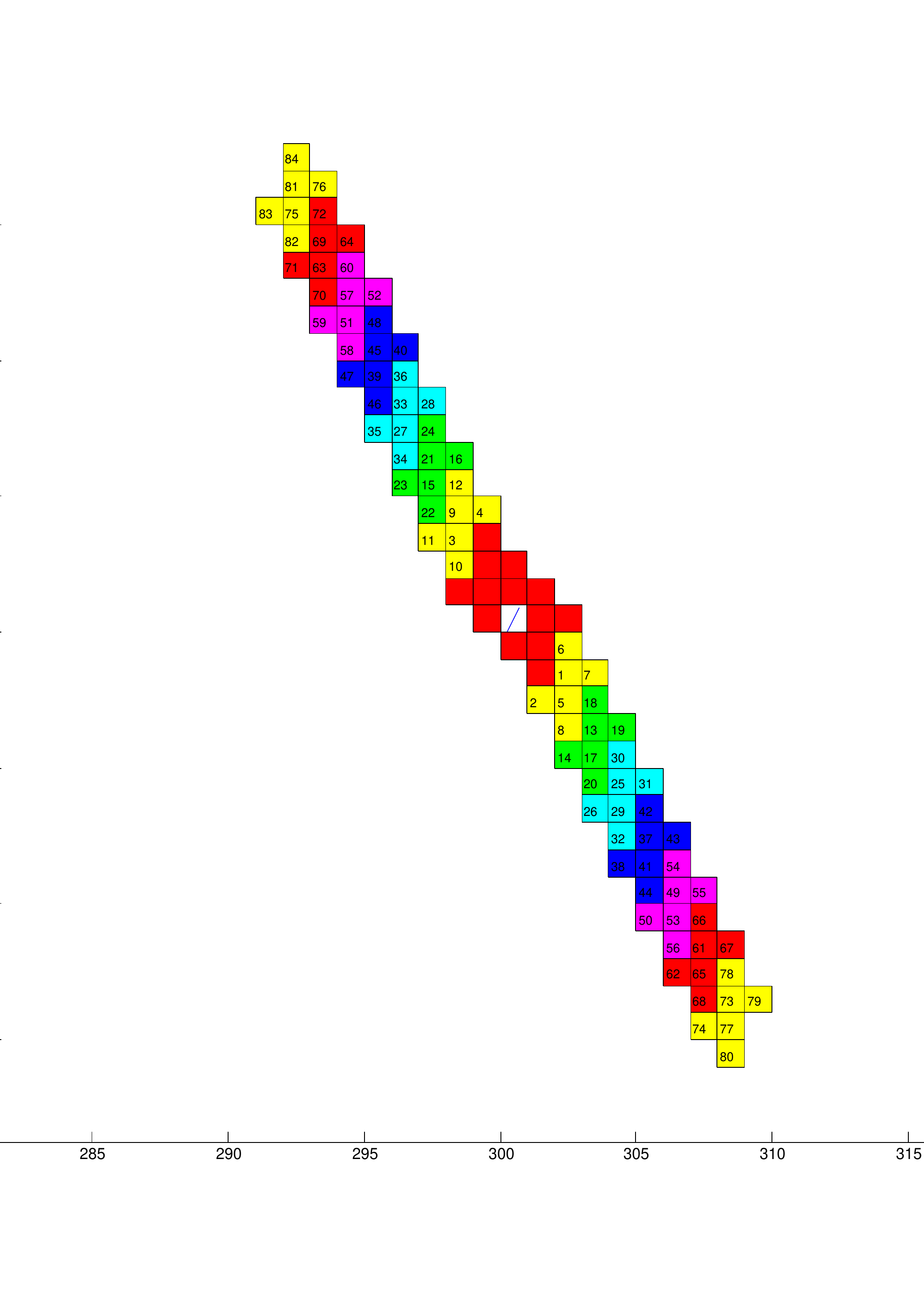}
  \caption{ $\tan \theta_0 = 1$ et $\tan \theta_0 = 2$.}
  \label{fig:atan1}
\end{figure}
\begin{figure}
\includegraphics[width=65mm]{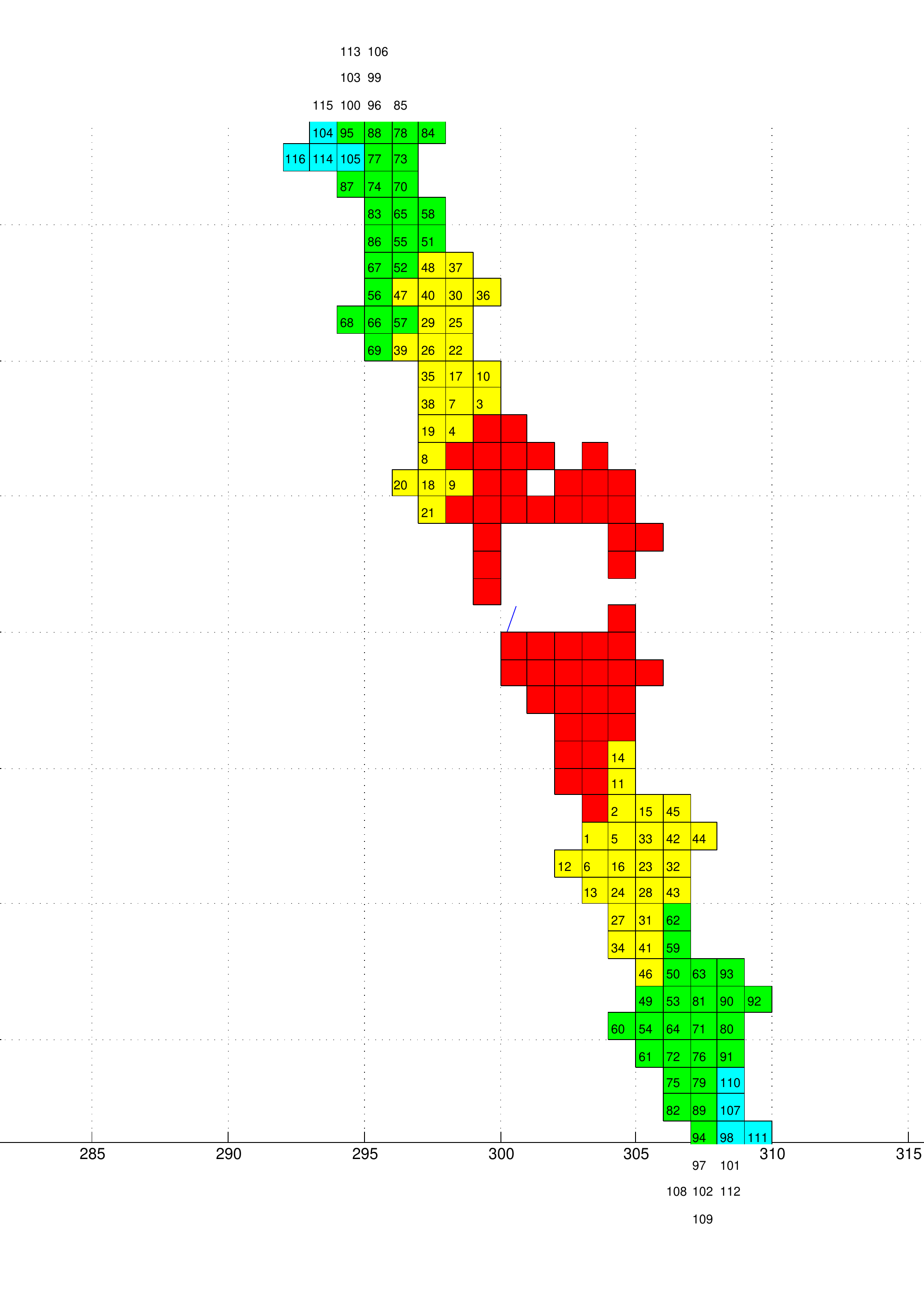}
\hspace{.2cm}
\includegraphics[width=65mm]{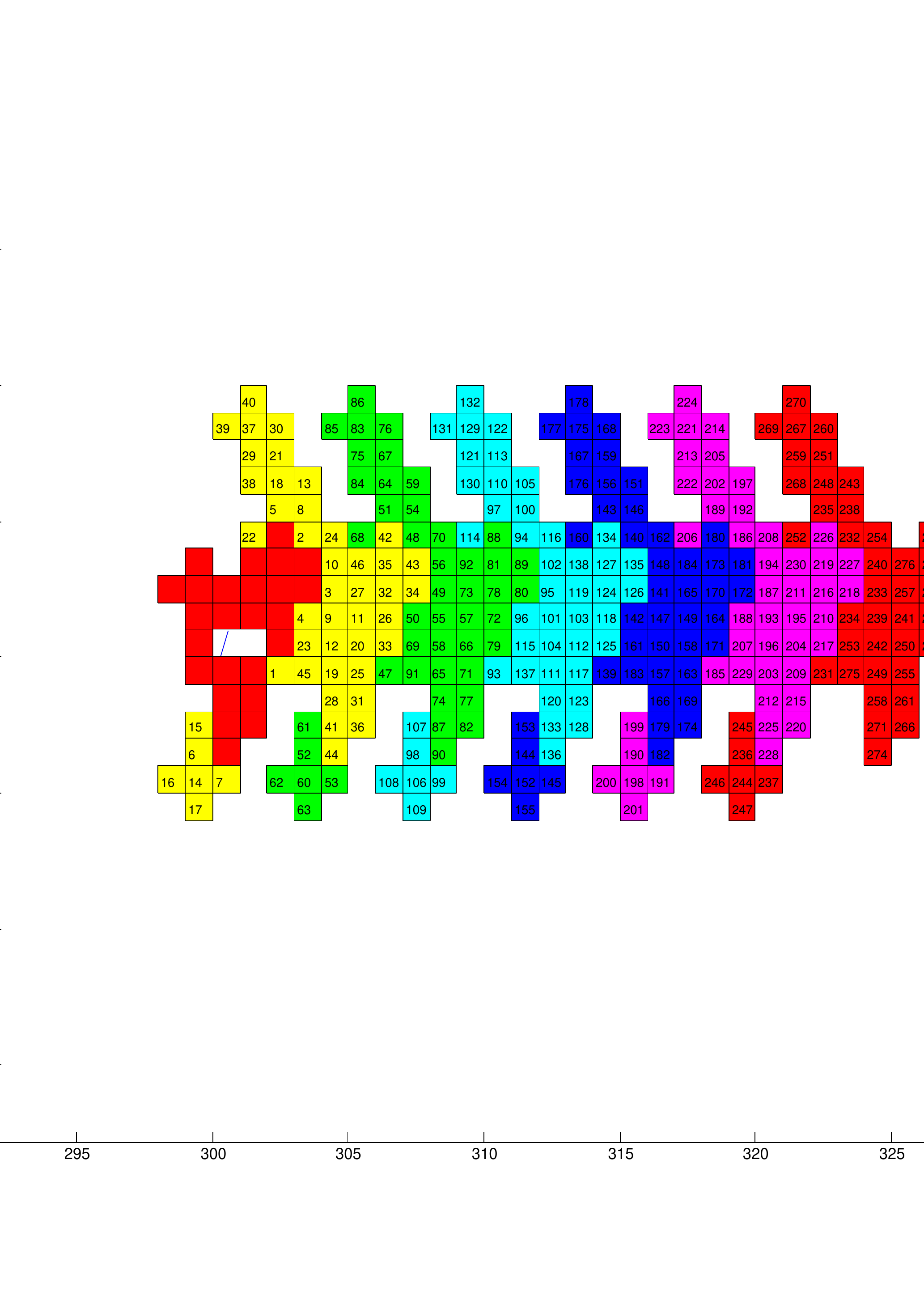}
  \caption{ $\tan \theta_0 = 3$.}
  \label{fig:atan3_2}
\end{figure}

Nous décrivons ici les seuls exemples d'orbites relativement périodiques que nous avons pu observer. Elles sont décrites par des figures ; la numérotation des briques permet de reconstituer les trajectoires une fois le régime périodique atteint. Observons qu'il y a une part d'arbitraire dans le choix du début de la période qui détermine la forme des motifs. 
$$
\begin{array}{|c|c|c|}
\hline
\hbox{Angle} & \hbox{Branches} & \hbox{Périodes} \\
\hline
\tan{\theta} = 1 & 2 & 4,4\\
\hline
\tan{\theta} = 2 & 2 & 6,6\\
\hline
\tan{\theta} = 3 & 2 & 24, 24\\
\hline
\tan{\theta} = 3 & 1 &46 \\
\hline
\end{array}
$$

\vip 

Dans les figures présentées, les configurations initiales sont choisies de manière à ce que les prépériodes ne soient pas trop longues. Mais nos expérimentations n'ont montré que ces exemples et leurs variantes symétriques pour toutes les configurations initiales essayées. Nous n'avons en revanche pas trouvé d'autres angles produisant des orbites relativement périodiques malgré de nombreuses et longues tentatives, par exemple avec $\theta = \arctan{4}$. Il semble que Yann Jullian [communication personnelle] ait mis en évidence  de nouvelles orbites relativement périodiques. 


%
%
%


\def\refname{Références}


\begin{thebibliography}{99}



\bibitem{bos1}{Boshernitzan M.  and Kornfeld I., {\it Interval translation mappings.} Ergodic Theory and Dynamical Systems 15:05 (1995), pp 821--832.}

\bibitem{kea1}{Keane M., {\it Non-ergodic interval exchange transformations.} Israel Journal of Mathematics,  26:2 (1977), pp 188--196}

\bibitem{fou}{Gajardo, A., Moreira, A. and Goles, E., {\it Complexity of Langton's ant.} Discrete Applied Mathematics,  117:1-3 (2002), pp 41--50}


\bibitem{tab1}{Tabachnikov S.,  {\it Billiards.} 
SMF Panoramas et Syntheses, {\bf 1} (1995)}




\end{thebibliography}
\end{document}